\documentclass[11pt,reqno]{amsart}

\usepackage{enumerate}
\usepackage{amsmath, amssymb, amsthm}
\usepackage{mathrsfs}
\usepackage{esint}
\usepackage{xcolor}
\usepackage{mathtools}
\usepackage{hyperref}
\usepackage{bm}
\usepackage{kotex}

\topmargin 0.25in \textheight 8.5in \flushbottom
\setlength{\textwidth}{6.in} 
\setlength{\oddsidemargin}{.25in} 
\setlength{\evensidemargin}{.25in} 

\newtheorem{thm}{Theorem}[section]
\newtheorem{corollary}[thm]{Corollary}
\newtheorem{lem}[thm]{Lemma}

\theoremstyle{definition}
\newtheorem{defn}[thm]{Definition}
\newtheorem{example}[thm]{Example}
\newtheorem{assumption}[thm]{Assumption}
\theoremstyle{remark}
\newtheorem{rem}[thm]{Remark}
\newtheorem*{acknowledgment}{Acknowledgments}


\newcommand\bL{\mathbb{L}}
\newcommand\bR{\mathbb{R}}
\newcommand\bC{\mathbb{C}}
\newcommand\bH{\mathbb{H}}

\newcommand\bZ{\mathbb{Z}}

\newcommand\bE{\mathbb{E}}
\newcommand\bN{\mathbb{N}}
\newcommand\bP{\mathbb{P}}



\newcommand\cB{\mathcal{B}}
\newcommand\cC{\mathcal{C}}

\newcommand\cF{\mathcal{F}}

\newcommand\cH{\mathcal{H}}

\newcommand\cP{\mathcal{P}}

\newcommand\cS{\mathcal{S}}
\newcommand\cM{\mathcal{M}}

\newcommand\cbrk{\text{$]$\kern-.15em$]$}}
\newcommand\opar{\text{\,\raise.2ex\hbox{${\scriptstyle|}$}\kern-.34em$($}}
\newcommand\cpar{\text{$)$\kern-.34em\raise.2ex\hbox{${\scriptstyle |}$}}\,}
\newcommand\ep{\varepsilon}

\newcommand{\mysection}[1]{\section{#1}
\setcounter{equation}{0}}

\begin{document}

\title[SPDEs with a super-linear diffusion coefficient and a colored noise]{A regularity theory for stochastic partial differential equations with a super-linear diffusion coefficient and a spatially homogeneous colored noise }

\author[J.-H. Choi]{Jae-Hwan Choi}
\address[J.-H. Choi]{Department of mathematics, Korea University, 145 Anamro, Seongbukgu, Seoul, Seoul, Republic of Korea}
\email{choijh1223@korea.ac.kr}

\author[B.-S. Han]{Beom-Seok Han}
\address[B.-S. Han]{Department of mathematics, Korea University, 145 Anamro, Seongbukgu, Seoul, Seoul, Republic of Korea}
\email{hanbeom@korea.ac.kr}
\thanks{The authors have been supported by the National Research Foundation of Korea(NRF) grant funded by the Korea government(MSIT) (No. NRF-2019R1A5A1028324)}

\subjclass[2020]{60H15, 35R60}

\keywords{Stochastic partial differential equation, Nonlinear, Spatially homogeneous Gaussian noise,
 H\"older regularity, $L_p$ regularity }

\maketitle

\begin{abstract}

Existence, uniqueness, and regularity of a strong solution are obtained for stochastic PDEs with a colored noise $F$ and its super-linear diffusion coefficient: 
$$ 
du=(a^{ij}u_{x^ix^j}+b^iu_{x^i}+cu)dt+\xi|u|^{1+\lambda}dF, 
\quad (t,x)\in(0,\infty)\times\mathbb{R}^d,
$$
where $\lambda \geq 0$ and the coefficients depend on $(\omega,t,x)$. 
The strategy of handling nonlinearity of the diffusion coefficient is to find a sharp estimation for a general Lipschitz case,
and apply it to the super-linear case. 
Moreover, investigation for the estimate provides a range of $\lambda$, a sufficient condition for the unique solvability, where the range depends on the spatial covariance of $F$ and the spatial dimension $d$. 
\end{abstract}

\mysection{Introduction}

In the research on nonlinear stochastic partial differential equations (SPDEs), problems with nonlinear diffusion coefficients and their solution behaviors have been studied \cite{burdzy2010non, Kijung,mueller1999critical,mueller2009some,mueller2014nonuniqueness,
mytnik2011pathwise,skorokhod1982studies,Xiong,yamada1971uniqueness}  and applied to various other fields, such as chemistry \cite{arnold1981mathematical}, neurophysiology \cite{henry2006geometric,mckean1970nagumo,walsh1986introduction}, and population dynamics \cite{dawson1972stochastic}. 
In this article, we investigate the existence, uniqueness, and regularity of a solution to 
\begin{equation}\label{origineq}
\begin{aligned}
du=(a^{ij}u_{x^ix^j}+b^iu_{x^i}+cu)dt+\xi|u|^{1+\lambda} dF,  \,\, (t,x)\in(0,\infty)\times\bR^d; \,\, u(0,\cdot)=u_0, \\
\end{aligned}
\end{equation}
where $\lambda\geq0$. The coefficients $a^{ij},b^{i}$, and $c$ are $\cP\times\cB(\bR^d)$-measurable and twice continuously differentiable in $x$. The coefficient $\xi$ is $\cP\times\cB(\bR^d)$-measurable and bounded. The initial data $u_0(x)$ is random and nonnegative. The noise $F(t,x)$, referred to as the \textit{spatially homogeneous colored noise} \cite{dawson1980spatially}, is a centered Gaussian noise that is white in time and homogeneously colored in space, with covariance given by
$${\bE[F(t,x)F(s,y)]}=\delta_0(t-s)f(x-y),$$
where $\delta_0$ is the centered Dirac delta distribution. The distribution $f$ is {\it nonnegative} and {\it nonnegative definite}; see Definition \ref{191024_1}.

Next, we provide some motivation and history for this problem. 
An SPDE with a super-linear diffusion coefficient has been studied \cite{han2020boundary,kry99analytic,mueller1991long}:
\begin{equation}
\label{190917_01}
du = (au_{xx}+bu_x+cu)\,dt+\xi|u|^{1+\lambda}dW,\quad t>0;\quad u(0,\cdot) = u_0,
\end{equation}
where $\lambda$ is in $ [0,1/2)$, $u_0 \in C_c^\infty$ is nonnegative, and $W$ is a Gaussian noise that is white both in time and space. In \cite{mueller1991long}, C. Mueller proves the long-time existence of  solutions for the particular case where $a = 1$, $b = c = 0$, and $\xi = 1$. In \cite[Section 8.4]{kry99analytic}, N. Krylov proves the existence and  regularity of solutions to equation \eqref{190917_01}  with random and space and time-dependent coefficients. In \cite{han2020boundary}, the unique solvability in weighted Sobolev spaces is obtained and boundary behavior of the solution on a finite interval is studied.

Yet, it turns out that if the spatial dimension $d \geq 2$, an SPDE \eqref{190917_01} with the space-time white noise cannot have a real-valued solution; see e.g. \cite{walsh1986introduction}. 
To overcome this weakness, for higher-dimensional cases, the spatially  homogeneous colored noise has been studied \cite{dalang1998stochastic,dalang1999extending,dawson1980spatially,ferrante2006spdes}, as an alternation of the space-time white noise. 
In \cite{dawson1980spatially}, Dawson introduces the notion of the spatially homogeneous colored noise $F$ and proves the existence and uniqueness of a solution to 
\begin{equation}
\label{190917_02}
du = \Delta u dt + \sigma(u)dF;\quad u(0,\cdot) = u_0
\end{equation} 
with nonnegative constant initial data $u_0 = c\geq0$ and a linear diffusion coefficient $\sigma(u) = u$. In \cite{dalang1998stochastic,dalang1999extending}, $F(dt,dx)$ is interpreted as a martingale measure by employing Walsh's approach \cite{walsh1986introduction}. Here, the existence and uniqueness of a solution are obtained with $u_0 = c$ and $\sigma(u)$ satisfying the Lipschitz condition. In \cite{ferrante2006spdes}, inspired by \cite{kry99analytic},  the $L_p$-solvability is proved under the following conditions on $\sigma(u)$: for any $x\in \bR^d$ and $u\in \bR$, $\sigma(\omega,t,x,u)$ is predictable and 
\begin{equation*}
|\sigma(\omega,t,x,u) - \sigma(\omega,t,x,v)| \leq N|u-v|,
\end{equation*}
where $N$ is independent of $\omega,t,x,u,$ and $v$.

It should be remarked that, to the best of our knowledge, the studies on SPDEs with the colored noise $F$ have proceeded to equations with the Lipschitz diffusion coefficient only. 
A key objective of this paper is to extend the research scope and study SPDE \eqref{origineq} with a super-linear diffusion coefficient by employing the approach of \cite{kry99analytic}.

There are three notable achievements of this study. 
Firstly, 
the conditions for the initial data and the coefficients assumed in \cite{dalang1998stochastic,dalang1999extending,dawson1980spatially} are generalized: the initial data $u_0$ is random, and the coefficients $a^{ij}, b^i, c$ and $\xi$ depend on $\omega,t$, and $x$.

Secondly, 
we prove the existence and uniqueness of a strong solution and provide maximal H\"older regularity of the solution. The following basic example clarifies the maximal regularity of the strong solution to SPDE \eqref{origineq} in a multi-dimensional space: 
assume that $\lambda\in [0,1/d)$ and the spatial covariance {\it{function}} $f$ is nonnegative and nonnegative definite. 
If $u_0$ is nonnegative and in $L_p(\Omega;H_p^{1/2-\lambda}(\bR^d))\cap L_1(\Omega;L_1(\bR^d))$ for all $p>2$, then almost surely for any $T<\infty$ and small $\ep>0$, 
\begin{equation} \label{eq 19.12.21}
u\in C^{\frac{\gamma}{2}-\varepsilon,\gamma-\varepsilon}_{t,x}\left([0,T]\times \bR^d \right),
\end{equation}
where $\gamma = \frac{1}{2}\wedge (1-\lambda d)$. 
In other words, $u$ satisfies almost surely
$$
\sup_{t\in[0,T]}|u(t,\cdot)|_{C^{\gamma-\varepsilon}(\bR^d)}<\infty\quad\text{and} \quad\sup_{x\in\bR^d}|u(\cdot,x)|_{C^{\frac{\gamma}{2}-\varepsilon}([0,T])}<\infty.
$$

It is worthwhile to note that if we consider a {\it{general}} covariance, the number of regularity changes; see Corollary \ref{190916_01}.

Lastly, 
a range of $\lambda$ and conditions on $f$ are obtained as a sufficient condition for unique solvability of \eqref{origineq} in $L_p$-spaces. To see this, we investigate 
relations between the $L_p$-norm of diffusion coefficients and the $L_p$-norm of a solution, which leads to a sharp estimate on the diffusion coefficients.
Moreover, it is found out that summability of the coefficient $\xi$ has a critical effect on the range of $\lambda$; see Remark \ref{rmk:range_of_lambda}.

This paper is organized as follows. 
Section \ref{sec:preliminaries} introduces preliminary definitions and properties. 
In Section \ref{sec:main}, major results are presented for a general Lipschitz diffusion coefficient and for the super-linear coefficient. 
In Sections \ref{sec:lambda=0} and \ref{sec:superlinear}, proofs for the Lipschitz case and the super-linear case are elaborated, respectively.

We finish this section with the notations used in the article.
$\bN$, $\bZ$, and $\bC$ denote the set of natural numbers, integers, and complex numbers, respectively. $\bR$ is the set of all real numbers and $\bR^d$ stands for the $d$-dimensional Euclidean space of points $x = (x^1,\dots,x^d)$ for $x^i\in \bR$, $i=1,2,\dots,d$. { We use  `$:=$' to denote a definition. Let $M$ and $C_c^{\infty}=C_c^{\infty}(\bR^d)$ denote the set of nonnegative Borel measures on $\bR^d$, and  the set of  real-valued infinitely differentiable functions with compact support on $\bR^d$, respectively. $\cS=\cS(\bR^d)$ stands for the space of  Schwartz functions on $\bR^d$. $\cS_R$ denotes a set of real-valued Schwartz functions on $\bR^d$. Let $\cS'=\cS'(\bR^d)$ be the space of tempered distributions on $\bR^d$.} For $f,g\in \cS$, let us denote by
\begin{equation*}
\cF(f)(\xi):=\frac{1}{(2\pi)^{d/2}}\int_{\bR^d} e^{-i\xi\cdot x}f(x)dx,\quad \cF^{-1}(g)(x):=\frac{1}{(2\pi)^{d/2}}\int_{\bR^d} e^{i\xi\cdot x}g(\xi)d\xi,
\end{equation*}
the Fourier transform of $f$ in $\bR^d$ and inverse Fourier transform of $g$, respectively. 
For $p \in [1,\infty)$, a normed space $F$,
and a  measure space $(X,\mathcal{M},\mu)$, a function space $L_{p}(X,\cM,\mu;F)$
denotes the space of all $F$-valued $\mathcal{M}^{\mu}$-measurable functions
$u$ such that
\[
\left\Vert u\right\Vert _{L_{p}(X,\cM,\mu;F)}:=\left(\int_{X}\left\Vert u(x)\right\Vert _{F}^{p}\mu(dx)\right)^{1/p}<\infty,
\]
where $\mathcal{M}^{\mu}$ is the completion of $\cM$ with respect to the measure $\mu$.
For $a,b\in \bR$, 
$a \wedge b := \min\{a,b\}$, $a \vee b := \max\{a,b\}$.  For  $\mathbf{a}\in \bC$, $\bar{\mathbf{a}}$ denotes the complex conjugate of $\mathbf{a}$. For a function $\phi$ on $\bR^d$, we define $\tilde{\phi}(x) := \phi(-x)$.
A generic constant is denoted by $N$.
Writing $N = N(a,b,\cdots)$ implies that the constant $N$ depends only on $a,b,\cdots$. Finally,
for functions  depending on $\omega$, $t$,  and $x$, the argument
$\omega \in \Omega$ is omitted.

\mysection{Preliminaries} 
\label{sec:preliminaries}

In this section, basic definitions and related notions are introduced
on distributions, spatially homogeneous colored noises, and stochastic Banach spaces.

\subsection{Definitions and properties related to distributions}

\begin{defn}\label{191024_1}
\begin{enumerate}[(i)]
\item A $\bC$-valued continuous function $f$ is \textit{nonnegative definite} if 
\begin{equation*}
\int_{\bR^d}\int_{\bR^d}f(x-y)\psi(x)\bar{\psi}(y)dxdy=(f,\psi*\tilde{\bar{\psi}})\geq0 \quad \forall \psi\in \cS.
\end{equation*}

\item A distribution $f\in\cS'$ is \textit{nonnegative} if
\begin{equation*}
\label{positive}
(f,\psi)\geq 0 \quad \text{for all  nonnegative}~ \psi\in\cS_R.
\end{equation*}

\item  A distribution $f\in \cS'$ is \textit{nonnegative definite} if 
$$(f,\psi\ast\bar{\tilde{\psi}})\geq0\quad \forall \psi\in \cS.
$$

\item A  nonnegative Borel measure  $\mu \in M$ is  a \textit{tempered measure} if there exists $k\in[0,\infty)$ such that
\begin{equation}
\label{191024_02}
\int_{\bR^d}\frac{1}{(1+|x|^2)^{k/2}}\mu(dx)<\infty.
\end{equation}
\end{enumerate}
Let $C_{nd}$, $\cS'_{n}$, $\cS'_{nd}$, and $M_T$ denote the sets containing the objects defined in (i), (ii), (iii) and (iv), respectively.
\end{defn}

Below we collect some well-known results on the objects defined in Definition \ref{191024_1}; see \cite{gel2014generalized}.

\begin{thm} 
\label{191024_03}
\begin{enumerate}[(i)]
\item 
Let $M_F$ be the set of finite Borel measures and $L:M_F\to C_{nd}$ be a mapping such that
$$ (L\mu)(x):=\int_{\bR^d}e^{i\xi\cdot x}\mu(d\xi).$$
Then the mapping $L$ is bijective. 
\item 
There exists an one-to-one correspondence between $\cS'_{n}$ and $M_T$. Precisely, for any $f\in\cS'_{n}$, there exists a unique $\mu\in M_T$ such that
\begin{equation}
\label{191025_01}
(f,\psi)=\int_{\bR^d}\psi(x)\mu(dx)\quad \forall \psi\in\cS.
\end{equation}
Conversely, if $\mu\in M_T$ is given, there exists a unique $f\in \cS'_n$ satisfying \eqref{191025_01}.
For a given $f\in \cS_n'$, the corresponding $\mu = \mu_f$ is referred to as the corresponding measure of $f$.

\item There exists an one-to-one correspondence between $\cS'_{nd}$ and $M_T$. Precisely, for any $f\in\cS'_{nd}$, there exists a unique $\nu\in M_T$ such that
\begin{equation}
\label{191107_01}
(f,\psi)=\int_{\bR^d}\overline{\cF(\psi)}(\xi)\nu(d\xi)\quad \forall \psi\in\cS.
\end{equation}
Conversely, if $\nu\in M_T$ is given, there exists a unique $f\in \cS'_{nd}$ satisfying \eqref{191107_01}. 
For a given $f\in \cS_{nd}$, the corresponding $\nu = \nu_{\cF(f)}$ is referred to as the corresponding measure of $\cF(f)$.

\item For $f\in\cS'_{nd}$, there exist  $h\in C_{nd}$ and $l\in[0,\infty)$ such that
\begin{equation*}
f=(1-\Delta)^{l/2}h\quad\mbox{and}\quad h(x)=\int_{\bR^d}\frac{e^{i\xi\cdot x}}{(1+|\xi|^2)^{l/2}}\nu(d\xi),
\end{equation*}
where $\nu$ is reffered to as the corresponding measure of $\cF(f)$.
\end{enumerate}
\end{thm}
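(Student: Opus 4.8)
The plan is to establish the four parts in the order given, since each later statement feeds on the earlier ones, and to isolate the single piece of genuine analytic content (Bochner's theorem and its distributional refinement) while treating the rest as bookkeeping with Fourier conventions. For (i), I would first verify that $L$ lands in $C_{nd}$: continuity of $L\mu$ is immediate from dominated convergence because $\mu$ is finite, and for every $\psi\in\cS$ a direct computation gives $(L\mu,\psi*\tilde{\bar{\psi}}) = (2\pi)^{d}\int_{\bR^d}|\cF^{-1}\psi(\xi)|^2\,\mu(d\xi)\ge 0$, so $L\mu$ is nonnegative definite in the sense of Definition \ref{191024_1}(i). Injectivity is the uniqueness theorem for Fourier transforms of finite measures. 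The substantive direction is surjectivity, which is precisely Bochner's theorem: every continuous nonnegative definite $f$ equals $L\mu$ for a unique finite nonnegative measure $\mu$; I would either cite this or recover $\mu$ as the weak-$*$ limit of the nonnegative densities $(2\pi)^{-d}\int_{\bR^d} e^{-i\xi\cdot x}f(x)e^{-\ep|x|^2/2}\,dx$ as $\ep\downarrow 0$.

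For (ii), the functional $\psi\mapsto(f,\psi)$ is, by Definition \ref{191024_1}(ii), a positive linear functional; restricting it to $C_c^\infty$ and invoking the Riesz representation theorem produces a unique nonnegative Radon measure $\mu$ with $(f,\psi)=\int_{\bR^d}\psi\,d\mu$ for test functions. The temperedness of $f$, i.e. its continuity in the Schwartz topology, is exactly what forces $\mu$ to satisfy the growth bound \eqref{191024_02}: testing $f$ against a monotone family of cutoffs of $(1+|x|^2)^{-k/2}$ and using a single Schwartz seminorm estimate bounds $\int_{\bR^d}(1+|x|^2)^{-k/2}\mu(dx)$, so $\mu\in M_T$. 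Conversely, any $\mu\in M_T$ defines through \eqref{191025_01} a nonnegative tempered distribution, and uniqueness follows from density of $\cS$.

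For (iii), I would reduce to (ii) via the Fourier transform. Using $\cF(\psi*\tilde{\bar{\psi}})=(2\pi)^{d/2}|\cF\psi|^2$ together with the duality $(\cF f,\phi)=(f,\cF\phi)$, the nonnegative-definiteness of $f$ rewrites as $(\cF f,|\cF^{-1}\psi|^2)\ge 0$; since the squares $|\cF^{-1}\psi|^2$ generate the nonnegative Schwartz functions (after approximation), this shows $\cF f\in\cS'_{n}$. Applying (ii) to $\cF f$ yields the tempered measure $\nu=\mu_{\cF f}\in M_T$, and unwinding the transform gives \eqref{191107_01}, with the converse and uniqueness as in (ii). Finally, for (iv), temperedness of $\nu$ gives some $k$ with $\int_{\bR^d}(1+|\xi|^2)^{-k/2}\nu(d\xi)<\infty$; choosing $l\ge k$ makes $(1+|\xi|^2)^{-l/2}\nu(d\xi)$ a \emph{finite} measure, so by (i) its transform $h(x)=\int_{\bR^d}e^{i\xi\cdot x}(1+|\xi|^2)^{-l/2}\nu(d\xi)$ lies in $C_{nd}$, and the multiplier identity $\cF((1-\Delta)^{l/2}h)=(1+|\xi|^2)^{l/2}\cF(h)$ recovers $\nu$ on the Fourier side, giving $f=(1-\Delta)^{l/2}h$.

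The main obstacle is the one hard analytic input shared by (i)–(iii): upgrading a positivity or positive-definiteness hypothesis to the \emph{existence} of a representing (tempered) measure, namely Bochner's theorem for (i) and its distributional version, the Bochner--Schwartz theorem, underlying (ii)–(iii). Everything else amounts to verifying the precise normalizations so that \eqref{191025_01}, \eqref{191107_01}, and the formula in (iv) hold with the stated constants. Since these facts are classical and collected in \cite{gel2014generalized}, I would cite them for the construction of the measures and devote the written proof to the Fourier-convention checks and the seminorm estimate that pins down temperedness.
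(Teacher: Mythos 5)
Your proposal is correct in outline, and it actually does more than the paper, whose entire proof of Theorem \ref{191024_03} is the single line ``See \cite{gel2014generalized}.'' You reconstruct the standard arguments that this reference collects: classical Bochner's theorem for (i), Riesz representation plus a Schwartz-seminorm estimate to force temperedness for (ii), Fourier reduction of (iii) to (ii) (i.e., Bochner--Schwartz), and the weighting trick making $(1+|\xi|^2)^{-l/2}\nu(d\xi)$ a finite measure for (iv); your identity $(L\mu,\psi*\tilde{\bar{\psi}})=(2\pi)^{d}\int_{\bR^d}|\cF^{-1}\psi|^2\,d\mu$ and the choice $l\geq k$ in (iv) are correct as stated. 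What the paper's citation buys is brevity; what your expansion buys is a visible logical structure, correctly isolating the upgrade from positivity to a representing measure as the only hard analytic input. Two cautions are in order. First, the step in (iii) asserting that the squares $|\cF^{-1}\psi|^2$ ``generate the nonnegative Schwartz functions (after approximation)'' is the genuine content of Bochner--Schwartz and is more delicate than the parenthesis suggests: a nonnegative Schwartz function need not be the modulus-squared of a Schwartz function, and regularizations such as $\sqrt{\phi+\ep e^{-|x|^2/2}}$ need not be Schwartz, so the argument must instead run through Gaussian regularization of the distribution itself, as in \cite{gel2014generalized}; since you cite that reference precisely for the construction of the measures, this is acceptable, but it should not be filed under bookkeeping. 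Second, the right-hand side of \eqref{191107_01} is antilinear in $\psi$, so it matches your unwinding $(f,\psi)=(\cF f,\cF^{-1}\psi)$ only under the sesquilinear pairing convention of \cite{gel2014generalized} (or after restricting to $\psi\in\cS_R$); this is exactly one of the convention checks you promise, and it genuinely needs to be carried out. With those two caveats made explicit, your write-up is a faithful expansion of the proof the paper outsources.
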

\begin{proof}
See \cite{gel2014generalized}.
\end{proof}

\begin{rem}
\label{191108_01}

\begin{enumerate}[(i)]

\item For any given $\mu\in M_T$, $\cS\subseteq L_p(\mu)$ for all $p\in[1,\infty]$. 

\item If $f\in \cS'_{n}\cap C_{nd}$, then $f$ is a real-valued, nonnegative, symmetric, continuous, and  \textit{bounded} function. Thus, \eqref{191025_01} can be written as
\begin{equation}
\label{poft'}
(f,\phi) = \int_{\bR^d}\phi(x)\mu(dx)=\int_{\bR^d}\phi(x)f(x)dx \quad\forall\phi\in \cS.
\end{equation}
For more details, see \cite{gel2014generalized}.
\end{enumerate}
\end{rem}

\vspace{2mm}

\subsection{Spatially homogeneous colored noise}

Here and thereafter, let $(\Omega, \cF, \bP)$ be a complete probability space with a filtration $\{\cF_t\}_{t\geq0}$ satisfying the usual conditions, $\cP$ be the predictable $\sigma$-field related to $\cF_t$, and $f$ be a distribution such that 
$$f\in \cS'_{n}\cap\cS'_{nd}.
$$

\begin{defn}
The random noise $F(t,A)$ is called {\it a spatially homogeneous colored noise with covariance $f$} if $F(t,A):=F(1_{[0,t]}1_A)$ is a centered Gaussian process and its covariance is given by
\begin{equation} 
\label{cov}
\bE[F(t,A)F(s,B)]=t\wedge s(f,1_{A}*\tilde{1}_{B}),
\end{equation}
where $A$ and $B$ are bounded Borel sets and $t\geq0$.
\end{defn}

\begin{rem} \label{remark:representation}
The framework of Walsh \cite{walsh1986introduction} has been used to define a stochastic integral with respect to $F(dt,dx)$; see \cite{dalang1998stochastic,dalang1999extending}. On the other hand, the integral can also be written as an infinite summation of It\^o stochastic integral; for  predictable process $X(t,\cdot)$ such that
$$ X(t,x) = \zeta(x)1_{ \opar\tau_1,\tau_2\cbrk}(t)
$$
where $\tau_1$, $\tau_2$ are bounded stopping times, $\opar\tau_1,\tau_2\cbrk:=\{ (\omega,t):\tau_1(\omega)<t\leq \tau_2(\omega) \}$, and $\zeta\in C_c^\infty$, we have 
\begin{equation*} 
\int_{0}^t\int_{\bR^d}X(s,x)F(ds,dx)=\sum_{k=1}^{\infty}\int_0^t \int_{\bR^d}(f\ast e_k)(x) X(s,x) dxdw^k_s,
\end{equation*}
where $w_t^k, k \in \bN,$ is a one-dimensional independent Wiener process and $e_k(x), k \in \bN$ is an infinitely differentiable function induced by $f$. The construction and properties of $\{e_k, k\in\bN \}$ are described in Remarks \ref{rem:ek:construction} and \ref{rem:ek:property}. 
\end{rem}

\begin{rem} \label{rem:ek:construction}
\begin{enumerate}[(i)]
\item 
Let $\nu$ be the corresponding measure of $\cF(f)$ and define 
\begin{equation} \label{191025_02}
\langle\phi,\psi\rangle_{\cH}:= (f,\phi*\tilde{\bar{\psi}})  ~= (\phi,f\ast\bar{\psi}),
\end{equation}
for $\phi,\psi\in \cS$.
Then, if $\ell\ll\nu$, the operator $\langle\cdot,\cdot\rangle_{\cH}$ is an inner product on $\cS$. To see this, it suffices to show that $\langle\psi,\psi\rangle_{\cH} = 0$ yields $\psi = 0$ ($\ell$-a.e.) since the operator $\langle\cdot,\cdot\rangle_{\cH}$ is sesquilinear on $\cS$. By Theorem \ref{191024_03} $(iii)$, we have
$$
\langle\psi,\psi\rangle_{\cH}=\int_{\bR^d}|\cF(\psi)(\xi)|^2\nu(d\xi),
$$
and thus $\langle\psi,\psi\rangle_{\cH}=0$ implies $\cF(\psi)=0$ ($\nu$-a.e.). 
Since $\ell\ll\nu$, $\cF(\psi)=0$ ($\ell$-a.e.) and thus $\psi=0$ ($\ell$-a.e.). 

\item
Define $\cH$ as the closure of $\cS$ under the norm $\|\cdot\|^2_{\cH}:=\langle\cdot,\cdot\rangle_{\cH}$. Then there exists a complete orthonormal basis $\{e_k:k\in\bN\}\subseteq \cS_R$ of $\cH$, since $\cS$ is dense in $L_2(\nu)$ and $L_2(\nu)$ is isomorphic to $\cH$. 

\item 
If $\ell\not\ll\nu$, the existence of a complete orthonormal basis of $\cH$ can be shown similarly by identification with an equivalence relation; see \cite{dalang1999extending,gel2014generalized}.

\end{enumerate}
\end{rem}

\begin{rem}\label{rem:ek:property}
\begin{enumerate}[(i)]
\item 
Since $f\in\cS'_{n}\cap\cS'_{nd}$ and $e_k\in\cS_R$, the real-valued function $v_k:=f\ast e_k$ is infinitely differentiable. 
\item 
The function $v_k$ is bounded. Indeed, by Theorem \ref{191024_03} $(iv)$, we have
\begin{equation*}
\begin{aligned}
|v_k(x)|&=|f\ast e_k(x)|=|(f,e_k(x-\cdot))|=|((1-\Delta)^{l/2}h,e_k(x-\cdot))|\\
&=|(h,(1-\Delta)^{l/2}e_k(x-\cdot))|\leq \sup_{x\in\bR^d}|h(x)| \| (1-\Delta)^{l/2}e_k \|_{L_1}.
\end{aligned}
\end{equation*}

\end{enumerate} 
\end{rem}


\vspace{2mm}
\subsection{Stochastic Banach spaces}

This section provides definitions and properties of H\"older spaces, Sobolev spaces (Bessel potential spaces) and stochastic Banach spaces; see \cite{grafakos2009modern,kry99analytic,krylov2008lectures}.

\begin{defn}
\begin{enumerate}[(i)]
\item Let $T<\infty$. For $\delta \in (0,1)$, set
\begin{equation*}
    \begin{gathered}
    \left|u\right|_{C([0,T])} := \sup_{t\in[0,T]}\left|u(t)\right|,\quad\left[u\right]_{C^{\delta}([0,T])} := \sup_{t,s\in[0,T],t\neq s}\frac{\left| u(t) - u(s)\right|}{|t-s|^\delta},\\
    |u|_{C^{\delta}([0,T])} := |u|_{C([0,T])} + [u]_{C^{\delta}([0,T])}.
    \end{gathered}
\end{equation*}

Let $C^{\delta}([0,T])$ be the set of bounded continuous functions on $[0,T]$ such that finite norm $|u|_{C^{\delta}([0,T])}<\infty$.

\item For $k = 0,1,2,\dots$, $\delta \in (0,1)$, and multi-index $\beta$, set
\begin{equation*}
    \begin{gathered}
    \left[u\right]_{C^k} := \sup_{|\beta| = k}\sup_{x\in\bR^d}\left|D^\beta u(x)\right|,\quad |u|_{C^k} := \sum_{j=0}^k\left[u\right]_{C^j},\\
\left[u\right]_{C^{k+\delta}} := \sup_{|\beta| = k}\sup_{x,y\in\bR^d,x\neq y}\frac{\left|D^\beta u(x) - D^\beta u(y)\right|}{|x-y|^\delta}, \quad
|u|_{C^{k+\delta}} := |u|_{C^k} + [u]_{C^{k+\delta}}.
    \end{gathered}
\end{equation*}

Let $C^{k+\alpha}$ be the set of bounded continuous functions on $\bR^d$ such that finite norm $|u|_{C^{k+\delta}}<\infty$.
\end{enumerate}
\end{defn}

\begin{defn}
For $p\in(1,\infty)$ and $\gamma \in \mathbb{R}$,
\begin{enumerate}[(i)]
\item  let $H_p^\gamma=H_p^\gamma(\bR^d)$ denote the class of tempered distributions $u$ on $\bR^d$ such that
$$ \| u \|_{H_p^\gamma} := \| (1-\Delta)^{\gamma/2} u\|_{L_p} = \| \cF^{-1}[ (1+|\xi|^2)^{\gamma/2}\cF(u)(\xi)]\|_{L_p}<\infty,
$$

\item let $H_p^\gamma(l_2) = H_p^\gamma(\bR^d;l_2)$ denote the class of $l_2$-valued tempered distributions $g=(g^1,g^2,\cdots)$ on $\bR^d$ such that
$$ \|g\|_{H_{p}^\gamma(l_2)}:= \| | (1-\Delta)^{\gamma/2} g|_{l_2}\|_{L_p} = \| |\cF^{-1}[ (1+|\xi|^2)^{\gamma/2}\cF(g)(\xi)]|_{l_2} \|_{L_p}<\infty.
$$	
\end{enumerate}
\end{defn}

\begin{rem}
\label{R}
It is well known (e.g., \cite{krylov2008lectures}) that for $\gamma\in (0,\infty)$, 
\begin{equation*}
(1-\Delta)^{-\gamma/2}u(x)=\int_{\bR^d}R_{\gamma}(x-y)u(y)dy,
\end{equation*}
where
\begin{equation*}
\label{ker'}
R_{\gamma}(x)=c(\gamma,d)|x|^{\gamma-d}\int_{0}^{\infty}t^{\frac{-d+\gamma-2}{2}}e^{-|x|^2t-\frac{1}{4t}}dt.
\end{equation*}
Observe that (e.g., \cite[Proposition 1.2.5.]{grafakos2009modern})
\begin{equation} 
\label{eqn 20.01.02}
R_\gamma(x) \leq N_1e^{-\frac{|x|}{2}}1_{|x|\geq2}+N_2 A_\gamma(x)1_{|x|<2},
\end{equation}
where 
\begin{equation*}
\begin{aligned}
A_{\gamma}(x)&:=(O(|x|^{\gamma-d+2})+|x|^{\gamma-d}+1)1_{0<\gamma<d}\\
  &\quad+(\log(2/|x|)+1+O(|x|^{2}))1_{\gamma=d}+(1+O(|x|^{\gamma-d}))1_{\gamma>d},
\end{aligned}
\end{equation*}
and $N_i~ (i = 1,2)$ depends only on $\gamma$ and $d$. 
\end{rem}\ \\


Below some useful facts on $H_p^\gamma$ are described.
\begin{lem}
\label{prop_of_bessel_space} Let $\gamma \in \bR$ and $p\in(1,\infty)$. 
\begin{enumerate}[(i)]
\item 
The spaces  $C_c^\infty$ and $\cS$ are dense in $H_{p}^{\gamma}$. 

\item
\label{sobolev-embedding} Let $\gamma - d/p = n+\nu$ for some $n=0,1,\cdots$ and $\nu\in(0,1]$. Then for any  $i\in\{ 0,1,\cdots,n \}$, we have
\begin{equation} \label{holder embedding}
| D^i u |_{C} + | D^n u |_{\cC^\nu} \leq N \| u \|_{H_{p}^\gamma},
\end{equation}
where $\cC^\nu$ is the Zygmund space.
\item 
Let 
\begin{equation*} \label{condition_of_constants_interpolation}
\begin{gathered}
\kappa\in[0,1],\quad p_i\in(1,\infty),\quad\gamma_i\in \bR,\quad i=0,1,\\
\gamma=\kappa\gamma_1+(1-\kappa)\gamma_0,\quad1/p=\kappa/p_1+(1-\kappa)/p_0.
\end{gathered}
\end{equation*}
Then we have
\begin{equation*}
\|u\|_{H^\gamma_{p}} \leq \|u\|^{\kappa}_{H^{\gamma_1}_{p_1}}\|u\|^{1-\kappa}_{H^{\gamma_0}_{p_0}}.
\end{equation*}
\end{enumerate}

\end{lem}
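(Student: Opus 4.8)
The three assertions are standard facts about Bessel potential spaces, and in a sketch I would organize everything around the isometry $\Lambda^\gamma:=(1-\Delta)^{\gamma/2}$, which by the very definition of the norm is an isomorphism of $H_p^\gamma$ onto $L_p$, together with the observation that the Fourier multiplier $(1+|\xi|^2)^{\gamma/2}$ is smooth with all derivatives of polynomial growth, so that $\Lambda^{\pm\gamma}$ maps $\cS$ bijectively onto $\cS$. For (i), given $u\in H_p^\gamma$ I set $g:=\Lambda^\gamma u\in L_p$ and pick $g_n\in\cS$ with $g_n\to g$ in $L_p$; then $u_n:=\Lambda^{-\gamma}g_n\in\cS$ satisfies
$$\|u-u_n\|_{H_p^\gamma}=\|g-g_n\|_{L_p}\to0,$$
which gives density of $\cS$. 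To pass to $C_c^\infty$, I would further approximate each Schwartz $u_n$ by its truncations $\zeta(\cdot/R)u_n$, with $\zeta\in C_c^\infty$ equal to $1$ near the origin, checking convergence in $H_p^\gamma$ via boundedness of $\Lambda^\gamma$ and standard product estimates; this is the only part of (i) requiring care.

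For (ii), writing $g:=\Lambda^\gamma u\in L_p$ gives the kernel representation $u=R_\gamma\ast g$, with $R_\gamma$ as in Remark \ref{R}. Since $\gamma-d/p=n+\nu>0$, differentiating under the convolution yields $D^iu=(D^iR_\gamma)\ast g$, where $D^iR_\gamma$ has a local singularity of order $|x|^{\gamma-i-d}$ and the exponential tail of \eqref{eqn 20.01.02}. For $0\le i\le n$ one has $\gamma-i\ge\gamma-n=d/p+\nu>d/p$, hence $D^iR_\gamma\in L_{p'}$, and Hölder's inequality gives
$$|D^iu|_C\le\|D^iR_\gamma\|_{L_{p'}}\|g\|_{L_p}\le N\|u\|_{H_p^\gamma}.$$
The top-order seminorm $[D^nu]_{\cC^\nu}$ I would obtain by bounding the second difference $D^nu(x+h)-2D^nu(x)+D^nu(x-h)$ through $L_{p'}$-estimates on the corresponding second differences of $D^nR_\gamma$; it is precisely the borderline value $\nu=1$ that forces the Zygmund space $\cC^\nu$ in place of the classical Hölder space.

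For (iii), this is complex interpolation, and by (i) it suffices to treat $u\in\cS$. For $v\in\cS$ with $\|v\|_{L_{p'}}\le1$ I would set $\gamma(z)=(1-z)\gamma_0+z\gamma_1$ and $1/p(z)=(1-z)/p_0+z/p_1$ on the strip $0\le\mathrm{Re}\,z\le1$, and form the Stein analytic family
$$F(z)=\int_{\bR^d}\Lambda^{\gamma(z)}u(x)\,G_z(x)\,dx,$$
where $G_z$ is built from $v$ with $|G_z|=|v|^{p'/p'(z)}$ and the phase frozen, normalized so that $F(\kappa)=\langle\Lambda^\gamma u,v\rangle$. On the two boundary lines $|F|$ is controlled by $\|u\|_{H_{p_0}^{\gamma_0}}$ and $\|u\|_{H_{p_1}^{\gamma_1}}$, using the uniform $L_q$-boundedness of the imaginary powers $\Lambda^{it}=(1-\Delta)^{it/2}$ (a Mikhlin multiplier estimate, whose at most polynomial growth in $t$ is absorbed by a Gaussian regularizer $e^{\delta(z^2-\kappa^2)}$ and removed as $\delta\to0$). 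The three-lines theorem then gives $|F(\kappa)|\le(\sup_{\mathrm{Re}\,z=0}|F|)^{1-\kappa}(\sup_{\mathrm{Re}\,z=1}|F|)^\kappa$, and taking the supremum over $v$ yields the stated inequality with constant one.

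I expect the main obstacle to be (ii): the kernel estimates themselves are elementary once \eqref{eqn 20.01.02} is in hand, but extracting the sharp top-order regularity — in particular the endpoint $\nu=1$ through the second-difference (Zygmund) estimate — is the genuinely delicate point. Since all three assertions are classical, in practice one may simply invoke \cite{grafakos2009modern,kry99analytic,krylov2008lectures}.
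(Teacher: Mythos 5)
Your sketches are essentially sound, but note that the paper does not prove this lemma at all: its entire proof is a citation --- (i) and (ii) to \cite{krylov2008lectures}, and (iii) to \cite{krein2002interpolation} --- exactly as you anticipate in your closing sentence. So the real comparison is between your self-contained arguments and the paper's deferral to the literature. Your route for (i) (lifting by $(1-\Delta)^{\gamma/2}$, which maps $\cS$ bijectively onto $\cS$, followed by cutoff approximation) and for (ii) (the representation $u=R_\gamma\ast g$ with $g=(1-\Delta)^{\gamma/2}u\in L_p$, H\"older's inequality against the kernel bounds of Remark \ref{R} and \eqref{eqn 20.01.02}, and a second-difference argument for the Zygmund seminorm at the endpoint $\nu=1$) are the standard textbook proofs; writing them out would make the paper self-contained at the cost of a page or two, whereas the citation buys brevity, reasonable since these facts are purely auxiliary here.

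One point where your sketch over-claims is the constant in (iii). The three-lines argument with the regularizer $e^{\delta(z^2-\kappa^2)}$ does not obviously produce the constant $1$ asserted in the lemma: the imaginary powers $(1-\Delta)^{it/2}$ have $L_q$-operator norms growing polynomially in $|t|$, and after damping by $e^{-\delta t^2}$ and applying the three-lines (Hirschman) inequality, what survives is the Poisson-kernel average of the logarithm of that polynomial growth, which does \emph{not} vanish as $\delta\to 0$. Your argument therefore yields the multiplicative inequality with some constant $N=N(d,\gamma_i,p_i,\kappa)\geq 1$ rather than with constant $1$; obtaining the exact constant is precisely what the appeal to the theory of scales in \cite{krein2002interpolation} is for. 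The discrepancy is harmless for this paper: the only place (iii) is used (the proof of Theorem \ref{case=0}) immediately combines it with Young's inequality to produce $\varepsilon\|u-w\|_{H_p^{\gamma}}+N_\varepsilon\|u-w\|_{H_p^{\gamma-1}}$, where any fixed multiplicative constant is absorbed into $N_\varepsilon$.
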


\begin{proof}
For (i) and (ii), see \cite{krylov2008lectures}. 
The multiplicative inequality (iii) follows from \cite{krein2002interpolation}.
\end{proof}

Recall that $(\Omega, \cF, \bP)$ is a complete probability space with a filtration $\{\cF_t\}_{t\geq0}$ satisfying the usual conditions and $\cP$ is the predictable $\sigma$-field related to $\cF_t$.

\begin{defn} \label{def:sto-banach}
For $\tau\leq T$, let us denote $\opar0,\tau\cbrk:=\{ (\omega,t):0<t\leq \tau(\omega) \}$.
\begin{enumerate}[(i)]
\item 
For $\tau\leq T$, {\it{Stochastic Banach spaces}} are defined by 
\begin{gather*}
\mathbb{H}_{p}^{\gamma}(\tau) := L_p(\opar0,\tau\cbrk, \mathcal{P}, d\bP \times dt ; H_{p}^\gamma),\\
\mathbb{H}_{p}^{\gamma}(\tau,l_2) := L_p(\opar0,\tau\cbrk,\mathcal{P}, d\bP \times dt;H_{p}^\gamma(l_2)),\\
U_{p}^{\gamma} :=  L_p(\Omega,\cF_0, d\bP ; H_{p}^{\gamma-2/p}).
\end{gather*}	
For convenience, we write $\bL_p(\tau):=\bH^{0}_{p}(\tau)$ and $\bL_p(\tau,l_2):=\bH^{0}_{p}(\tau,l_2)$.

\item \label{def:Hp-gamma}
The norm of each stochastic Banach space is defined in the usual way, e.g., 
\begin{equation} \label{norm}
\| u \|^p_{\bH^{\gamma}_{p}(\tau)} := \bE \int^{\tau}_0 \| u(t) \|^p_{H^{\gamma}_{p}}dt. 
\end{equation}

\end{enumerate}
\end{defn}

\begin{defn} \label{def_of_sol_1}
Let $\tau$ be a bounded stopping time and $u \in \bH_p^{\gamma}(\tau)$.
\begin{enumerate}[(i)]
\item 
We write $u\in\cH^{\gamma}_p(\tau)$ if $u_0\in U_{p}^{\gamma}$ and there exists $(f,g)\in
\bH_{p}^{\gamma-2}(\tau)\times\bH_{p}^{\gamma-1}(\tau,l_2)$ such that
\begin{equation*}
du = fdt+\sum_{k=1}^{\infty} g^k dw_t^k,\quad   t\in (0, \tau]\,; \quad u(0,\cdot) = u_0
\end{equation*}
in the sense of distributions, i.e., for any $\phi\in C_c^\infty$, the equality
\begin{equation} \label{def_of_sol_2}
(u(t,\cdot),\phi) = (u_0,\phi) + \int_0^t(f(s,\cdot),\phi)ds + \sum_{k=1}^{\infty} \int_0^t(g^k(s,\cdot),\phi)dw_s^k
\end{equation}
holds for all $t\in [0,\tau]$ almost surely.
In this case, we write
\begin{equation*}
\mathbb{D}u:= f,\quad \mathbb{S}u:=g.
\end{equation*}

\item
The norm of $\cH_{p}^{\gamma}(\tau)$ is defined by
\begin{equation*}
\| u \|_{\cH_{p}^{\gamma}(\tau)} :=  \| u \|_{\mathbb{H}_{p}^{\gamma}(\tau)} + \| \mathbb{D}u \|_{\mathbb{H}_{p}^{\gamma-2}(\tau)} + \| \mathbb{S}u \|_{\mathbb{H}_{p}^{\gamma-1}(\tau,l_2)} + \| u(0,\cdot) \|_{U_{p}^{\gamma}}.
\end{equation*}

\end{enumerate}
\end{defn}

\begin{defn}
Let $\tau$ be a stopping time. 
We write $u\in\cH_{p,loc}^{\gamma}(\tau)$ if there exists a sequence of bounded stopping times $\tau_n\uparrow \tau$ such that $u\in \cH_{p}^{\gamma}(\tau_n)$ for each $n$. 
We say $u = v$ in $\cH_{p,loc}^{\gamma}(\tau)$ if there are bounded stopping times $\tau_n\uparrow\tau$ such that $u = v$ in $\cH_{p}^{\gamma}(\tau_n)$ for each $n$.
For convenience, $\tau$ is omitted when $\tau = \infty$.
\end{defn}

\begin{rem} \label{schwartz}
Definition \ref{def_of_sol_1} can be defined with $\psi\in \cS$, instead of $\phi\in C_c^\infty$; see, e.g., \cite[Remark 3.4]{kry99analytic}. 
\end{rem}

\begin{rem} \label{equation_rewrite}
Main equation \eqref{origineq} can be understood in the sense of distribution. In other words, for any $\phi\in C_c^\infty$, the equality
\begin{equation*}
\begin{aligned}
(u(t,\cdot),\phi)= (u_0,\phi) &+ \int_0^t(a^{ij}(s,\cdot)u_{x^ix^j}(s,\cdot)+b^i(s,\cdot)u_{x^i}(s,\cdot)+c(s,\cdot)u(s,\cdot),\phi)ds \\
&+  \int_0^t\int_{\bR^d}\xi(s,x)|u(s,x)|^{1+\lambda}\phi(x) F(ds,dx)  
\end{aligned}
\end{equation*}
holds for all $t\in[0,\tau]$ almost surely. 
Due to Remark \ref{remark:representation} and \cite[Theorem 3.10]{kry99analytic}, if $u\in \cH_{p,loc}^\gamma$ for some $\gamma> 0$ and $p\geq2$, then we have
$$ \int_{0}^t\int_{\bR^d}\xi(s,x)|u(s,x)|^{1+\lambda}\phi(x)\,F(ds,dx)=\sum_{k=1}^{\infty}\int_0^t\int_{\bR^d} (f\ast e_k)(x)\xi(s,x)|u(s,\cdot)|^{1+\lambda} \phi(x)dxdw^k_s.
$$
Thus, we prove that the equation
\begin{equation*}
du=(a^{ij}u_{x^ix^j}+b^iu_{x^i}+cu)dt+\sum_{k=1}^{\infty}\xi|u|^{1+\lambda}(f\ast e_k)dw^k_t,
\quad (t,x) \in (0,\infty) \times \bR^d ; \quad u(0,\cdot)=u_0,
\end{equation*}
has a unique  solution $u\in\cH_{p,loc}^{\gamma}$.
\end{rem}

Below embedding theorems for stochastic Banach spaces are introduced.

\begin{thm} \label{embedding}
Let $\tau\leq T$ be a bounded stopping time and $\gamma \in \bR$.
\begin{enumerate}[(i)]
\item \label{embedding:1}
For any $p\in [2,\infty)$, $\cH_p^{\gamma}(\tau)$ is a Banach space with the norm $\|\cdot\|_{\cH_p^{\gamma}(\tau)}$.

\item \label{h_embedding} 
If $p\in(2,\infty)$ and $\frac{1}{p}< \alpha < \beta < \frac{1}{2}$, then for any $u\in\cH_p^{\gamma}(\tau)$, we have 
$$
u\in C^{\alpha-\frac{1}{p}}([0,\tau];H_p^{\gamma-2\beta})\quad(a.s.)
$$  and
\begin{equation}\label{holder_embedding}
\bE|u|^p_{C^{\alpha-\frac{1}{p}}([0,\tau];H_p^{\gamma-2\beta})}\leq N(d,p,\alpha,\beta,T)\|u\|^p_{\cH_p^{\gamma}(\tau)}.
\end{equation}

\item \label{embedding:3}
If $p=2$, then inequality \eqref{holder_embedding} holds for $\alpha = \beta = 1/p = 1/2$, i.e., $u\in C([0,\tau];H_2^{\gamma-1})$ (a.s.) and
\begin{equation} \label{holder_embedding_2}
\bE\sup_{t\leq\tau}\|u\|^2_{H_2^{\gamma-1}}  \leq N(d,T)\|u\|^2_{\cH_2^{\gamma}(\tau)}.
\end{equation}

\item \label{gronwall-type} 
If $p\in[2,\infty)$ and $\tau \equiv T$, then for  any $t\leq T$,
\begin{equation} \label{stochastic_gronwall}
\|u\|^p_{\bH^{\gamma-1}_{p}(t)}\leq N(d,p,T)\int^t_0 \|u\|^p_{\cH^{\gamma}_{p}(s)}\,ds.
\end{equation}
\end{enumerate}
\end{thm}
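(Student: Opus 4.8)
The four assertions are the standard embedding theorems for stochastic Banach spaces in the sense of Krylov, and my plan is to follow the strategy of \cite{kry99analytic} (see also \cite{krylov2008lectures}). A preliminary reduction simplifies all four parts: the map $(1-\Delta)^{\gamma/2}\colon H_p^{s}\to H_p^{s-\gamma}$ is an isometry that commutes with $\bD$ and $\bS$, hence carries $\cH_p^\gamma(\tau)$ isometrically onto $\cH_p^0(\tau)$ and shifts every regularity index by $-\gamma$; I may therefore assume $\gamma=0$ throughout.

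For part (i) I would first check that $u\mapsto(u,\bD u,\bS u,u(0,\cdot))$ is well defined, i.e. that the decomposition $du=\bD u\,dt+\sum_k\bS^k u\,dw^k_t$ is unique. If two such decompositions exist, then, tested against any $\phi\in C_c^\infty$, their difference is a real-valued continuous process that is simultaneously of bounded variation and a (local) martingale, hence identically zero; this forces $\bD u$, $\bS u$ and $u(0,\cdot)$ to be determined by $u$, so the expression is genuinely a norm. Completeness then amounts to closedness of $\bD$ and $\bS$: for a Cauchy sequence $u_n$ the components converge in the complete spaces $\bH_p^\gamma(\tau)$, $\bH_p^{\gamma-2}(\tau)$, $\bH_p^{\gamma-1}(\tau,l_2)$ and $U_p^\gamma$ to some $u,f,g,\psi$, and I would pass to the limit in the distributional identity \eqref{def_of_sol_2}. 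The only delicate term is the stochastic integral, which I would control by the Burkholder--Davis--Gundy and Doob inequalities together with $g_n\to g$ in $\bH_p^{\gamma-1}(\tau,l_2)$, concluding $\bD u=f$, $\bS u=g$, $u(0,\cdot)=\psi$, so that $u\in\cH_p^\gamma(\tau)$ and $u_n\to u$ in this norm.

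The analytic core is parts (ii) and (iii). Writing $u(t)-u(s)=\int_s^t\bD u\,dr+\int_s^t\bS u\,dw_r$, I would first estimate the increment in the rough space $H_p^{\gamma-2}$, handling the drift by Minkowski's inequality and the stochastic integral by the Burkholder--Davis--Gundy inequality (using $H_p^{\gamma-1}\hookrightarrow H_p^{\gamma-2}$), to get
$\bE\|u(t)-u(s)\|_{H_p^{\gamma-2}}^p\le N|t-s|^{p/2-1}\,\bE\!\int_s^t\big(\|\bD u(r)\|^p_{H_p^{\gamma-2}}+\|\bS u(r)\|^p_{H_p^{\gamma-1}(l_2)}\big)\,dr.$
This control is then interpolated against the a.e.-in-time bound $u(t)\in H_p^\gamma$ through the multiplicative inequality of Lemma \ref{prop_of_bessel_space}, with the identification $\gamma-2\beta=(1-\beta)\gamma+\beta(\gamma-2)$, producing moment bounds on the increments of $u$ in the intermediate space $H_p^{\gamma-2\beta}$. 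After integrating in $(s,t)$, the gap $\alpha<\beta$ is exactly what renders the kernel $|t-s|^{-(1+\alpha p)}$ integrable against the gained power, so that the fractional Sobolev (Garsia--Rodemich--Rumsey) embedding $W^{\alpha,p}\hookrightarrow C^{\alpha-1/p}$ in the time variable upgrades the moment bounds to the almost sure Hölder continuity and to \eqref{holder_embedding}. The endpoint $p=2$, $\alpha=\beta=1/2$ of part (iii) sacrifices the Hölder gain but keeps continuity, and here I would argue directly with the $p=2$ Burkholder--Davis--Gundy and Doob inequalities applied to $(u(t),\phi)$, followed by a density argument over $\phi$.

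Finally, part (iv) is a corollary of (ii)/(iii): since $\beta<1/2$ gives $H_p^{\gamma-2\beta}\hookrightarrow H_p^{\gamma-1}$, the embedding yields $\bE\sup_{r\le s}\|u(r)\|_{H_p^{\gamma-1}}^p\le N\|u\|^p_{\cH_p^\gamma(s)}$ (invoking part (iii) when $p=2$), whence
\[
\|u\|^p_{\bH^{\gamma-1}_p(t)}=\bE\int_0^t\|u(r)\|^p_{H_p^{\gamma-1}}\,dr\le\int_0^t\bE\sup_{r'\le r}\|u(r')\|^p_{H_p^{\gamma-1}}\,dr\le N\int_0^t\|u\|^p_{\cH_p^\gamma(r)}\,dr.
\]
The main obstacle is the estimate underlying (ii)--(iii): arranging the interpolation exponents together with the stochastic-integral increment bounds so that the trade-off between the time-Hölder exponent $\alpha-1/p$ and the spatial regularity $\gamma-2\beta$ comes out sharp, and ensuring the Garsia--Rodemich--Rumsey argument applies uniformly up to the stopping time $\tau$ (in particular, that interpolating the two increment scales pointwise in $\omega$ is reconciled with the $L_p(\Omega)$ moment bounds via Hölder's inequality).
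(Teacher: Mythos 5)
Your parts (i) and (iv) are fine, and your (iv) is exactly the paper's own argument: indeed, the paper proves only (iv), citing Sections 3 and 7 of \cite{kry99analytic} for (i)--(iii). The genuine problem is your proof of part (ii), whose exponent bookkeeping does not close. Your increment estimate in the rough space gives
\[
\bE\|u(t)-u(s)\|^p_{H_p^{\gamma-2}}\leq N|t-s|^{p/2-1}\|u\|^p_{\cH_p^{\gamma}(\tau)},
\]
while the other interpolation factor, $\|u(t)-u(s)\|_{H_p^{\gamma}}\leq\|u(t)\|_{H_p^{\gamma}}+\|u(s)\|_{H_p^{\gamma}}$, carries no decay in $|t-s|$ whatsoever and is controlled only in $L_p$ in time. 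Feeding these into the multiplicative inequality with weights $1-\beta$, $\beta$ and then into the Garsia--Rodemich--Rumsey (fractional Sobolev) integral, the diagonal singularity is $|t-s|^{\beta(p/2-1)-1-\alpha p}$ times a factor with no smallness, so integrability forces $\beta(p/2-1)-\alpha p>0$, i.e. $\alpha<\beta\left(\tfrac12-\tfrac1p\right)$, not the claimed full range $\alpha<\beta$. Thus your route proves a strictly weaker statement: for large $p$ and $\beta$ near $1/2$ it caps the time-H\"older exponent near $1/4$, whereas \eqref{holder_embedding} allows nearly $1/2$. Your closing remark that the ``main obstacle'' is arranging the interpolation exponents sharply is precisely where the approach breaks: it cannot be arranged.

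The obstruction is structural, not technical: the sharp trade-off (one spatial derivative for half a time derivative, all the way up to $\alpha<\beta$) is a parabolic smoothing effect, not an interpolation effect. In Krylov's Section 7 proof, which the paper invokes, one uses the mild-solution representation
\[
u(t)=T_tu(0)+\int_0^tT_{t-s}\bigl(\bD u-\Delta u\bigr)\,ds+\sum_k\int_0^tT_{t-s}\,\bS^ku\,dw^k_s,\qquad T_t:=e^{t\Delta},
\]
so that the semigroup lifts $\bD u-\Delta u\in H_p^{\gamma-2}$ and $\bS u\in H_p^{\gamma-1}(l_2)$ into $H_p^{\gamma-2\beta}$ with singularities $(t-s)^{-(1-\beta)}$ and $(t-s)^{-(1/2-\beta)}$, integrable and square-integrable respectively; combined with Burkholder--Davis--Gundy (or the factorization method) this produces increments of order $|t-s|^{\beta}$ in $H_p^{\gamma-2\beta}$ and hence the full range $\alpha<\beta$. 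Note also that since $2\beta<1$ the target space $H_p^{\gamma-2\beta}$ is \emph{smoother} than $H_p^{\gamma-1}$, so no argument treating the martingale part $\int_0^t\bS u\,dw$ on its own can work (it need not even take values in $H_p^{\gamma-2\beta}$); the spatial gain must come from the coupling with the drift, which your interpolation exploits only up to the factor-of-two loss identified above.
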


\begin{proof}
Theorem \ref{embedding} \eqref{embedding:1}-\eqref{embedding:3} are proved in \cite{kry99analytic}; 
for \eqref{embedding:1}, see Section 3; 
for \eqref{h_embedding} and \eqref{embedding:3}, see Section 7. 
In the case of \eqref{gronwall-type}, use \eqref{norm} and inequalities \eqref{holder_embedding} and \eqref{holder_embedding_2}.
\end{proof}

 We introduce H\"older embedding theorem for solution space $\cH_p^\gamma(\tau)$.
\begin{corollary} \label{s.t. holder}
Let $\tau\leq T$ be a bounded stopping time and $p\in(2,\infty)$, $\gamma\in(0,\frac{1}{2})$, $\alpha,\beta\in(\frac{1}{p},\frac{1}{2})$ such that
\begin{equation}
\label{alpha,beta}
\frac{1}{p}<\alpha<\beta<\frac{\gamma}{2}-\frac{d}{2p}.
\end{equation}
Then for any $\delta \in [0,\gamma-2\beta-\frac{d}{p})$, we have 
\begin{equation}
\label{space-time_holder}
\bE|u|^p_{C^{\alpha-\frac{1}{p}}([0,\tau];C^{\gamma-2\beta-\frac{d}{p}-\delta}(\bR^d) )}\leq N(d,p,\alpha,\gamma,T)\|u\|_{\cH_p^{\gamma}(\tau)}^p.
\end{equation}
\end{corollary}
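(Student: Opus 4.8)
The plan is to factor the desired space-time embedding into a temporal part and a spatial part and then simply compose them. First I would invoke the temporal H\"older embedding of Theorem \ref{embedding} \eqref{h_embedding}. To apply it I must check that the exponents satisfy $\frac{1}{p}<\alpha<\beta<\frac{1}{2}$; this follows from the hypothesis \eqref{alpha,beta} together with $\gamma<\frac{1}{2}$, since then $\frac{\gamma}{2}-\frac{d}{2p}<\frac{1}{4}<\frac{1}{2}$. Theorem \ref{embedding} \eqref{h_embedding} then yields $u\in C^{\alpha-\frac{1}{p}}([0,\tau];H_p^{\gamma-2\beta})$ almost surely, together with the bound
\[
\bE|u|^p_{C^{\alpha-\frac{1}{p}}([0,\tau];H_p^{\gamma-2\beta})}\leq N\,\|u\|^p_{\cH_p^{\gamma}(\tau)}.
\]

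Next I would upgrade the spatial regularity by the Sobolev embedding of Lemma \ref{prop_of_bessel_space} \eqref{sobolev-embedding}. Set $\nu:=\gamma-2\beta-\frac{d}{p}$. The constraint $\beta<\frac{\gamma}{2}-\frac{d}{2p}$ is exactly $\nu>0$, and since $\gamma<\frac{1}{2}$ one has $0<\nu<\frac{1}{2}<1$; in particular $\nu$ is not an integer, so the Zygmund space $\cC^{\nu}$ coincides with the classical H\"older space $C^{\nu}(\bR^d)$. Hence Lemma \ref{prop_of_bessel_space} \eqref{sobolev-embedding} (with $n=0$) furnishes a bounded linear embedding $E\colon H_p^{\gamma-2\beta}\to C^{\nu}(\bR^d)$ satisfying $|Ev|_{C^{\nu}}\leq N\|v\|_{H_p^{\gamma-2\beta}}$.

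Then I would compose. Because $E$ is bounded, linear, and acts only on the spatial variable, applying it to $u(t,\cdot)$ preserves the temporal H\"older structure: I would estimate the sup-in-time part by $\sup_t|u(t,\cdot)|_{C^{\nu}}\leq N\sup_t\|u(t,\cdot)\|_{H_p^{\gamma-2\beta}}$ and the time-increment part by $|u(t,\cdot)-u(s,\cdot)|_{C^{\nu}}=|E(u(t,\cdot)-u(s,\cdot))|_{C^{\nu}}\leq N\|u(t,\cdot)-u(s,\cdot)\|_{H_p^{\gamma-2\beta}}$, which together give
\[
|u|_{C^{\alpha-\frac{1}{p}}([0,\tau];C^{\nu}(\bR^d))}\leq N\,|u|_{C^{\alpha-\frac{1}{p}}([0,\tau];H_p^{\gamma-2\beta})}.
\]
Finally, for a general $\delta\in[0,\nu)$ I would pass from exponent $\nu$ to the smaller exponent $\nu-\delta$ using the elementary bound valid for bounded functions on $\bR^d$, namely $[w]_{C^{\nu-\delta}}\leq[w]_{C^{\nu}}+2|w|_{C}$ (split the difference quotient according to $|x-y|\leq1$ and $|x-y|>1$). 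Chaining the displayed inequalities, raising to the $p$-th power, and taking expectations then yields exactly \eqref{space-time_holder}.

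The index bookkeeping is routine; the one point deserving genuine care is the Zygmund--H\"older identification. The Sobolev embedding of Lemma \ref{prop_of_bessel_space} \eqref{sobolev-embedding} a priori lands in a Zygmund space, and one must confirm the target exponent is non-integral before replacing $\cC^{\nu}$ by $C^{\nu}$. Here this is automatic from $\nu\in(0,\frac12)$, which is precisely what makes the endpoint $\delta=0$ already admissible and reduces the remaining $\delta>0$ cases to the soft downward H\"older comparison above.
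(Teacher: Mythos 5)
Your proposal is correct and follows essentially the same route as the paper: compose the temporal embedding of Theorem \ref{embedding} \eqref{h_embedding} with the spatial Sobolev embedding of Lemma \ref{prop_of_bessel_space} \eqref{sobolev-embedding}. The only (cosmetic) difference is how $\delta$ is absorbed --- the paper lowers the Bessel index first, via $\|u\|_{H_p^{\gamma-2\beta-\delta}}\leq\|u\|_{H_p^{\gamma-2\beta}}$, and then embeds into $C^{\gamma-2\beta-\frac{d}{p}-\delta}$, whereas you embed into $C^{\nu}$ at $\delta=0$ and then use the elementary comparison $[w]_{C^{\nu-\delta}}\leq[w]_{C^{\nu}}+2|w|_{C}$; both steps are equally routine.
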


\begin{proof}
Note that Lemma \ref{prop_of_bessel_space} \eqref{sobolev-embedding} implies 
\begin{equation}\label{ineq:proof:embedding:cor}
|u|_{C^{\gamma-2\beta-\frac{d}{p}-\delta}(\bR^d)} \leq N\| u \|_{H_p^{\gamma-2\beta-\delta}}\leq N\| u \|_{H_p^{\gamma-2\beta}}.
\end{equation}
Applying inequality \eqref{ineq:proof:embedding:cor} 
to the definition of the norm $|\cdot|_{C^{\alpha-\frac{1}{p}}([0,\tau];C^{\gamma-2\beta-\frac{d}{p}-\delta}(\bR^d) )}$ and employing inequality \eqref{holder_embedding} prove \eqref{space-time_holder}. The corollary is proved.
\end{proof}
\vspace{2mm}
\mysection{Main results}
\label{sec:main}

In this section, we describe some major results on a solution to equation 
\begin{equation}
\label{targeteq}
du =(a^{ij}u_{x^ix^j}+b^iu_{x^i}+cu)dt+\sum_{k=1}^{\infty}\xi|u|^{1+\lambda}(f\ast e_k)dw^k_t,
\quad (t,x) \in (0,\infty) \times \bR^d;
\quad u(0,\cdot) = u_0(\cdot),
\end{equation}
where $\lambda \geq 0$ and $u_0\geq0$. The existence, uniqueness, and regularity of a solution in $\cH_{p,loc}^\gamma$ are proved and H\"older regularity of the solution is achieved. In the case of $\lambda = 0$, we consider more general equation than \eqref{targeteq}; see equation \eqref{targeteq:lipschitz}. 
Investigation on equation \eqref{targeteq:lipschitz} is employed to prove the super-linear case; see Section \ref{sec:main:superlinear}. 
Specifically, it turns out that $u\geq 0$, when $\lambda>0$, by maximum principle.
Then by taking integration with respect to $x$ in \eqref{targeteq} (at least formally), one can check that $\| u(t,\cdot) \|_{L_1}$ is a continuous local matingale. Since $\| u(t,\cdot) \|_{L_1}$ is nonnegative, its paths are bounded almost surely. 
If $\hat{\xi} := \xi|u|^{1+\lambda}/u$ ($0/0:=0$) agrees with the summability condition of Theorem \ref{case=0}, the unique solvability of \eqref{targeteq} can be obtained.

It should be remarked that more regularity of the solution can be obtained when the colored noise $F$ is simple in some sense; for example, the covariance $f$ is smooth or the spatial dimension $d$ is small. 
Furthermore, we discuss the range of $\lambda$, a sufficient condition for the unique solvability, in relation to $f$ and $d$.

\subsection{The Lipschitz case}
\label{sec:main:lipschitz}

In this section, consider an SPDE with Lipschtiz diffusion coefficients; for a bounded stopping time $\tau\leq T$,
\begin{equation}
\label{targeteq:lipschitz}
du =(a^{ij}u_{x^ix^j}+b^iu_{x^i}+cu)dt+\sum_{k=1}^{\infty}\xi\, h(u)\,(f\ast e_k)dw^k_t,
\quad (t,x) \in (0,\tau) \times \bR^d;
\quad u(0,\cdot) = u_0(\cdot),
\end{equation}
where $a^{ij},b^i,c$ and $\xi$ are $\cP\times\cB(\bR^d)$-measurable functions and $h(u)$ satisfies Lipschitz condition in $u$; for more details, see Assumptions \ref{ass coeff''}-\ref{assumption_for_h}. An estimate, existence, uniqueness, and regularity of a solution to \eqref{targeteq:lipschitz} in $\cH_p^\gamma$ are provided, and H\"older regularity of the solution is obtained; see Theorem \ref{case=0}. 
In particular, these results cover the case $\lambda= 0$ in \eqref{targeteq}. 

\vspace{4mm}

\begin{assumption} \label{ass coeff''}
\begin{enumerate}[(i)]
\item \label{ass coeff'' 1} 
The coefficients $a^{ij}$, $b^{i}$, $c$ are $\cP\times \cB(\bR^d)$-measurable.

\item \label{ass coeff'' 2} 
There exist constants $\kappa_0,K>0$ such that 
\begin{equation}
\label{ellipticity''}
 \kappa_0|\eta|^2\leq a^{ij}(t,x)\eta^i\eta^j\leq K|\eta|^2\quad \forall \omega,t,x,\eta
\end{equation}
and
\begin{equation}
\label{bounded''}
|a^{ij}(t,\cdot)|_{ C^2(\bR^d)}+|b^i(t,\cdot)|_{ C^2(\bR^d)} + |c(t,\cdot)|_{ C^2(\bR^d)}\leq K\quad \forall \omega,t.
\end{equation}
\end{enumerate}
\end{assumption}

\begin{assumption}[$\tau,\sigma$]\label{assumption_for_xi}
The  coefficient $\xi$ is $\mathcal{P}\times\mathcal{B}(\bR^d)$-measurable, and 
there exists a constant $K>0$ such that 
\begin{equation} \label{s_bound_of_xi}
\| \xi(t,\cdot) \|_{L_{\sigma}}\leq K\quad \forall\, \omega \in \Omega, \,\, t\in [0,\tau]\cap [0,\infty).
\end{equation}
\end{assumption}

\begin{assumption}[$\tau,p$] \label{assumption_for_h}
The function $h=h(\omega,t,x,u)$ is $\mathcal{P}\times\mathcal{B}(\bR^d)\times\mathcal{B}(\mathbb{R})$-measurable, $h(0)=h(\omega,t,x,0)\in\bL_p(\tau)$, and 
there exists a constant $K>0$ such that for any $\omega\in\Omega$,  $t\in[0,\tau]$, $u,v\in \bR$ and $x\in\bR^d$,
\begin{equation} \label{eq_assumption_for_h}
| h(t,x,u)-h(t,x,v) |  \leq K| u-v |.
\end{equation}

\end{assumption}

\begin{rem}
Obviously, $h(u) = u$ or $h(u) = |u|$ satisfy Assumption \ref{assumption_for_h}.
\end{rem}

\vspace{2mm}

Now, we describe the results on the Lipschitz case \eqref{targeteq:lipschitz}. 

\vspace{2mm}

\begin{thm} \label{case=0}
Let $\tau\leq T$ be a bounded stopping time.
Assume that $f$, $d$, $\gamma$, $s$, $p$, and $\sigma$ satisfy one of the following conditions;
\begin{enumerate}[(i)]
\item $f\in\cS'_n\cap\cS'_{nd}$, $d=1$, $\gamma\in[0,1/2)$, $s\in\left(\frac{1}{1-2\gamma},\infty\right]$, $p\in\left[\frac{2s}{s-1},\infty\right)$ $\left(\frac{2\infty}{\infty}:=2 \right)$, and $\sigma = 2s$;
\item $f\in\cS'_n\cap\cS'_{nd}$, $d\geq 2$, $\gamma\in[0,1)$, $s\in\left(\frac{d}{1-\gamma},\infty\right]$, $p\in\left[\frac{2s}{s-1},\infty\right)$ $\left(\frac{2\infty}{\infty}:=2 \right)$, $\sigma = 2s$, and 
\begin{equation}
\label{20.12.14.11.01}
\int_{|x|<1}|x|^{\frac{s(1-\gamma-d)}{s-1}}\mu(dx)<\infty\quad  \left((1-\gamma-d)\frac{\infty}{\infty}:= 1-\gamma-d \right),
\end{equation}
where $\mu$ is the corresponding measure of $f$;
\item $f\in \cS_n'\cap C_{nd}$, $d\geq 1$, $\gamma\in[0,1)$, $s\in\left(\frac{d}{1-\gamma},\infty\right]$, $p\in\left[\frac{2s}{s-1},\infty\right)$ $\left(\frac{2\infty}{\infty}:=2 \right)$, and $\sigma = s$.
\end{enumerate} 
Suppose that Assumptions  \ref{ass coeff''}, \ref{assumption_for_xi}$(\tau,\sigma)$ and \ref{assumption_for_h}$(\tau,p)$ hold. Then, for $u_0 \in U_p^\gamma$, equation \eqref{targeteq:lipschitz} with initial data $u(0,\cdot) = u_0$ has a unique solution $u$ in $\cH_p^{\gamma}(\tau)$. The solution satisfies
\begin{equation}
\label{0_estimate}
\|u\|_{\cH_p^{\gamma}(\tau)}\leq N(\|h(0)\|_{\bL_p(\tau)}+ \|u_0\|_{U_p^{\gamma}}),
\end{equation}
where $N=N(T, d,p,\gamma, \kappa_0,K)$. Furthermore, for any $\alpha,\beta$ and $\delta$ satisfying
\begin{equation*}
\frac{1}{p}<\alpha<\beta<\frac{\gamma}{2}-\frac{d}{2p}, \quad 0\leq \delta <\gamma-2\beta-\frac{d}{p},
\end{equation*}
we have
\begin{equation*}
| u |_{C^{\alpha-\frac{1}{p}}([0,T];C^{\gamma-2\beta-\frac{d}{p}-\delta}(\bR^d))}<\infty
\end{equation*}
for all $T<\infty$ (a.s.).
\end{thm}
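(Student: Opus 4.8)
\emph{Framework.} The plan is to place equation \eqref{targeteq:lipschitz} inside Krylov's $L_p$-theory of SPDEs and to deduce existence, uniqueness, and the a priori bound \eqref{0_estimate} from the general solvability theorem for the model equation together with the a priori estimate and the method of continuity. The deterministic operator $u\mapsto a^{ij}u_{x^ix^j}+b^iu_{x^i}+cu$ maps $\bH_p^{\gamma}(\tau)\to\bH_p^{\gamma-2}(\tau)$ and satisfies the standard ellipticity and boundedness requirements by Assumption \ref{ass coeff''}, so it is handled by the classical theory. The whole burden therefore falls on the stochastic coefficient operator
$$
B(u):=\big(\xi\,h(u)\,(f*e_k)\big)_{k\in\bN},
$$
which inherits predictability from Assumptions \ref{assumption_for_xi}--\ref{assumption_for_h}. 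The main task is to show that $B$ maps $\bH_p^{\gamma}(\tau)$ into $\bH_p^{\gamma-1}(\tau,l_2)$ and obeys, for every $\ep>0$, an estimate
$$
\|B(u)-B(v)\|_{\bH_p^{\gamma-1}(\tau,l_2)}\le \ep\,\|u-v\|_{\bH_p^{\gamma}(\tau)}+N_\ep\,\|u-v\|_{\bH_p^{\gamma-1}(\tau)},
$$
which is exactly the hypothesis that drives the continuity argument.

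\emph{Reduction to a spatial estimate.} Writing $v_k:=f*e_k$ and using $|h(u)-h(v)|\le K|u-v|$ (Assumption \ref{assumption_for_h}), everything reduces to a pointwise-in-$(\omega,t)$ bound of the form $\|\xi\,w\,v\|_{H_p^{\gamma-1}(l_2)}\le N\|w\|_{H_p^{\gamma_*}}$ with a strict gap $\gamma_*<\gamma$, applied to $w=u$, $w=u-v$, and $w=h(0)$. The strict gap then yields the $\ep$-splitting above after the multiplicative inequality of Lemma \ref{prop_of_bessel_space}(iii) and Young's inequality. Because $h$ is merely Lipschitz, $B$ gains no derivative, so the essential point is to exploit the negative smoothness index $\gamma-1<0$ (recall $\gamma\in[0,1)$) to absorb both the roughness of $w$ and the singularity of the noise.

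\emph{The heart: the $l_2$ symbol.} The decisive algebraic fact is that, since $\{e_k\}$ is a complete orthonormal basis of $\cH$ (Remark \ref{rem:ek:construction}), the off-diagonal sum $\sum_k v_k(z)\,v_k(z')$ equals, after interpretation through the corresponding measure $\mu$ of $f$, the covariance $f(z-z')$. When $f\in\cS'_n\cap C_{nd}$ (case (iii)) this gives the bounded constant $\sum_k v_k(x)^2=f(0)$, whence $\xi\,w\,v\in L_p(l_2)\hookrightarrow H_p^{\gamma-1}(l_2)$ and one only needs H\"older's inequality with $\|\xi\|_{L_\sigma}\le K$ together with the Sobolev embedding $H_p^{\gamma_*}\hookrightarrow L_q$, the range $s>\tfrac{d}{1-\gamma}$ being exactly what keeps the exponents admissible. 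In the genuinely distributional cases $\sum_k v_k^2$ is unbounded, so the diagonal is useless and cancellation must be kept: I would realize the negative-order norm through the Bessel potential $(1-\Delta)^{(\gamma-1)/2}=(1-\Delta)^{-(1-\gamma)/2}$, i.e.\ convolution with the kernel $R_{1-\gamma}$ of Remark \ref{R}, expand the square function into a double integral in which the $l_2$-sum collapses to $f(z-z')$, and apply H\"older in the spatial variables. The near-origin kernel bound $R_{1-\gamma}(y)\sim|y|^{1-\gamma-d}$ from \eqref{eqn 20.01.02}, raised to the H\"older-conjugate exponent $\tfrac{s}{s-1}$, produces precisely the weight $|x|^{s(1-\gamma-d)/(s-1)}$, so that finiteness becomes the local $\mu$-integrability \eqref{20.12.14.11.01} in the multi-dimensional case (ii); the one-dimensional case (i) is covered by the same scheme, the sharper range $\gamma<\tfrac12$, $s>\tfrac{1}{1-2\gamma}$ being tuned to control the corresponding integral. \textbf{This square-function/measure estimate is the main obstacle}: it is where the interplay between $d$, the covariance $f$, and the exponents $(\gamma,s,p)$ is pinned down, and everything else is bookkeeping.

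\emph{H\"older regularity.} Once $u\in\cH_p^{\gamma}(\tau)$ with \eqref{0_estimate} is established for every admissible $p$, the space-time H\"older statement follows at once from Corollary \ref{s.t. holder}: choosing $\alpha,\beta,\delta$ in the indicated ranges yields $\bE|u|^p_{C^{\alpha-1/p}([0,\tau];C^{\gamma-2\beta-d/p-\delta})}<\infty$, hence $|u|_{C^{\alpha-1/p}([0,T];C^{\gamma-2\beta-d/p-\delta})}<\infty$ almost surely; taking $\tau\equiv T$ and letting $T<\infty$ be arbitrary completes the proof.
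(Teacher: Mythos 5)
Your framework and your treatment of cases (i)--(ii) coincide with the paper's proof: the $\ep$-splitting hypothesis (the paper's Assumption \ref{ass g}) fed into Krylov's solvability theorem (the paper's Theorem \ref{key}, which also contains the stopping-time reduction $g\mapsto 1_{t\le\tau}g$ that you gloss over), with the splitting deduced from a spatial bound $\|\xi w\boldsymbol{v}\|_{H_p^{\gamma-1}(l_2)}\le N\|w\|_{L_p}$ via the Lipschitz property of $h$, Lemma \ref{prop_of_bessel_space}(iii) and Young's inequality. Your kernel computation for (i)--(ii) is exactly the paper's Lemma \ref{4_key}(a): Parseval turns the square function into $\|R_{1-\gamma}(x-\cdot)\xi w\|_{\cH}^2$, a double integral against $\mu$; H\"older with $r=\frac{s}{s-1}$ and the near-origin bound $|R^r_{1-\gamma}\ast R^r_{1-\gamma}(x)|\lesssim|x|^{s(1-\gamma-d)/(s-1)}$ (Lemma \ref{decay}) reduce finiteness to \eqref{20.12.14.11.01} in case (ii), while $s>\frac{1}{1-2\gamma}$ gives $R_{1-\gamma}\in L_{2r}(\bR)$ and a bounded convolution in case (i); the H\"older regularity via Corollary \ref{s.t. holder} is also the paper's. (You suppress the Minkowski step, which is where $p\ge\frac{2s}{s-1}$ enters, and the tempered-measure weights $(1+|x|^2)^{\pm k/2}$; those are indeed routine.)

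The genuine gap is case (iii). There you discard the Bessel potential by embedding $L_p(l_2)\hookrightarrow H_p^{\gamma-1}(l_2)$ and try to pay for $\xi$ with a Sobolev embedding on $w$, and the exponent arithmetic cannot close: H\"older $\|\xi w\|_{L_p}\le\|\xi\|_{L_s}\|w\|_{L_q}$ forces $\frac1q=\frac1p-\frac1s\ge 0$, i.e.\ $s\ge p$, which fails for most of the admissible range $p\in[\frac{2s}{s-1},\infty)$; and $H_p^{\gamma_*}\hookrightarrow L_q$ with this $q$ forces $\gamma_*\ge\frac ds$, so the strict gap $\gamma_*<\gamma$ needs $s>\frac d\gamma$ --- strictly stronger than the assumed $s>\frac{d}{1-\gamma}$ whenever $\gamma<\frac12$, and impossible when $\gamma=0$. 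Thus your claim that ``$s>\frac{d}{1-\gamma}$ is exactly what keeps the exponents admissible'' is false for this scheme; that hypothesis has a different job. The paper's Lemma \ref{4_key}(b) runs case (iii) through the \emph{same} kernel computation as (i)--(ii): since $f\in\cS_n'\cap C_{nd}$, the measure has bounded density (Remark \ref{191108_01}), so one bounds $f\le f(0)$ inside the double integral and applies H\"older, Minkowski (using $p\ge 2r$) and Young's convolution inequality, obtaining $N\|\xi\|_{L_s}\|R_{1-\gamma}\|_{L_r}\|w\|_{L_p}$; here $s>\frac{d}{1-\gamma}$ enters precisely as the condition $R_{1-\gamma}\in L_r$, and nothing ties $s$ to $p$. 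Your diagonal identity $\sum_k v_k(x)^2=f(0)$ is correct (Bochner: $\nu$ is finite), but to use it you must keep the integrability gain of $(1-\Delta)^{-(1-\gamma)/2}$: for instance $\|\xi w\boldsymbol{v}\|_{H_p^{\gamma-1}(l_2)}\le N\|\,|\xi w\boldsymbol{v}|_{l_2}\|_{L_{p_0}}$ with $\frac1{p_0}=\frac1p+\frac1s$, valid exactly when $R_{1-\gamma}\in L_r$, i.e.\ $s>\frac{d}{1-\gamma}$, after which H\"older gives $Nf(0)^{1/2}\|\xi\|_{L_s}\|w\|_{L_p}$; this repairs your case (iii) and recovers the paper's estimate.
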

\begin{proof}
See Section \ref{sec:lambda=0}.
\end{proof}

\begin{rem}

It should be remarked that in condition (i) of Theorem \ref{case=0}, conditions on $\mu$ is not imposed, such as \eqref{20.12.14.11.01}. Indeed, in the proof of Theorem \ref{case=0}, to obtain \eqref{0_estimate} with finite constant $N$, we need to show
\begin{equation}
\label{near_zero_mu_integrable_lipschitz_rmk}
\int_{|x|\leq 1} \left|\left(R^{\frac{s}{s-1}}_{1-\gamma}\ast R^{\frac{s}{s-1}}_{1-\gamma}\right)(x)\right| \mu(dx)<\infty,
\end{equation}
where $\mu$ is the corresponding measure of $f$ and $R_{1-\gamma}$ is the function introduced in Remark \ref{R}; see \eqref{near_zero_mu_integrable_lipschitz}.
In the case of $(i)$, since $R_{1-\gamma}\in L_{2r}(\bR)$, we have
\begin{equation}
\label{near_zero_mu_integrable_lipschitz_rmk_2}
\sup_{x\in\bR}\left|\left(R^{\frac{s}{s-1}}_{1-\gamma}\ast R^{\frac{s}{s-1}}_{1-\gamma}\right)(x)\right|\leq \|R_{1-\gamma}\|_{L_{2r}(\bR)}^{ 2r}<\infty.
\end{equation}
Thus, \eqref{191024_02} and \eqref{near_zero_mu_integrable_lipschitz_rmk_2} imply \eqref{near_zero_mu_integrable_lipschitz_rmk}; see \eqref{near_zero_mu_integrable_lipschitz_sup}. 

On the other hand, in the case of $(ii)$, by Lemma \ref{decay}, we only have
$$\left| \left(R^{\frac{s}{s-1}}_{1-\gamma}\ast R^{\frac{s}{s-1}}_{1-\gamma}\right)(x) \right| \leq N|x|^{\frac{s(1-\gamma-d)}{s-1}}.
$$
Therefore, condition \eqref{20.12.14.11.01} is required. For more information, we refer the reader to Remark 4 and Theorem 6 of \cite{ferrante2006spdes}.
\end{rem}

\begin{rem}
Note that Theorem \ref{case=0} holds for $s = \infty$ and $h(u) = |u|$, which is equivalent to equation \eqref{targeteq}
on $t\leq \tau$ with $\lambda = 0$.
\end{rem}

\vspace{4mm}
\subsection{The super-linear case}
\label{sec:main:superlinear}

This section provides unique solvability of \eqref{targeteq} and regularities of a solution to equation \eqref{targeteq}. In addition, maximal H\"older regularity is obtained.

Before we state the major results, 
assumptions on coefficients are introduced. Note that the coefficients can be dependent on $\omega,t,$ and $x$.

\begin{assumption}
\label{assumption_xi}
The coefficient $\xi$ is $\cP\times \cB(\bR^d)$-measurable and there exists $K > 0$ such that 
$$  \| \xi(t,\cdot) \|_{L_{\infty}}\leq K\quad\forall\omega,t.
$$
\end{assumption}

\begin{defn} 
\label{def:gamma0,1}
For $d \in \bN$, let
\begin{gather*}
\gamma_0=\gamma_0(d,\lambda):=\frac{1}{2}-2d\left(\lambda-\frac{1}{4d}\right)1_{\{\lambda>\frac{1}{4d}\}}, \\
\gamma_1=\gamma_1(d,\lambda):= \frac{1}{2}-d\left(\lambda-\frac{1}{2d}\right)1_{\{\lambda>\frac{1}{2d}\}}.
\end{gather*}
\end{defn}

Now, the unique solvability of equation \eqref{targeteq} in $\cH_{p,loc}^\gamma$ is presented. Besides, $L_p$ and H\"older regularities of the solution are obtained.

\begin{thm}
\label{main}
Suppose Assumptions \ref{ass coeff''} and \ref{assumption_xi} hold. 
The constants $\gamma_0$ and $\gamma_1$ are defined in Definition \ref{def:gamma0,1}. 
Assume that f, $d$, $\lambda$, $\gamma$, and $p$ satisfy one of the following conditions:

\begin{enumerate}[(i)]
\item  \label{main_thm_first_condi}
$f\in \cS'_{n}\cap\cS'_{nd}$, $d = 1$,  $0\leq\lambda<\frac{1}{2}$, $0<\gamma<\frac{1}{2}-\lambda$, and $p\in \left( \frac{3}{\gamma},\infty \right)$;

\item \label{main_thm_second_condi}
$f\in \cS'_{n}\cap\cS'_{nd}$, $d\geq2$, $0\leq\lambda<\frac{1}{2d}$, $0 < \gamma < \gamma_0$,  $p \in\left(\frac{d+2}{\gamma},\infty\right)$, and
\begin{equation} \label{eq.08.10.}
\int_{|x|<1}|x|^{\frac{1-\gamma-d}{1-2\lambda}}\mu(dx)<\infty,
\end{equation}
where $\mu$ is the corresponding measure of $f$;

\item \label{main_thm_third_condi}
$f\in \cS'_{n}\cap C_{nd}$, $d\geq1$, $0\leq \lambda<\frac{1}{d}$, $0 < \gamma < \gamma_1$, and $p \in\left(\frac{d+2}{\gamma},\infty\right)$.
\end{enumerate}
Then, for nonnegative $u_0$ is in $ U_p^{\gamma}\cap L_1(\Omega;L_1(\bR^d))$, equation \eqref{targeteq} with initial data $u(0,\cdot) = u_0$ has a unique solution $u$ in $\cH_{p,loc}^{\gamma}$.
In addition, for any $\alpha,\beta$ and $\delta$ satisfying
\begin{equation} \label{condi_const_Holder}
\frac{1}{p}<\alpha<\beta<\frac{\gamma}{2}-\frac{d}{2p}, \quad 0\leq \delta <\gamma-2\beta-\frac{d}{p},
\end{equation}
we have
\begin{equation}
\label{regularity of solution}
| u |_{C^{\alpha-\frac{1}{p}}([0,T];C^{\gamma-2\beta-\frac{d}{p}-\delta}(\bR^d))}<\infty
\end{equation}
for all $T<\infty$ (a.s.).
\end{thm}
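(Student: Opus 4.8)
The plan is to reduce \eqref{targeteq} to the Lipschitz framework of Theorem \ref{case=0} by absorbing the super-linear factor into the coefficient. Writing the diffusion term as $\xi|u|^{1+\lambda}=\hat\xi\,u$ with the \emph{effective coefficient} $\hat\xi:=\xi|u|^{\lambda}$, the equation formally takes the Lipschitz form \eqref{targeteq:lipschitz} with $h(u)=u$. The first task is to produce the two a priori bounds that make $\hat\xi$ admissible. By the maximum principle one shows $u\geq 0$ whenever $\lambda>0$, so $\hat\xi=\xi u^\lambda\geq0$ is well defined. Integrating the equation in $x$ shows that $\|u(t,\cdot)\|_{L_1}$ is, modulo a drift that integration by parts against the adjoint of the elliptic operator bounds linearly in $\|u\|_{L_1}$, a nonnegative continuous (local) martingale; continuity together with nonnegativity then forces $M:=\sup_{t\leq T}\|u(t,\cdot)\|_{L_1}<\infty$ almost surely for every $T<\infty$.

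The crucial observation is the choice $\sigma=1/\lambda$, which gives $\lambda\sigma=1$ and hence, by Assumption \ref{assumption_xi},
\begin{equation*}
\|\hat\xi(t,\cdot)\|_{L_\sigma}=\|\xi\,u^\lambda(t,\cdot)\|_{L_{1/\lambda}}\leq\|\xi\|_{L_\infty}\|u(t,\cdot)\|_{L_1}^{\lambda}\leq KM^\lambda.
\end{equation*}
Thus the a priori $L_1$-bound supplies exactly the $L_\sigma$-summability demanded by Assumption \ref{assumption_for_xi}$(\tau,\sigma)$. Matching $\sigma=1/\lambda$ against the admissible ranges of $s$ in Theorem \ref{case=0} (recall $\sigma=2s$ in the distributional cases and $\sigma=s$ in the smooth case) is what produces the stated ranges $\lambda<\tfrac12$ for $d=1$, $\lambda<\tfrac1{2d}$ for $d\geq2$, and $\lambda<\tfrac1d$ for smooth $f$, reproduces the thresholds $\gamma_0,\gamma_1$ of Definition \ref{def:gamma0,1}, and turns condition \eqref{20.12.14.11.01} into \eqref{eq.08.10.}.

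The analytic core is to untangle the self-reference: $\hat\xi$ depends on the unknown $u$, and merely being in $L_\infty$ (Assumption \ref{assumption_xi}) is \emph{not} enough to invoke Theorem \ref{case=0}, since that theorem needs an $L_\sigma$-coefficient and $\xi\notin L_\sigma$ on $\bR^d$. I would resolve this by truncation and localization. Replacing $u^\lambda$ by $(u\wedge n)^\lambda$ makes the cut-off diffusion $\xi(u\wedge n)^\lambda u$ globally Lipschitz in $u$; a fixed-point argument in which the effective coefficients stay in a fixed $L_\sigma$-ball of radius $KM^\lambda$ by the bound above yields solutions $u_n\in\cH_p^\gamma(\tau)$ obeying \eqref{0_estimate} with a constant \emph{uniform in} $n$ (as $h(0)=0$ and the coefficient's $L_\sigma$-norm is controlled by $M$). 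Defining stopping times $\tau_n$ as the first time a norm of $u_n$ reaches $n$, on $[0,\tau_n]$ the cut-off and original equations coincide; the uniform $\cH_p^\gamma$-estimate together with the Hölder embedding of Corollary \ref{s.t. holder} prevents finite-time blow-up and gives $\tau_n\uparrow\infty$, producing $u\in\cH_{p,loc}^\gamma$. Uniqueness is handled in the same spirit: two solutions are nonnegative with a.s.\ bounded $L_1$-norms, so their effective coefficients are admissible and Theorem \ref{case=0} applied to the difference forces them to agree on each $[0,\tau_n]$.

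The main obstacle is precisely this circular coupling together with the $L_\infty$-versus-$L_\sigma$ gap: one cannot feed $\xi\in L_\infty$ into the Lipschitz theory directly, and the only route to $L_\sigma$-integrability of the diffusion is through the factor $u^\lambda$, whose control rests in turn on the nonnegativity and the $L_1$ local-martingale structure of the solution. Once $u\in\cH_{p,loc}^\gamma$ is secured with the uniform estimate, the asserted space--time Hölder regularity \eqref{regularity of solution} for the stated $\alpha,\beta,\delta$ follows immediately by applying Corollary \ref{s.t. holder} on each $[0,\tau_n]$.
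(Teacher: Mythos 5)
Your strategy is the paper's own: truncate the nonlinearity so as to land in the Lipschitz theory of Theorem \ref{case=0}, use the maximum principle (Theorem \ref{max_principle}) for nonnegativity, extract an a priori $L_1$-bound from the martingale structure, and convert it through the exponent $\sigma=1/\lambda$ into $L_\sigma$-admissibility of the effective coefficient $\xi|u|^{\lambda}$; your matching of $\sigma=2s$ (resp.\ $\sigma=s$) with $s=1/(2\lambda)$ (resp.\ $s=1/\lambda$) is exactly how the paper obtains the ranges of $\lambda$, the thresholds $\gamma_0,\gamma_1$, and condition \eqref{eq.08.10.}. There is, however, one genuine gap in how you invoke Theorem \ref{case=0}. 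Assumption \ref{assumption_for_xi}$(\tau,\sigma)$ requires $\|\xi(t,\cdot)\|_{L_\sigma}\leq K$ for \emph{all} $\omega$ and $t\leq\tau$ with a \emph{deterministic} constant $K$, and the constant $N$ in \eqref{0_estimate} depends on this $K$. Your ``fixed $L_\sigma$-ball of radius $KM^{\lambda}$'' has a \emph{random} radius, since $M=\sup_{t\leq T}\|u(t,\cdot)\|_{L_1}$ is a random variable --- and worse, $M$ is defined through the solution $u$ that does not yet exist; the $L_1$-bound must be proved for the approximations, uniformly in the truncation level. As stated, the step ``yields solutions $u_n$ obeying \eqref{0_estimate} with a constant uniform in $n$'' does not go through.

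The repair is the paper's two-layer localization, which a single family of stopping times cannot replace: besides the sup-norm times $\tau_m^R$ (on which the truncated and original equations coincide, but which give no control of $\|\xi|u_m|^{\lambda}\|_{L_\sigma}$ on the unbounded domain $\bR^d$), one needs the $L_1$-based times $\tau_m(S)$ of \eqref{stopping_time_tau_m_s}, on which the effective coefficient $\xi_m$ satisfies the \emph{deterministic} bound $\|\xi_m(t,\cdot)\|_{L_\sigma}\leq K S^{\lambda}$, so that Theorem \ref{case=0} yields $\|u_m\|_{\cH_p^{\gamma}(\tau_m(S)\wedge T)}\leq N_S\|u_0\|_{U_p^{\gamma}}$ with $N_S$ independent of $m$. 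Non-explosion then follows from Chebyshev's inequality and the uniform bound $\bE\sup_{t\leq T}\|u_m(t,\cdot)\|_{L_1}^{1/2}\leq N_1$ (whose rigorous proof needs the device $\psi_k(x)=1/\cosh(|x|/k)$ of Lemma \ref{Aux. function} together with the factor $e^{-4Kt}$, since the constant function $1$ is not an admissible test function and the drift must be turned into a supermartingale term), giving $\bP\bigl(\sup_{t\leq T}\sup_{x}|u_m(t,x)|\geq R\bigr)\leq N_S R^{-1}\|u_0\|_{U_p^{\gamma}}^{p}+N_1 S^{-1/2}$ and the double limit $R\to\infty$, then $S\to\infty$, as in Lemma \ref{Non_explosion}. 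Finally, your uniqueness step should be routed through the truncated equation of Lemma \ref{Lem_5_1}: two solutions $u$ and $v$ carry \emph{different} effective coefficients $\xi|u|^{\lambda}$ and $\xi|v|^{\lambda}$, so one cannot ``apply Theorem \ref{case=0} to the difference''; instead, being nonnegative and bounded by $n$ up to the localizing time, both solve the \emph{same} Lipschitz (truncated) equation, whose uniqueness then forces $u=v$.
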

\begin{proof}
In the case of $\lambda = 0$, we use Theorem \ref{case=0}. Let $\tau$ be a bounded stopping time. 
Note that the conditions in Theorem \ref{main} with $\lambda = 0$ implies those in Theorem \ref{case=0} for $s = \infty$, and thus there exists 
$u = u_\tau(t,x)\in \cH_{p}^\gamma(\tau)$. 
Similarly, for a stopping time $\tilde\tau$ such that $\tau\leq \tilde\tau$, we can find $\hat{u}\in \cH_{p}^\gamma(\tilde\tau)$. From the uniqueness of solutions in $\cH_p^\gamma(\tau)$, we have $\hat{u} = u$ in $\cH_p^\gamma(\tau)$. Thus, by the definition of $\cH_{p,loc}^\gamma$, the Lipschitz case is proved. 
It should be remarked that the proof of the Lipschtiz case does not require the condition nonnegativity of the initial data $u_0$.

For the case of $\lambda > 0$, see Section \ref{sec:superlinear}. 
\end{proof}

\begin{rem}
The range of $\lambda$ in each condition of Theorem \ref{main} is a sufficient condition for the unique solvability. More details on the range of $\lambda$ are discussed in Section \ref{sec:superlinear}; see Remark \ref{rmk:range_of_lambda}.
\end{rem}

\begin{rem}
Due to the assumptions on $\gamma$ and $p$, there always exist $\alpha$ and $\beta$ satisfying inequality \eqref{condi_const_Holder}. Besides, the assumptions make the number $\gamma-2\beta-\frac{d}{p}-\delta$ in \eqref{regularity of solution} positive.
\end{rem}

\begin{rem}
By \eqref{regularity of solution}, H\"older regularity of the solution is obtained and it varies according to the choice of $\alpha,\beta,\gamma$ and $p$. For example, with sufficiently small $\ep>0$, if $\alpha$ and $\beta$ close enough to $1/p$ and $\delta = 0$, then
$$ \sup_{t\in[0,T]} | u(t,\cdot) |_{C^{\gamma-\frac{d+2}{p}-\ep}} <\infty,
$$
for all $T<\infty$ almost surely.
On the other hand, if $\alpha$ and $\beta$ close enough to $\frac{\gamma}{2}-\frac{d}{2p}$ and $\delta = 0$, then
$$ \sup_{x\in\bR^d} |u(\cdot,x) |_{C^{\frac{\gamma}{2}-\frac{d+2}{2p}-\ep}} <\infty,
$$
for all $T<\infty$ almost surely. Note that the solution $u$ depends not only on $(\omega,t,x)$ but also on $(\gamma,p)$.
\end{rem}

\begin{example}[\textbf{Riesz kernel}]

Let $f_{\alpha}(x)=|x|^{-\alpha}$ for some $\alpha\in (0,d)$. Since the covariance $f_{\alpha}\in\cS_n'\cap \cS'_{nd}$, Theorem \ref{main} can be applied.
\begin{enumerate}[(i)]
\item
If $d=1$ and $\lambda\in [0,1/2)$, Theorem \ref{main} holds for any $\alpha\in(0,1)$.

\item \label{ex:riesz:2}
If $d\geq2$ and $\lambda\in [0,\frac{1}{2d})$, Theorem \ref{main} holds for any 
\begin{equation} \label{example:condi}
\alpha \in \left(0,\frac{1-\gamma-2\lambda d}{1-2\lambda}\right).
\end{equation}

To see \eqref{ex:riesz:2}, note that the corresponding measure of $f_{\alpha}$ is $\mu(dx) = |x|^{-\alpha}dx$ (e.g. \cite{gel2014generalized}). 
To satisfy inequality \eqref{eq.08.10.}, we need $\frac{1-\gamma-d}{1-2\lambda}-\alpha+d>0$, which implies inequality \eqref{example:condi}.

\end{enumerate}
\end{example}

\begin{example}[\textbf{Gaussian kernel}] Let $f(x)=e^{-c|x|^2}$
. Since $f\in\cS'_n\cap C_{nd}$, Theorem \ref{main} (iii) holds for any Gaussian kernels.
\end{example}
\vspace{3mm}

To obtain maximal H\"older regularity of the solution by extending Theorem \ref{main}, the following theorem is required. 

\begin{thm}
\label{p_consistent}
Let $\gamma_0$, $\gamma_1$, $d$, $\lambda$, $\gamma$, $p$, $f$, and $u_0$ satisfy the conditions in Theorem \ref{main} and $u\in\cH_{p,loc}^\gamma$ be the solution. Then if $q > p$ and $u_0\in U_q^{\gamma}\cap L_1(\Omega;L_1(\bR^d))$, the solution $u$ also belongs to $\cH_{q,loc}^\gamma$. 
\end{thm}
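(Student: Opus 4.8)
The plan is to reduce equation \eqref{targeteq} to the linear theory of Theorem \ref{case=0} by freezing the nonlinearity along the given solution, and then to bootstrap the integrability exponent from $p$ to $q$. Since $u\in\cH_{p,loc}^\gamma$ is already known and, for $\lambda>0$, nonnegative by the maximum principle, I set $\hat\xi:=\xi|u|^\lambda$ so that $u$ solves the \emph{linear} equation
$$
du=(a^{ij}u_{x^ix^j}+b^iu_{x^i}+cu)\,dt+\sum_{k=1}^\infty \hat\xi\,u\,(f\ast e_k)\,dw_t^k,\qquad u(0,\cdot)=u_0,
$$
which is of the form \eqref{targeteq:lipschitz} with the Lipschitz choice $h(v)=v$ and with $\hat\xi$ in place of $\xi$. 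The whole argument then becomes a statement about consistency across integrability exponents for this frozen linear equation.

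Next I would localize. By the a.s.\ H\"older regularity \eqref{regularity of solution}, for every $T<\infty$ one has $\sup_{t\le T}\|u(t,\cdot)\|_{L_\infty}<\infty$ a.s.; hence, intersecting $\inf\{t\ge0:\|u(t,\cdot)\|_{L_\infty}\ge n\}$ with a localizing sequence for $\cH_{p,loc}^\gamma$, I obtain bounded stopping times $\tau_n\uparrow\infty$ with $u\in\cH_p^\gamma(\tau_n)$. On $\opar0,\tau_n\cbrk$ the frozen coefficient is bounded, $\|\hat\xi(t,\cdot)\|_{L_\infty}\le Kn^\lambda$, so Assumption \ref{assumption_for_xi}$(\tau_n,\infty)$ holds with $s=\infty$, $\sigma=\infty$. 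I would then verify that the hypotheses of Theorem \ref{case=0} in the case $s=\infty$ are implied by those of Theorem \ref{main}; for instance the required integrability $\int_{|x|<1}|x|^{1-\gamma-d}\mu(dx)<\infty$ (the $s=\infty$ form of \eqref{20.12.14.11.01}) follows from \eqref{eq.08.10.}, because $1-\gamma-d<0$ and $0<1-2\lambda\le1$ give $\tfrac{1-\gamma-d}{1-2\lambda}\le 1-\gamma-d$. Granting these verifications, Theorem \ref{case=0} applied at exponent $q$ (legitimate since $q>p$, $u_0\in U_q^\gamma$, and $h(0)=0\in\bL_q$) produces a unique $\bar u\in\cH_q^\gamma(\tau_n)$ solving the frozen linear equation on $\opar0,\tau_n\cbrk$. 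It then suffices to identify $\bar u=u$, since that gives $u\in\cH_q^\gamma(\tau_n)$ for every $n$, and $\tau_n\uparrow\infty$ yields $u\in\cH_{q,loc}^\gamma$.

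The main obstacle is precisely this identification, because $u$ and $\bar u$ a priori live in the different spaces $\cH_p^\gamma(\tau_n)$ and $\cH_q^\gamma(\tau_n)$, and on $\bR^d$ there is no inclusion $H_q^\gamma\hookrightarrow H_p^\gamma$, so one cannot simply form $u-\bar u$ and invoke the uniqueness of Theorem \ref{case=0}. To circumvent this I would place both solutions in a common space. Both $u$ and $\bar u$ are nonnegative (maximum principle, as $u_0\ge0$), both are uniformly bounded in $L_\infty$ on $\opar0,\tau_n\cbrk$ (by their respective H\"older embeddings, Corollary \ref{s.t. holder}, which applies to $\bar u$ since $q>p>\tfrac{d+2}{\gamma}$ forces $\gamma>\tfrac{d+2}{q}$), and both are uniformly bounded in $L_1$ there (nonnegativity together with a Gronwall estimate for $\bE\|\cdot(t,\cdot)\|_{L_1}$ coming from the bounded drift coefficients, which is where the hypothesis $u_0\in L_1(\Omega;L_1(\bR^d))$ enters). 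Interpolating, $u(t,\cdot),\bar u(t,\cdot)\in L_1\cap L_\infty\subset L_2$ uniformly, with enough fractional regularity to place $u,\bar u\in\cH_2^{\gamma'}(\tau_n)$ for some small $\gamma'\in(0,\gamma)$.

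Finally, since the frozen equation has bounded coefficients, its solution is unique in $\cH_2^{\gamma'}(\tau_n)$ by Theorem \ref{case=0} at $p=2$ (with $s=\sigma=\infty$), whence $u=\bar u$ and the proof concludes as above. I expect the genuinely delicate points to be the uniform $L_1$ control of $\bar u$ in the presence of variable coefficients $a^{ij},b^i,c$ (where the naive conservation of $\|\cdot\|_{L_1}$ fails and must be replaced by a Gronwall bound) and the precise choice of $\gamma'$ making the common membership $u,\bar u\in\cH_2^{\gamma'}(\tau_n)$ rigorous; everything else is a routine check that the $s=\infty$ instance of Theorem \ref{case=0} transfers the integrability from $p$ to $q$.
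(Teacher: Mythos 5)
Your overall strategy---freezing the coefficient multiplicatively as $\hat\xi=\xi|u|^\lambda$ and then identifying the two solutions in a common space of \emph{lower} integrability---is genuinely different from the paper's, and you correctly isolate the crux (no inclusion between $\cH_p^\gamma$ and $\cH_q^\gamma$). But your resolution of that crux has real gaps. First, the assertion that the uniform $L_1\cap L_\infty$ bounds place $u,\bar u\in\cH_2^{\gamma'}(\tau_n)$ ``with enough fractional regularity'' is unsupported as stated: membership in $L_1\cap L_\infty$ carries no Sobolev smoothness at all, so the fractional regularity must be imported from the known bounds $u\in\bH_p^\gamma(\tau_n)$, $\bar u\in\bH_q^\gamma(\tau_n)$ via the multiplicative inequality of Lemma \ref{prop_of_bessel_space}(iii) (with some $p_0\in(1,2)$ and $\gamma'=\kappa\gamma$), and on top of that $\cH_2^{\gamma'}$-membership requires verifying $\mathbb{D}u\in\bH_2^{\gamma'-2}(\tau_n)$, $\mathbb{S}u\in\bH_2^{\gamma'-1}(\tau_n,l_2)$ (Lemma \ref{4_key} at exponent $2$, with its $\mu$-integrability hypothesis rechecked at $\gamma'$), and $u_0\in U_2^{\gamma'}$---this last point needs its own H\"older-in-$\omega$ argument starting from $u_0\in U_p^\gamma\cap L_1(\Omega;L_1(\bR^d))$ and is nowhere addressed. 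Second, the uniform $L_1$ control of the auxiliary solution $\bar u$ is not a routine Gronwall bound: it requires nonnegativity of $\bar u$ (maximum principle for the frozen equation) plus the test-function/supermartingale machinery of Lemmas \ref{Aux. function} and \ref{Non_explosion}, i.e., reproducing one of the paper's harder lemmas for a second process; similarly, Corollary \ref{s.t. holder} gives only a.s.\ finiteness of $\sup_{t\le\tau_n,x}|\bar u|$, not a uniform bound, so yet another layer of localization adapted to $\bar u$ is needed. In short, the two steps you defer as ``delicate'' are exactly where the theorem's difficulty sits, and they are not carried out.

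For comparison, the paper kills the common-space problem at the root by freezing the noise \emph{additively}: it treats $\xi|u|^{1+\lambda}\boldsymbol{v}$ as a given free term rather than keeping a multiplicative structure $\hat\xi\,\bar u\,\boldsymbol{v}$. The key elementary observation is that $\int_0^{\tau_n}\int_{\bR^d}|u|^q\,dx\,dt\le\bigl(\sup_{t\le\tau_n,x}|u|\bigr)^{q-p}\int_0^{\tau_n}\int_{\bR^d}|u|^p\,dx\,dt<\infty$ a.s., so stopping times $\tau_{n,k}\uparrow\tau_n$ make $u\in\bL_q(\tau_{n,k})$ directly; the equation itself then upgrades this to $u\in\cH_q^0(\tau_{n,k})$, Theorem \ref{case=0} produces $\hat u\in\cH_q^\gamma(\tau_{n,k})$ solving the additive-noise equation, and the difference $u-\hat u$ lies in the \emph{single} space $\cH_q^0(\tau_{n,k})$ and satisfies a deterministic parabolic equation with zero data, hence vanishes by Theorem \ref{key}. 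No nonnegativity, no $L_1$ theory, no interpolation down to exponent $2$, and no second run of Lemma \ref{Non_explosion} are needed. If you want to salvage your scheme, the quickest fix is to adopt this additive freezing inside it: with additive noise the identification happens at exponent $q$ and smoothness $0$, precisely where your sup-norm localization already places $u$.
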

\begin{proof}
See Section \ref{sec:superlinear}.
\end{proof}

Consequently, combining Theorems \ref{main} and \ref{p_consistent} leads to maximal H\"older regularity of the solution to \eqref{targeteq}:

\begin{corollary}
\label{190916_01}
Let $\gamma_0$ and $\gamma_1$ be defined in Definition \ref{def:gamma0,1}. 
Suppose that $d$, $\lambda$, $\gamma^\ast$, and $f$ satisfy one of the following conditions;

\begin{enumerate}[(i)]
\item  \label{max_holder_embedding_1}
$d = 1$, $0\leq\lambda<\frac{1}{2}$, and $\gamma^\ast=\frac{1}{2}-\lambda$;

\item  \label{max_holder_embedding_2}
$d\geq2$, $0\leq\lambda<\frac{1}{2d}$,  $0 < \gamma^{*} < \gamma_0$, and
\begin{equation*} 
\int_{|x|<1}|x|^{\frac{1-\gamma^*-d}{1-2\lambda}}\mu(dx)<\infty,
\end{equation*}
where $\mu$ is the corresponding measure of $f$;
 
\item  \label{max_holder_embedding_3}
$d\geq1$, $0\leq \lambda<\frac{1}{d}$,  $\gamma^\ast = \gamma_1$, and $f\in\cS'_n\cap C_{nd}$. 
\end{enumerate}
Suppose nonnegative $u_0$ is in $U_p^{\gamma^*}\cap L_1(\Omega;L_1(\bR^d))$ for any $p \in\left(\frac{d+2}{\gamma^*},\infty\right)$.  
Then 
for any $\gamma \in (0, \gamma^*)$ and $p \in\left(\frac{d+2}{\gamma},\infty\right)$, 
equation \eqref{targeteq} with $u(0,\cdot) = u_0$ has a unique solution $u_p$ in 
$\cH_{p,loc}^{\gamma}$.
In addition, for fixed $\gamma\in (0,\gamma^*)$, 
all the solutions $u_p$ coincide with $u$ for all $p \in\left(\frac{d+2}{\gamma},\infty\right)$. 
Furthermore, 
for any $T>0$ and  $0<\varepsilon<\frac{\gamma^\ast}{2}$, almost surely,
\begin{equation}
\label{Holder_d=1}
\sup_{t\in[0,T]}|u(t,\cdot)|_{C^{\gamma^*-\varepsilon}(\bR^d)}<\infty\quad \text{and} \quad\sup_{x\in\bR^d}|u(\cdot,x)|_{C^{\frac{\gamma^*}{2}-\varepsilon}([0,T])}<\infty.
\end{equation}

\end{corollary}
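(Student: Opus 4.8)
The plan is to assemble a single solution process $u$ that lies in every solution space $\cH_{p,loc}^\gamma$ permitted by Theorem \ref{main}, and then to read off the two Hölder exponents in \eqref{Holder_d=1} by driving the free parameters in the embedding \eqref{regularity of solution} to their extremal values.

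First I would fix $\gamma\in(0,\gamma^*)$ and verify that each admissible $p\in\left(\frac{d+2}{\gamma},\infty\right)$ meets the hypotheses of Theorem \ref{main}. Since $\gamma<\gamma^*$, we have $H_p^{\gamma^*-2/p}\hookrightarrow H_p^{\gamma-2/p}$, so $U_p^{\gamma^*}\subseteq U_p^{\gamma}$ and the standing assumption $u_0\in U_p^{\gamma^*}\cap L_1(\Omega;L_1(\bR^d))$ yields $u_0\in U_p^{\gamma}\cap L_1(\Omega;L_1(\bR^d))$. In case \eqref{max_holder_embedding_2} I would also observe that the integral condition \eqref{eq.08.10.} for $\gamma$ follows from the one assumed for $\gamma^*$: because $1-2\lambda>0$ and $\gamma<\gamma^*$, the exponent $\frac{1-\gamma-d}{1-2\lambda}$ exceeds $\frac{1-\gamma^*-d}{1-2\lambda}$, so on $\{|x|<1\}$ one has $|x|^{(1-\gamma-d)/(1-2\lambda)}\le |x|^{(1-\gamma^*-d)/(1-2\lambda)}$ and the integral remains finite. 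Hence Theorem \ref{main} delivers a unique $u_p\in\cH_{p,loc}^{\gamma}$ for each such $p$.

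Next I would glue these solutions into one. For the $p$-direction with $\gamma$ fixed: given $p<q$, Theorem \ref{p_consistent} upgrades $u_p$ to $\cH_{q,loc}^{\gamma}$, whereupon uniqueness in $\cH_{q,loc}^{\gamma}$ forces $u_p=u_q$, so $u_p$ is independent of $p$; call it $u^{(\gamma)}$. For the $\gamma$-direction: if $\gamma_1<\gamma_2<\gamma^*$, choose any $p$ with $p>\frac{d+2}{\gamma_1}>\frac{d+2}{\gamma_2}$ so that both solutions exist, and note $\cH_{p,loc}^{\gamma_2}\subseteq\cH_{p,loc}^{\gamma_1}$; thus $u^{(\gamma_2)}$ is also a solution in $\cH_{p,loc}^{\gamma_1}$, and uniqueness gives $u^{(\gamma_1)}=u^{(\gamma_2)}$. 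Therefore all the $u_p$ coincide with a single process $u$ belonging to $\cH_{p,loc}^{\gamma}$ for every admissible pair $(\gamma,p)$.

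Finally I would extract \eqref{Holder_d=1} from \eqref{regularity of solution} by optimizing $\alpha,\beta,\delta,\gamma,p$; the range $\frac{1}{p}<\alpha<\beta<\frac{\gamma}{2}-\frac{d}{2p}$ is nonempty precisely because $p>\frac{d+2}{\gamma}$. For the spatial estimate, take $\delta=0$ and $\beta\downarrow\frac1p$, so the spatial exponent $\gamma-2\beta-\frac dp$ tends to $\gamma-\frac{d+2}{p}$; letting $\gamma\uparrow\gamma^*$ and $p\uparrow\infty$ makes it exceed $\gamma^*-\varepsilon$, giving $\sup_{t\le T}|u(t,\cdot)|_{C^{\gamma^*-\varepsilon}(\bR^d)}\le |u|_{C^{\alpha-1/p}([0,T];C^{\gamma-2\beta-d/p}(\bR^d))}<\infty$. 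For the temporal estimate, take $\delta=0$, $\beta\uparrow\frac{\gamma}{2}-\frac{d}{2p}$ (so that the spatial exponent stays positive) and $\alpha\uparrow\beta$, so that $\alpha-\frac1p$ approaches $\frac{\gamma}{2}-\frac{d+2}{2p}$; sending $\gamma\uparrow\gamma^*$ and $p\uparrow\infty$ pushes it past $\frac{\gamma^*}{2}-\varepsilon$, and since $C^{s}(\bR^d)$ with $s>0$ embeds into the sup norm we obtain $\sup_x|u(\cdot,x)|_{C^{\gamma^*/2-\varepsilon}([0,T])}\le |u|_{C^{\alpha-1/p}([0,T];C^{\gamma-2\beta-d/p}(\bR^d))}<\infty$. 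The main obstacle is the gluing step: one must ensure that for each fixed $\varepsilon$ the already-identified single process $u$ genuinely belongs to the space whose embedding furnishes the targeted exponent, so that the solutions invoked for the spatial and the temporal bounds are one and the same.
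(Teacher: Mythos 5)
Your proposal is correct and follows essentially the same route as the paper: verify the hypotheses of Theorem \ref{main} for each admissible $(\gamma,p)$ (including the monotonicity of the moment condition \eqref{eq.08.10.} in $\gamma$ on $\{|x|<1\}$), invoke Theorem \ref{p_consistent} plus uniqueness to identify the solutions across $p$, and then obtain \eqref{Holder_d=1} from \eqref{regularity of solution} by sending $\beta$ toward $1/p$ (spatial estimate) or $\alpha,\beta$ toward $\frac{\gamma}{2}-\frac{d}{2p}$ (temporal estimate) with $\gamma\uparrow\gamma^*$ and $p\to\infty$. Your additional gluing in the $\gamma$-direction, via the inclusion $\cH_{p,loc}^{\gamma_2}\subseteq\cH_{p,loc}^{\gamma_1}$ and uniqueness, is a worthwhile explicit justification of a point the paper leaves implicit when it takes $\gamma$ close to $\gamma^*$ in the last step.
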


\begin{proof}
To see the first assertion, we use Theorem \ref{main}. Let $\gamma \in (0, \gamma^*)$ and $p > \frac{d+2}{\gamma}$. Since $U_p^{\gamma^*}\subseteq U_p^{\gamma}$, we have $u_0 \in U_p^{\gamma}\cap L_1(\Omega;L_1(\bR^d))$. Besides, in the case of \eqref{max_holder_embedding_2}, observe that $|x|^{\frac{1-\gamma-d}{1-2\lambda}}1_{|x|<1}\in L_1(\mu)$ for any $\gamma\in(0,\gamma^*]$. Thus, Theorem \ref{main} implies the first assertion. The second assertion follows from Theorem \ref{p_consistent}. For the last assertion, take $\gamma$ close enough to $\gamma^*$ and $p>\frac{d+2}{\gamma}$. By \eqref{regularity of solution}, for any $\alpha, \beta$ such that $\frac{1}{p}<\alpha<\beta<\frac{\gamma}{2}-\frac{d}{2p}$, we have 
$$
| u |_{C^{\alpha-\frac{1}{p}}([0,T];C^{\gamma-2\beta-\frac{d}{p}}(\bR^d))}<\infty \quad \forall T < \infty~ (a.s.).
$$
Fix $T>0$ and $\ep\in(0,\frac{\gamma^*}{2})$. By taking $\alpha,\beta$ close to $1/p$  and sufficiently large $p$, H\"older regularity with respect to spatial variable is obtained. On the other hand, by choosing $\alpha,\beta$ close to $\frac{\gamma}{2}-\frac{d}{2p}$  and sufficiently large $p$, we get H\"older regularity in time variable. Therefore, we have \eqref{Holder_d=1}. The corollary is proved.
\end{proof}

\mysection{Proof for case with the general Lipschitz diffusion coefficient } 
\label{sec:lambda=0}

In this section, we provide a proof of Theorem \ref{case=0}, which presents existence, uniqueness, and $L_p$-estimate of a solution to Lipschitz case \eqref{targeteq:lipschitz}. For readers' convenience, we recall equation \eqref{targeteq:lipschitz}:
\begin{equation} \label{targeteq:lipschitz:1}
du=(a^{ij}u_{x^ix^j}+b^iu_{x^i}+cu) \,dt+\sum_{k=1}^{\infty}\xi h(u)(f\ast e_k)dw^k_t, 
\quad 0<t\leq \tau;
\quad u(0,\cdot)=u_0,
\end{equation}
where $h(u)=h(\omega,t,x,u)$ satisfies Lipschitz condition in $u$; see Assumption \ref{assumption_for_h}($\tau,p$).

To prove Theorem \ref{case=0}, we employ Theorem \ref{key} that provides a unique solvability and estimate of a solution to the linear equation
\begin{equation}
\label{quasi-linear1}
du=(a^{ij}u_{x^ix^j}+b^iu_{x^i}+cu)\, dt+ \sum_{k=1}^{\infty}g^k(u)dw^k_t, 
\quad 0<t\leq \tau;
\quad u(0,\cdot)=u_0,
\end{equation}
where $\tau\leq T$ is a bounded stopping time. 
Here, the diffusion coefficient $g^k(u)=g^k(\omega,t,x,u)$ satisfies Assumption \eqref{ass g} $(\tau)$. 
To show that the diffusion coefficients for the Lipschitz case \eqref{targeteq:lipschitz:1} satisfy Assumption \eqref{ass g} $(\tau)$, we employ Lemma \ref{4_key}, which presents an estimate for the coefficients. 
Detailed proof for Theorem \ref{case=0} is described at the end of this section. 
\\

\begin{assumption}[$\tau$] \label{ass g}
\begin{enumerate}[(i)]
\item  For any $u\in H_p^{\gamma}$, the function $g(t,x,u)$ is a $H_p^{\gamma-1}(l_2)$-valued predictable function, and $g(0)\in \bH_p^{\gamma-1}(\tau,l_2)$, where $\tau$ is a stopping time.

\item  For any $\varepsilon>0$, there exists a constant $N_{\varepsilon}$ such that
\begin{equation}
\label{g}
\|g(t,u)-g(t,v)\|_{H_p^{\gamma-1}(l_2)}\leq \varepsilon \|u-v\|_{H_p^{\gamma}}+N_{\varepsilon}\|u-v\|_{H_p^{\gamma-1}},
\end{equation}
for any $u,v\in H_p^{\gamma}$, $\omega\in\Omega$ and $t\in[0,\tau]$.
\end{enumerate}
\end{assumption}

\begin{thm}
\label{key}
Let $\tau\leq T$ be a bounded stopping time, $|\gamma|\leq 1$, and $p\in[2,\infty)$. Suppose that Assumptions \ref{ass coeff''}, and \ref{ass g}($\tau$) hold. Then for any $u_0\in U_p^{\gamma}$,
equation \eqref{quasi-linear1} with $u(0,\cdot) = u_0$ has a unique solution $u$ in $\cH_p^{\gamma}(\tau)$. The solution satisfies
\begin{equation}
\label{qlestimate}
\|u\|_{\cH_p^{\gamma}(\tau)} \leq N(\|g(0)\|_{\bH_p^{\gamma-1}(\tau)}+\|u_0\|_{U_p^{\gamma}}),
\end{equation}
where $N=N(d,p,T, \kappa_0,K,\gamma)$.
\end{thm}

\begin{proof} 
Note that by \cite[Theorem 5.1]{kry99analytic} the claim holds if $\tau$ is a constant. 
Otherwise, consider
\begin{equation*}
\hat{g}(t,u) :=1_{t\leq\tau}g(t,u).
\end{equation*}
Since $\hat{g}$ satisfies \eqref{g} for all $t\leq T$, applying \cite[Theorem 5.1]{kry99analytic} leads to $u\in\cH_p^{\gamma+1}(T)$, which satisfies equation \eqref{quasi-linear1} and estimate \eqref{qlestimate} with $\hat{g}$, in place of $g$. Then, one can check that  $u\in\cH_p^{\gamma+1}(\tau)$ and it satisfies equation \eqref{quasi-linear1}. The existence is proved.


To prove the uniqueness, assume that $v\in\cH_p^{\gamma}(\tau)$ is a solution to the Cauchy problem \eqref{quasi-linear1} with initial data $u_0$. From \cite[Theorem 5.1]{kry99analytic}, there exists $\hat v\in\cH_p^{\gamma}(T)$,  the unique solution of equation
\begin{equation}
\label{unique}
\begin{aligned}
d\hat{v}=(a^{ij}\hat{v}_{x^ix^j}+b^{i}\hat{v}_{x^i}+c\hat{v})\, dt+\sum_{k=1}^{\infty}\hat{g}^k(v)dw^k_t, \quad t>0;\quad \hat{v}(0,\cdot)=u_0.
\end{aligned}
\end{equation}
Then $\varphi := \hat v - v$ satisfies equation
\begin{equation*}
\begin{aligned}
d\varphi=(a^{ij}\varphi_{x^ix^j}+b^{i}\varphi_{x^i}+c\varphi)\,dt, \quad 0<t\leq\tau;\quad \varphi(0,\cdot)=0.\\
\end{aligned}
\end{equation*}
Therefore, $\hat v = v$ by the uniqueness of a solution to deterministic parabolic equations (e.g. \cite[Theorem 5.1]{ladyvzenskaja1988linear}).
Consequently, $\hat{g}^k(v)$ in \eqref{unique} can be replaced with $\hat{g}^k(\hat{v})$. Similarly, there exists $\hat u\in\cH_p^\gamma(T)$ such that $u = \hat u$ and
$$d\hat{u}=(a^{ij}\hat{u}_{x^ix^j}+b^{i}\hat{u}_{x^i}+c\hat{u})\, dt+\sum_{k=1}^{\infty}\hat{g}^k(\bar u)dw^k_t, \quad t>0;\quad \hat{u}(0,\cdot)=u_0.
$$ 

From the uniqueness result in $\cH_p^{\gamma}(T)$, we have $\hat{u} = \hat{v}$ in $\cH_p^{\gamma}(T)$. Thus, $u=v$ in $\cH_p^{\gamma}(\tau)$. The theorem is proved.
\end{proof}\

Lemma \ref{decay} describes behavior of $R_{1-\gamma}^p\ast R_{1-\gamma}^p$.

\begin{lem}
\label{decay}
Let $g$ and $h$ be nonnegative, radial  and decreasing functions. Assume that $R(x) \in L_1(\bR^d)$ satisfies
\begin{equation*}
R(x) =  O(g(x)) \quad\mbox{as}\quad |x|\to\infty,
\end{equation*}
and 
\begin{equation*}
|R(x)| \leq Nh(x)\quad (a.e.),
\end{equation*}
where $N$ is independent of $x$.
Then  
\begin{equation} \label{behavior of RR}
(R\ast R)(x) =  O(g(x/3))\quad\mbox{as}\quad |x|\to\infty,
\end{equation}
and
\begin{equation} \label{behavior of RR_2}
|(R\ast R)(x)| \leq N' h(x/2) \quad (a.e.),
\end{equation}
where $N'$ is independent of $x$.

Furthermore, if  $h(x)=|x|^{-\alpha}$ for some $0<\alpha<d/2$, then $R\ast R \in L_{\infty}(\bR^d)$.
\end{lem}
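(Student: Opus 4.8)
The plan is to prove the convolution estimates \eqref{behavior of RR} and \eqref{behavior of RR_2} by the standard device of splitting the integration domain according to which factor carries the small argument. For the decay statement \eqref{behavior of RR}, I would fix $x$ with $|x|$ large and write
\[
(R\ast R)(x)=\int_{\bR^d}R(y)R(x-y)\,dy
=\left(\int_{|y|\leq |x|/3}+\int_{|x-y|\leq |x|/3}+\int_{\text{rest}}\right)R(y)R(x-y)\,dy.
\]
On the first region $|x-y|\geq |x|-|x|/3=2|x|/3\geq |x|/3$, so by the hypothesis $R(x-y)=O(g(x-y))$ together with monotonicity of $g$ one bounds $R(x-y)\leq N g(x/3)$ and pulls it out of the integral, leaving $g(x/3)\int_{\bR^d}|R(y)|\,dy=g(x/3)\,\|R\|_{L_1}$; the second region is symmetric. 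On the remaining region both $|y|$ and $|x-y|$ exceed $|x|/3$, so both factors decay and the product is even smaller. Summing the three contributions yields $(R\ast R)(x)=O(g(x/3))$.

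For the pointwise bound \eqref{behavior of RR_2} I would argue identically, but split at radius $|x|/2$: on $\{|y|\geq |x|/2\}$ use $|R(y)|\leq N h(y)\leq N h(x/2)$ by monotonicity of $h$ and integrate out the other factor in $L_1$, and symmetrically on $\{|x-y|\geq |x|/2\}$; since at least one of $|y|,|x-y|$ is always $\geq |x|/2$ these two regions cover $\bR^d$, giving $|(R\ast R)(x)|\leq N' h(x/2)\|R\|_{L_1}$ for a.e.\ $x$.

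For the final claim, with $h(x)=|x|^{-\alpha}$ and $0<\alpha<d/2$, boundedness of $R\ast R$ should follow from the hypothesis $|R|\leq N h$ and an application of the Hardy--Littlewood--Sobolev inequality or, more elementarily, Young's convolution inequality: since $|x|^{-\alpha}\in L_r_{\mathrm{loc}}$ for $r<d/\alpha$ and decays at infinity, the condition $\alpha<d/2$ places $R$ in $L_2(\bR^d)$ (the near-origin singularity is integrable to the square because $2\alpha<d$, and the $L_1$ tail control handles infinity), and then $R\ast R\in L_\infty$ because the convolution of two $L_2$ functions is bounded and continuous. I would phrase this via $\|R\ast R\|_{L_\infty}\leq \|R\|_{L_2}^2$.

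The main obstacle is a careful, honest verification that the regions chosen genuinely cover the relevant domain and that the monotonicity of the radial majorants $g,h$ is invoked correctly on each piece — in particular that $|x-y|\geq |x|/k$ forces $g(x-y)\leq g(x/k)$, which uses that $g$ is radial and decreasing so that the sup over the region is attained at the boundary radius. One must also be slightly careful with the $O(\cdot)$ bookkeeping in \eqref{behavior of RR}: the implied constant absorbs $\|R\|_{L_1}$ and the constants from the two $O(g)$ and $L_1$ estimates, and the cross term on the ``rest'' region must be checked to be of the same or smaller order rather than dominating. For the $L_\infty$ claim the only subtlety is confirming that $0<\alpha<d/2$ is exactly the threshold placing $|x|^{-\alpha}$ (with integrable tail) in $L_2$, so that Young's inequality applies without any further decay assumption.
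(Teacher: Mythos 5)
Your proposal is correct and follows essentially the same route as the paper: split the convolution integral so that on each piece at least one argument exceeds $|x|/3$ (resp.\ $|x|/2$), bound that factor by the radial decreasing majorant evaluated at the boundary radius, and integrate the other factor in $L_1$; for the last claim, note $R\in L_2$ (square-integrable singularity since $2\alpha<d$, tail controlled by $|R|^2\leq N|R|$ there) and use $\|R\ast R\|_{L_\infty}\leq\|R\|_{L_2}^2$. The only cosmetic difference is that you use a three-region decomposition where the paper uses two overlapping regions, and the paper invokes H\"older exactly as your Cauchy--Schwarz step.
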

\begin{proof} 
Since $R(x)=O(g(x))$ as $|x|\to\infty$, there exist $c,M>0$ such that  $|R(x)|\leq Mg(x)$ for all $|x|>c$. Then for $|x|>3c$, we have
\begin{equation*}
\begin{aligned}
|(R\ast R)(x)| &\leq \int_{|y|>\frac{|x|}{3}}|R(x-y)R(y)|dy+\int_{|y|\leq \frac{|x|}{3}}|R(x-y)R(y)|dy \\
&\leq \int_{|y|>\frac{|x|}{3}}|R(x-y)R(y)|dy+\int_{|x-y|> \frac{2|x|}{3}}|R(x-y)R(y)|dy \\
&\leq  M\left(\int_{|y|>\frac{|x|}{3}}|R(x-y)| g(y) dy+\int_{|x-y|> \frac{2|x|}{3}}|R(y)| g(x-y)dy \right)\\
&\leq  M g(x/3)\left(\int_{|y|>\frac{|x|}{3}}|R(x-y)|dy+\int_{|x-y|> \frac{2|x|}{3}}|R(y)|dy\right) \\
&\leq 2 M\|R\|_{L_1} g(x/3),
\end{aligned}
\end{equation*}
which implies \eqref{behavior of RR}. 

To prove \eqref{behavior of RR_2}, observe that
\begin{equation*}
\begin{aligned}
|(R\ast R)(x)| &\leq \int_{|y|<|x|/2}|R(x-y)R(y)|dy+\int_{|y|\geq|x|/2}|R(x-y)R(y)|dy \\
 &=\int_{|x-y|>|x|/2}|R(x-y)R(y)|dy+\int_{|y|\geq|x|/2}|R(x-y)R(y)|dy \\
 &\leq N\int_{|x-y|>|x|/2}|R(y)| h(x-y)dy+\int_{|y|\geq|x|/2}|R(x-y)| h(y)dy\\
 &\leq N h(x/2)\left(\int_{|x-y|>|x|/2}|R(y)|dy+\int_{|y|\geq|x|/2}|R(x-y)|dy\right)\\
 &\leq 2N\| R \|_{L_1} h(x/2),
\end{aligned}
\end{equation*}
which implies \eqref{behavior of RR_2}.

For the last assertion, note that  $R\in L_2(\bR^d)$ if $0<\alpha<d/2$, and apply H\"older's inequality. The lemma is proved.
\end{proof}\

Employing Lemma \ref{decay} yields 
Lemma \ref{4_key} that describes relations between diffusion coefficients and solutions to equation \eqref{targeteq:lipschitz:1}.
Recall that for $f\in\cS'_{n}$, there exists the corresponding measure $\mu\in M_T$ such that
\begin{equation*}
(f,\phi) = \int_{\bR^d} \phi(x) \mu(dx)\quad \mbox{for any}\quad \phi\in L_1(\mu), 
\end{equation*}
and
\begin{equation*}
\int_{\bR^d} \frac{1}{(1+|x|^2)^{k/2}} \mu(dx)<\infty
\end{equation*}
for some $k\in[0,\infty)$; see Theorem \ref{191024_03}.

\begin{lem}
\label{4_key}
(a) Let $f\in\cS'_{n}\cap\cS'_{nd}$. Assume that $d$, $\gamma$, and $s$ satisfy one of the following conditions;
\begin{enumerate}[(i)]
\item 
$d = 1$, $\gamma\in [0,1/2)$, and $s\in(\frac{1}{1-2\gamma},\infty]$;

\item 
$d\geq2$, $\gamma\in[0,1),$ $s\in(\frac{d}{1-\gamma},\infty]$, and 
\begin{equation} \label{s}
\int_{|x|<1}|x|^{\frac{s(1-\gamma-d)}{s-1}}\mu(dx)<\infty\quad  \left((1-\gamma-d)\frac{\infty}{\infty}:= 1-\gamma-d \right),
\end{equation}
where $\mu$ is the corresponding measure of $f$.

\end{enumerate}
Take $r, p < \infty$ such that
\begin{equation*} 
\frac{1}{s}+\frac{1}{r} = 1\quad \left(\frac{1}{\infty}:=0\right) \quad\mbox{and}\quad  2\leq 2r\leq p <\infty.
\end{equation*}
If $\xi \in L_{2s}(\bR^d)$ and $u\in L_p(\bR^d)$, then
\begin{equation*}
\|\xi u \boldsymbol{v}\|_{H_p^{\gamma-1}(l_2)} \leq A^{1/2s}I^{1/2r}\|\xi\|_{L_{2s}}\|u\|_{L_p},
\end{equation*}
where 
$\boldsymbol{v}=(v_1,v_2.\cdots)$, $v_i(x) = (f\ast e_i)(x)$,
\begin{equation*}
A=\int_{\bR^d}\frac{1}{(1+|x|^2)^{k/2}}\mu(dx), \quad\mbox{and}\quad I=\int_{\bR^d}(R_{1-\gamma}^r*R_{1-\gamma}^r)(x)(1+|x|^2)^{k(r-1)/2}\mu(dx).
\end{equation*}
(b) 
Let $f\in \cS'_{n}\cap C_{nd}$ . Take $s\in(1,\infty]$ and $\gamma\in[0,1)$ such that
\begin{equation*}
\frac{d}{1-\gamma}<s.
\end{equation*}
If  $\xi \in L_{s}(\bR^d)$ and $u\in L_p(\bR^d)$, then
\begin{equation*}
\|\xi u \boldsymbol{v}\|_{H_p^{\gamma-1}(l_2)} \leq N\|\xi\|_{L_s}\|R_{\gamma}\|_{L_r}\|u\|_{L_p},
\end{equation*}
where $N=\sup_{x\in\bR^d}|f(x)|$.

\end{lem}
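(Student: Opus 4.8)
The plan is to reduce the $l_2$-valued Bessel-potential norm to a single quadratic form against $f$ (equivalently, against its corresponding measure $\mu$), and then to peel off the two constants $A^{1/2s}$ and $I^{1/2r}$ by a chain of H\"older, Minkowski and Young inequalities whose exponents are forced by the relation $1/s+1/r=1$. First I would rewrite the norm: since $\gamma-1\in[-1,0)$, the operator $(1-\Delta)^{(\gamma-1)/2}$ is convolution with the Bessel kernel $R_{1-\gamma}$ of Remark \ref{R}, so for each $i$,
\[
(1-\Delta)^{(\gamma-1)/2}(\xi u v_i)(x)=\int_{\bR^d}R_{1-\gamma}(x-y)\xi(y)u(y)v_i(y)\,dy=\langle\Psi_x,e_i\rangle_{\cH},
\]
where $\Psi_x(y):=R_{1-\gamma}(x-y)\xi(y)u(y)$, using $v_i=f\ast e_i$ and the definition \eqref{191025_02}. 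Because $\{e_i\}$ is a complete orthonormal basis of $\cH$ (Remark \ref{rem:ek:construction}), Parseval's identity collapses the $l_2$-sum:
\[
|(1-\Delta)^{(\gamma-1)/2}(\xi u \boldsymbol{v})(x)|_{l_2}^2=\sum_i\langle\Psi_x,e_i\rangle_{\cH}^2=\|\Psi_x\|_{\cH}^2=(f,\Psi_x\ast\tilde{\Psi}_x)=:G(x)^2.
\]
The one technical point here is the passage to non-Schwartz $\Psi_x$, which I would justify by first taking $\xi,u\in\cS$ and invoking density of $\cS$ in $\cH$. Using $f\in\cS'_n$ and its measure $\mu$, I then write $G(x)^2=\int_{\bR^d}(\Psi_x\ast\tilde\Psi_x)(w)\,\mu(dw)$ and dominate it by $\int_{\bR^d}J_x(w)\,\mu(dw)$, where $J_\cdot(w)=\tilde K_w\ast\rho_w$ is itself a convolution with $\tilde K_w(v):=R_{1-\gamma}(v)R_{1-\gamma}(v+w)$ and $\rho_w(y):=|\xi u(y)|\,|\xi u(y-w)|$.

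Since $\|\xi u\boldsymbol{v}\|_{H_p^{\gamma-1}(l_2)}^2=\|G^2\|_{L_{p/2}}$, I estimate this in three steps for part (a). First, H\"older against $\mu$ with conjugate exponents $s,r$, splitting $J_x(w)=\big[(1+|w|^2)^{-k/(2s)}\big]\cdot\big[J_x(w)(1+|w|^2)^{k/(2s)}\big]$, produces $A^{1/s}$ and leaves
\[
G(x)^2\le A^{1/s}\Big(\int_{\bR^d}J_x(w)^r(1+|w|^2)^{k(r-1)/2}\mu(dw)\Big)^{1/r},
\]
the weight exponents combining correctly because $r/s=r-1$. Second, since $p/(2r)\ge 1$ by hypothesis, Minkowski's integral inequality moves the $L_{p/2}(dx)$-norm inside the $\mu$-integral, reducing matters to $\|J_\cdot(w)\|_{L_{p/2}}$. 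Third, I apply Young's convolution inequality to $J_\cdot(w)=\tilde K_w\ast\rho_w$ with exponents $(r,b)$ where $1/b=1/s+2/p$, giving $\|\tilde K_w\|_{L_r}=(R_{1-\gamma}^r\ast R_{1-\gamma}^r)(w)^{1/r}$, followed by Cauchy--Schwarz and H\"older on $\rho_w$ to get $\|\rho_w\|_{L_b}\le\|\xi\|_{L_{2s}}^2\|u\|_{L_p}^2$; Young's balance $1+2/p=1/r+1/b$ then collapses to exactly $1/s+1/r=1$. Assembling the three steps yields $\|G^2\|_{L_{p/2}}\le A^{1/s}I^{1/r}\|\xi\|_{L_{2s}}^2\|u\|_{L_p}^2$, and taking square roots gives the claimed bound.

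For part (b) the argument is shorter: $f\in C_{nd}$ is a bounded function (Remark \ref{191108_01}), so $G(x)^2=(f,\Psi_x\ast\tilde\Psi_x)\le \sup_x|f(x)|\big(\int_{\bR^d}R_{1-\gamma}(x-y)|\xi u(y)|\,dy\big)^2$, i.e. $G\le\sup_x|f(x)|^{1/2}\,R_{1-\gamma}\ast|\xi u|$. I would then apply Young (producing $\|R_{1-\gamma}\|_{L_r}$) and H\"older ($\|\xi\|_{L_s}\|u\|_{L_p}$), the exponents balancing once more through $1/s+1/r=1$; the hypothesis $d/(1-\gamma)<s$ is precisely what makes $R_{1-\gamma}\in L_r$, since $R_{1-\gamma}(x)\sim|x|^{1-\gamma-d}$ near the origin.

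The hard part will be the analytic control of $R_{1-\gamma}^r\ast R_{1-\gamma}^r$, that is, the finiteness of $I$. Near infinity $R_{1-\gamma}$ decays exponentially, so by Lemma \ref{decay} the convolution also decays exponentially and dominates the polynomial weight $(1+|x|^2)^{k(r-1)/2}$ against the tempered measure $\mu$; near the origin the singularity $R_{1-\gamma}^r\ast R_{1-\gamma}^r$ is again handled by Lemma \ref{decay}, and its integrability against $\mu$ near $0$ is guaranteed exactly by hypothesis \eqref{s} in the case $d\ge 2$, whereas for $d=1$ one uses $R_{1-\gamma}\in L_{2r}(\bR)$ so that $R_{1-\gamma}^r\ast R_{1-\gamma}^r$ is bounded and only the temperedness $A<\infty$ is needed. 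Matching these singularity and decay thresholds to the stated ranges of $\gamma$ and $s$ is the delicate bookkeeping; the inequality chain itself is routine once the reduction to $G(x)^2=(f,\Psi_x\ast\tilde\Psi_x)$ is secured.
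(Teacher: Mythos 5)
Your proposal is correct and takes essentially the same route as the paper's own proof: the Parseval collapse of the $l_2$-sum to $\|R_{1-\gamma}(x-\cdot)\xi u\|_{\cH}^2$ written as an integral against $\mu$, the H\"older/Minkowski/Young chain governed by $1/s+1/r=1$ with $p\geq 2r$, and the finiteness of $I$ via Lemma \ref{decay} (boundedness of $R_{1-\gamma}^r\ast R_{1-\gamma}^r$ through $R_{1-\gamma}\in L_{2r}(\bR)$ when $d=1$, condition \eqref{s} when $d\geq2$). The only difference is bookkeeping: the paper splits the $\xi$ factors off at the first H\"older application on the double integral, while you keep $\xi u$ together, apply H\"older only in the $\mu$-variable, and extract $\|\xi\|_{L_{2s}}$ at the end via Young with the intermediate exponent $b$ — both yield the identical constants $A^{1/2s}I^{1/2r}$.
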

\begin{proof}
Consider $s<\infty$, since the case $s = \infty$ can be handled similarly.
By Fubini's theorem and Parseval's identity,
\begin{equation}
\label{l_2}
\begin{aligned}
|(1-\Delta)^{-\frac{(1-\gamma)}{2}}(\xi u \boldsymbol{v})(x)|_{l_2}^2&=\sum_{k=1}^{\infty}|(1-\Delta)^{-\frac{(1-\gamma)}{2}}(\xi u v_k)(x)|^2 \\
  &=\sum_{k=1}^{\infty}|R_{1-\gamma}*(\xi u (f*e_k))(x)|^2 \\
  &=\sum_{k=1}^{\infty}|(R_{1-\gamma}(x-\cdot)\xi u,f*e_k)|^2 \\
  &=\sum_{k=1}^{\infty}|(f,(R_{1-\gamma}(x-\cdot)\xi u)*\tilde{e}_k)|^2 \\
  &=\sum_{k=1}^{\infty}\langle R_{1-\gamma}(x-\cdot)\xi u,e_k\rangle_{\cH}^2=\|R_{1-\gamma}(x-\cdot)\xi u\|_{\cH}^2.
\end{aligned}
\end{equation}
Observe that the last term of \eqref{l_2} equals to
\begin{equation}
\label{conv}
\int_{\bR^d\times\bR^d}R_{1-\gamma}(x-(y-z))R_{1-\gamma}(x-z)u(y-z)\tilde{u}(z)\xi(y-z)\tilde{\xi}(z)dz\mu(dy).
\end{equation}

{\it Proof of $(a)$.}  
From \eqref{l_2} and \eqref{conv}, applying H\"older's inequality and change of variables lead to
\begin{equation} \label{eqn.08.12.02}
\begin{aligned}
&|(1-\Delta)^{-\frac{(1-\gamma)}{2}}(\xi u \boldsymbol{v})(x)|_{l_2}^2\\
&\quad\leq (J_r(x))^{1/r} \times \left( \int_{\bR^d \times \bR^d} |\xi(y-z)\xi(-z)|^s(1+|y|^2)^{-k/2} dz\mu(dy)\right)^{1/s} \\
&\quad\leq \|\xi\|_{L_{2s}}^2 A^{1/s}(J_r(x))^{1/r},
\end{aligned}
\end{equation}
where
\begin{equation*}
J_r(x)=\int_{\bR^d \times \bR^d} |R_{1-\gamma}(y-z)R_{1-\gamma}(z)|^r|u(y-z+x)u(x-z)|^r(1+|y|^2)^{k(r-1)/2} dz\mu(dy).
\end{equation*}
Hence,
\begin{equation*}
\|\xi u \boldsymbol{v}\|_{H_p^{\gamma-1}(l_2)}^p=\int_{\bR^d}|(1-\Delta)^{-\frac{(1-\gamma)}{2}}(\xi u \boldsymbol{v})(x)|_{l_2}^p dx \leq \|\xi\|_{L_{2s}}^p A^{p/2s}\int_{\bR^d}(J_r(x))^{p/2r}dx.
\end{equation*}

Since $p\geq 2r$, by Minkowski's inequality,
\begin{equation*}
\int_{\bR^d}(J_r(x))^{p/2r}dx \leq \|u\|_{L_p}^p I^{p/2r},
\end{equation*}
where 
\begin{equation*}
I=\int_{\bR^d}|(R_{1-\gamma}^r*R_{1-\gamma}^r)(x)|(1+|x|^2)^{k(r-1)/2}\mu(dx).
\end{equation*}

It remains to show that $I<\infty$.
Since $R^r_\gamma(x)$ exponentially decays as $|x|\to \infty$, $R^r_{1-\gamma}\ast R^r_{1-\gamma}$ decreases exponentially by Lemma \ref{decay}. 
Therefore, it is enough to show that 
\begin{equation}
\label{near_zero_mu_integrable_lipschitz}
\int_{|x|\leq 1}\left|R^r_{1-\gamma}\ast R^r_{1-\gamma}(x)\right|\mu(dx)<\infty.
\end{equation}
In the case of (i), observe that $R_{1-\gamma}\in L_{2r}(\bR)$. Indeed, $R_{ 1-\gamma}$ has exponential dacay as $|x|\to \infty$, and 
$$ \int_{|x|<1}|R_{1-\gamma}(x)|^{2r}dx \leq N(r,\gamma) \int_{|x|<1}|x|^{-2r\gamma}dx<\infty
$$
by the choice of constants.
Therefore, by H\"older's inequality, we have
\begin{equation}
\label{near_zero_mu_integrable_lipschitz_sup}
\sup_{x \in \bR}|R^r_{1-\gamma}\ast R^r_{1-\gamma}(x)| \leq \|R_{1-\gamma}\|_{L_{2r}(\bR)}^{ 2r},
\end{equation}
which implies that $R_{1-\gamma}^r\ast R_{1-\gamma}^r$ is $\mu$-integrable near zero.

In the case of (ii), by Lemma \ref{decay}, $R^r_{1-\gamma}\ast R^r_{1-\gamma}(x) $ decays exponentially as $|x|\to \infty$. In addition, $R^r_{1-\gamma}\ast R^r_{1-\gamma}(x)$ is bounded by $|x|^{r(1-\gamma-d)}=|x|^{\frac{s(1-\gamma-d)}{s-1}}$ near zero. By \eqref{s}, we have
\begin{equation*}
\begin{aligned}
&\int_{|x|<1}|(R^r_{1-\gamma}\ast R^r_{1-\gamma})(x)|(1+|x|^2)^{k(r-1)/2}\mu(dx) \\
\quad &\leq N(d,k,r,\gamma)\int_{|x|<1}|x|^{\frac{s(1-\gamma-d)}{s-1}}\mu(dx)<\infty,
\end{aligned}
\end{equation*}
which implies that  $R_{1-\gamma}^r\ast R_{1-\gamma}^r$ is $\mu$-integrable near zero.
\\

{\it Proof of $(b)$.}
By Remark \ref{191108_01},  $f$ is a  bounded function. 
By boundedness of $f$, applying H\"older's inequality and Young's inequality leads to 
\begin{equation}
\label{eqn.08.12.03}
\begin{aligned}
&|(1-\Delta)^{-\frac{(1-\gamma)}{2}}(\xi u \boldsymbol{v})(x)|_{l_2}^2  \\
  &\quad \leq N\left( \int_{\bR^d \times \bR^d} |\xi(y-z)\xi(-z)|^s dzdy\right)^{1/s} \times (J_r'(x))^{1/r} \\
  & \quad \leq N \|\xi\|_{L_s}^2\times (J_r'(x))^{1/r},
\end{aligned}
\end{equation}
where 
\begin{equation*}
J_r'(x)=\int_{\bR^d \times \bR^d} |R_{1-\gamma}(y-z)R_{1-\gamma}(z)|^r|u(y-z+x)u(x-z)|^r dzdy
\end{equation*}
and $N=\sup_{x\in\bR^d}|f(x)|$. Therefore, by \eqref{eqn.08.12.03}
\begin{equation*}
\|\xi u \boldsymbol{v}\|_{H_p^{\gamma-1}(l_2)}^p=\int_{\bR^d}|(1-\Delta)^{-\frac{(1-\gamma)}{2}}(\xi u \boldsymbol{v})(x)|_{l_2}^p dx \leq N^{p/2}\|\xi\|_{L_{s}}^p \int_{\bR^d}(J_r'(x))^{p/2r}dx.
\end{equation*}
Since $p\geq 2r$, by Minkowski's inequality and Young's inequality, we have
\begin{equation*}
\int_{\bR^d}(J_r'(x))^{p/2r}dx \leq \|u\|_{L_p}^p\|R_{1-\gamma}\|_{L_r}^p.
\end{equation*}
The lemma is proved.
\end{proof}\

By employing Theorem \ref{key} and Lemma \ref{4_key}, we can prove Theorem \ref{case=0}. 
\begin{proof}[\bf{Proof of Theorem \ref{case=0}}.]
To prove the first assertion, it suffices to show that the assumptions in Theorem \ref{key} hold. In other words, we only need to prove that for any $\ep>0$, there exists a constant $N_{\varepsilon}>0$ such that
\begin{equation}
\label{0_goal}
 \|(h(u)-h(w))\xi\boldsymbol{v}\|_{H_p^{\gamma-1}(l_2)}\leq \varepsilon\|u-w\|_{H_p^{\gamma}}+N_{\varepsilon}\|u-w\|_{H_p^{\gamma-1}}
\end{equation}
for all $\omega\in\Omega$, $t\leq\tau$ and $u,w\in H_p^{\gamma}$. 

By Lemma \ref{4_key}, we have 
\begin{equation}
\label{est.g}
\|(h(u)-h(w))\xi\boldsymbol{v}\|_{H_p^{\gamma-1}(l_2)}  \leq N \|u-w\|_{L_p}\quad \text{and}\quad \|\xi h(0) \boldsymbol{v}\|_{\bH_p^{\gamma-1}(\tau)}\leq N\|h(0)\|_{\bL_p(\tau)},
\end{equation}
where $N$ is independent of $u$, $w$ and $h(0)$.  
Then applying Theorem \ref{prop_of_bessel_space} (\ref{condition_of_constants_interpolation}) and Young's inequality leads to \eqref{0_goal}. The theorem is proved.
\end{proof}

\mysection{Proof for case with the super-linear diffusion coefficient}
\label{sec:superlinear}

This section provides a proof of Theorem \ref{main}, which describes unique solvability and $L_p$-regularity of equation  \eqref{targeteq}. For readers' convenience, we recall equation \eqref{targeteq}:
\begin{equation}
\label{targeteq:1}
du =(a^{ij}u_{x^ix^j}+b^iu_{x^i}+cu)dt+\sum_{k=1}^{\infty}\xi|u|^{1+\lambda}(f\ast e_k)dw^k_t,
\quad (t,x) \in (0,\infty) \times \bR^d;
\quad u(0,\cdot) = u_0(\cdot),
\end{equation}
where $\lambda \geq 0$.
Theorem \ref{max_principle}, Lemma \ref{Lem_5_1}, and Lemma \ref{Non_explosion} are used to prove Theorem \ref{main}. 
Based on investigation described in Section \ref{sec:lambda=0}, Lemma \ref{Lem_5_1} suggests a local solution to equation \eqref{targeteq:1}. Then a global solution is constructed from the local solution with the help of Lemma \ref{Non_explosion}, 
which proves the existence and regularity of a solution for \eqref{targeteq:1}. In the construction of the global solution, Theorem \ref{max_principle} is employed to show that $\|u(t,\cdot)\|_{L_1(\bR^d)}$ is a continuous local martingale. Detailed proof for Theorem \ref{main} including the uniqueness of a solution is described at the end of this section. 
\\

\begin{thm}[Maximum principle]
\label{max_principle}
Let $\tau\leq T$ be a bounded stopping time. Assume that all the conditions of Theorem \ref{case=0} are satisfied with $s = \infty$ and $h(u)=u$. If $u\in\cH_p^{\gamma}(\tau)$ is the solution described in Theorem \ref{case=0}, 
then $u(t,\cdot)$ is nonnegative for all $t\leq\tau$ almost surely.
\end{thm}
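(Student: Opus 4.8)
The plan is to exploit the fact that, with $h(u)=u$, equation \eqref{targeteq:lipschitz} is \emph{linear} with a multiplicative noise whose coefficient $\xi u(f\ast e_k)$ vanishes exactly where $u$ vanishes, and to run an energy estimate on the negative part $u^-:=\max(-u,0)$. Concretely, I would apply It\^o's formula to $\int_{\bR^d}\Phi_\ep(u)\,dx$, where $\Phi_\ep\in C^2(\bR)$ is a convex approximation of $(r^-)^2$ with $\Phi_\ep(r)=0$ for $r\ge 0$, $\Phi_\ep'\le 0$, $\Phi_\ep''\ge 0$, and $\Phi_\ep(r)\to (r^-)^2$. Because the solution furnished by Theorem \ref{case=0} only lives in $\cH_p^\gamma(\tau)$ with $\gamma<1/2$ --- so that $\bS u=\xi u\boldsymbol v$ sits in the negative-order space $H_p^{\gamma-1}(l_2)$ and is not a genuine function --- It\^o's formula is not licit for $u$ itself. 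I would therefore first regularize: mollify the coefficients and the initial data, and replace $f$ by a smooth nonnegative and nonnegative-definite $f_n$, so that $\boldsymbol v_n=(f_n\ast e_k)_k$ consists of bona fide functions and, crucially, $\Theta_n(x):=\sum_k (f_n\ast e_k)^2(x)=f_n(0)$ is \emph{bounded}. The approximate equations then have solutions $u_n$ regular enough to justify It\^o's formula, and by the a priori bound \eqref{0_estimate} together with the linear stability of \eqref{targeteq:lipschitz} one arranges $u_n\to u$ in $\cH_p^\gamma(\tau)$ with nonnegative mollified data $u_{0,n}\ge 0$.

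For each $u_n$ the It\^o expansion of $\int \Phi_\ep(u_n)\,dx$ produces a drift plus a local martingale of zero expectation. In the drift, the second-order term is integrated by parts to give $-\int \Phi_\ep''(u_n)\,a^{ij}(u_n)_{x^i}(u_n)_{x^j}\,dx$ together with a first-order remainder; the former is $\le 0$ by convexity ($\Phi_\ep''\ge 0$) and the uniform ellipticity \eqref{ellipticity''}. The first- and zeroth-order drift contributions and the remainder are controlled by $N\int \Phi_\ep(u_n)\,dx$: writing $\Phi_\ep'(u_n)(u_n)_{x^i}$ as a spatial divergence and integrating by parts once more, and using the boundedness of the coefficients from Assumption \ref{ass coeff''} and the elementary bound $\Phi_\ep'(r)\,r\le N\Phi_\ep(r)$. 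The It\^o correction equals $\tfrac12\int \Phi_\ep''(u_n)\,\xi^2 u_n^2\,\Theta_n\,dx$; here the \emph{linearity} of the diffusion coefficient is decisive, since $\Phi_\ep''(r)\,r^2\le N\Phi_\ep(r)$ (tending to the exact identity $2(r^-)^2$), so that with $\|\xi\|_{L_\infty}\le K$ and the boundedness of $\Theta_n$ this term too is dominated by $N\int \Phi_\ep(u_n)\,dx$. Collecting the estimates yields $\tfrac{d}{dt}\,\bE\int \Phi_\ep(u_n)\,dx\le N\,\bE\int \Phi_\ep(u_n)\,dx$, and since $\int\Phi_\ep(u_{0,n})\,dx=0$, Gronwall's inequality followed by $\ep\to0$ gives $\bE\int (u_n^-(t,\cdot))^2\,dx=0$ for all $t\le\tau$, i.e. $u_n\ge 0$.

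Finally I would pass to the limit: from $u_n\to u$ in $\cH_p^\gamma(\tau)$ a subsequence converges $d\bP\times dt\times dx$-a.e., whence $u\ge 0$ a.e., and the spatial continuity of $u$ (from the H\"older embedding of Corollary \ref{s.t. holder}) upgrades this to $u(t,\cdot)\ge 0$ for all $t\le\tau$, almost surely. The hard part will be the low regularity: since $\bS u$ is only a distribution of negative order, the It\^o formula must be routed through the regularization above, and the delicate bookkeeping is to keep the mollified noises $f_n$ with uniformly harmless $\Theta_n$ while still approximating $f$ --- so that the Gronwall constant $N$ is independent of $n$ --- and to preserve nonnegativity of the approximate initial data. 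I expect these approximation steps, rather than the energy computation itself, to be the main obstacle, the computation closing cleanly only because $h(u)=u$ forces the noise to vanish to exactly the quadratic order matched by $(u^-)^2$.
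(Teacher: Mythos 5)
Your overall strategy (approximate, apply It\^o's formula to a convex regularization of $(u^-)^2$, Gronwall, pass to the limit) is the classical route, but the specific regularization you chose creates a gap that your sketch does not close. Replacing the covariance $f$ by a smooth $f_n$ changes the Hilbert space $\cH$, hence the orthonormal basis $\{e_k\}$ and the series representation of the noise itself; the two equations then carry noise terms built from \emph{different} bases, and the asserted ``linear stability'' giving $u_n\to u$ in $\cH_p^\gamma(\tau)$ is precisely the hard step. To prove it you would need an analogue of Lemma \ref{4_key} for the difference of the stochastic terms, i.e.\ an estimate in $H_p^{\gamma-1}(l_2)$ involving $f-f_n$; but $f-f_n$ is neither nonnegative nor nonnegative definite, so the lemma's proof (which rests on writing the $l_2$-norm as $\|R_{1-\gamma}(x-\cdot)\xi u\|^2_{\cH}$ via Parseval and then using the measure $\mu\geq 0$) does not apply, and no substitute is offered. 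The paper reaches the same goal --- a noise for which It\^o-type arguments are licit --- without ever touching $f$: it truncates the expansion, setting $\boldsymbol v_m=(v_1,\dots,v_m,0,0,\dots)$, so that the Hilbert space, the basis and the Wiener processes are unchanged. Then the error term is the tail $\sum_{k>m}\langle R_{1-\gamma}(x-\cdot)\xi u,e_k\rangle^2_{\cH}$ of a convergent Parseval series, and $u_m\to u$ in $\cH_p^\gamma(T)$ follows from Theorem \ref{key}, the estimate \eqref{finite_sum}, Gronwall's inequality and dominated convergence. This is the step your mollification cannot reproduce without substantial new estimates.

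Two further points would also need repair. First, your energy functional $\int_{\bR^d}\Phi_\ep(u_n)\,dx\approx\|u_n^-\|_{L_2}^2$ can be infinite: solutions live in $H_p^\gamma$-based spaces with $p>2$ on the whole space $\bR^d$, and $L_p\not\subseteq L_2$ there; one must work with the $p$-th power (or a spatial weight) instead. Second, the regularity needed to run It\^o's formula does not come from mollifying $a,b,c,u_0$ (note $\xi$ is merely bounded measurable, so no smoothing of the equation's coefficients is available); the mechanism that works, and that the paper uses, is to regard the (truncated) stochastic term $g^k_m(u_m)$ as a \emph{given} free term in $\bL_p(T,l_2)$, apply the linear Theorem \ref{key} with $u_0^m\in U_p^1$ to get a solution in $\cH_p^1(T)$, and identify it with $u_m$ by uniqueness in the larger class $\cH_p^\gamma(T)$. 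Once this is done, the paper does not redo the negative-part energy computation at all: for the finite-sum equation with bounded coefficients $\xi v_k$ it simply cites the known SPDE maximum principle of \cite{krylov2007maximum}. Your It\^o computation is in effect a re-derivation of that cited result; that is legitimate in principle, but the martingale/localization bookkeeping it requires is exactly what the missing $\cH_p^1$-regularity and the finiteness of the energy functional are needed for.
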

\begin{proof}
Although similar to the proof of \cite[Theorem 4.2]{han2020boundary}, 
the proof of Theorem \ref{max_principle} is given in \ref{appendix} for the sake of completeness.  
\end{proof}

Lemma \ref{Lem_5_1} is used to construct a sequence of functions that approximate a solution to equation \eqref{targeteq:1}.

\begin{assumption}[$\tau$]
\label{xi'}
The coefficient $\xi$ is  $\mathcal{P}\times\mathcal{B}(\bR^d)$-measurable. Also, there exists a constant $K > 0$ such that
\begin{equation}
\label{xi_1'}
\| \xi(t,\cdot) \|_{L_\infty}\leq K  \quad  \quad \forall \omega\in\Omega,\, 
\forall t\leq\tau. 
\end{equation}
\end{assumption}

\begin{lem}
\label{Lem_5_1}
Let $\tau\leq T$ be a bounded stopping time, $f\in\cS'_n\cap\cS'_{nd}$ and $m \in \bN$.
Suppose  Assumptions  \ref{ass coeff''} and \ref{xi'}($\tau$) hold. Assume that $d$, $\lambda$, $\gamma$, $p$ and $f$ satisfy one of the conditions \eqref{main_thm_first_condi}-\eqref{main_thm_third_condi} in Theorem \ref{main}. Assume $u_0$ is in $ U_p^\gamma$ and $u_0$ is nonnegative. Consider
\begin{equation}
\label{eq_m}
\begin{aligned}
du=(a^{ij}u_{x^ix^j}+b^iu_{x^i}+cu)dt+\sum_{k=1}^{\infty}\xi|(-m)\vee u\wedge m|^{1+\lambda}v_kdw^k_t,
\quad 0<t\leq \tau;
\quad u(0,\cdot)=u_0.
\end{aligned}
\end{equation}
Then, equation \eqref{eq_m} with initial data $u(0,\cdot) = u_0$ has a unique solution $u_m\in\cH_p^{\gamma}(\tau)$ and $u_m$ is  nonnegative. 
Moreover, if Assumption \ref{xi'}($\tau$) is replaced with Assumption \ref{xi'}($\infty$), then 
$$u_m\in\cH_p^{\gamma}(T')$$
for any $T'<\infty$.
\end{lem}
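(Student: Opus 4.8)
The plan is to reduce equation \eqref{eq_m} to the Lipschitz framework already settled in Theorem \ref{case=0}, and then to recover nonnegativity through the maximum principle of Theorem \ref{max_principle}. First I would set $h_m(\omega,t,x,u):=|(-m)\vee u\wedge m|^{1+\lambda}$. Since the truncation $u\mapsto(-m)\vee u\wedge m$ is $1$-Lipschitz with values in $[-m,m]$ and $v\mapsto|v|^{1+\lambda}$ is Lipschitz on $[-m,m]$ with constant at most $(1+\lambda)m^{\lambda}$, the composition $h_m$ is Lipschitz in $u$ uniformly in $(\omega,t,x)$, with constant $K_m:=(1+\lambda)m^{\lambda}$; moreover $h_m(\omega,t,x,0)=0\in\bL_p(\tau)$, so Assumption \ref{assumption_for_h}$(\tau,p)$ holds for $h_m$. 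Next I would check that each of the conditions \eqref{main_thm_first_condi}--\eqref{main_thm_third_condi} in Theorem \ref{main} implies the corresponding condition (i)--(iii) of Theorem \ref{case=0} with $s=\sigma=\infty$: Assumption \ref{xi'}$(\tau)$ is exactly Assumption \ref{assumption_for_xi}$(\tau,\infty)$; the bounds $\gamma<1$ and $p>2$ are immediate from the stated ranges; and since $\lambda\geq0$ gives $0<1-2\lambda\leq1$ while $1-\gamma-d<0$ for $d\geq2$, the exponent obeys $\tfrac{1-\gamma-d}{1-2\lambda}\leq 1-\gamma-d$, hence $|x|^{\frac{1-\gamma-d}{1-2\lambda}}\geq|x|^{1-\gamma-d}$ for $|x|<1$, so \eqref{eq.08.10.} implies the integrability \eqref{20.12.14.11.01} needed at $s=\infty$. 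With these checks, Theorem \ref{case=0} produces a unique $u_m\in\cH_p^{\gamma}(\tau)$ solving \eqref{eq_m}, the constant in \eqref{0_estimate} now depending on $m$ through $K_m$.

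For nonnegativity I would employ the device announced after \eqref{targeteq}, rewriting the truncated diffusion coefficient in the linear form (bounded)$\times u\times v_k$. Define
\begin{equation*}
\hat\xi_m(\omega,t,x):=\xi(\omega,t,x)\,\frac{h_m\big(\omega,t,x,u_m(\omega,t,x)\big)}{u_m(\omega,t,x)}\qquad(0/0:=0).
\end{equation*}
A direct computation gives $|h_m(v)/v|\leq m^{\lambda}$ for every $v$, so $\hat\xi_m$ is $\cP\times\cB(\bR^d)$-measurable and bounded by $Km^{\lambda}$; and since $\hat\xi_m\,u_m\,v_k=\xi h_m(u_m)v_k$ pointwise, $u_m$ also solves the linear equation $du=(a^{ij}u_{x^ix^j}+b^iu_{x^i}+cu)\,dt+\sum_k \hat\xi_m\,u\,v_k\,dw^k_t$ with $u(0,\cdot)=u_0$. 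This equation is of the form treated in Theorem \ref{case=0} with $h(u)=u$ and bounded coefficient $\hat\xi_m$ (so $s=\infty$), and by the uniqueness part of that theorem $u_m$ is precisely the solution to which Theorem \ref{max_principle} applies; as $u_0\geq0$, we conclude $u_m(t,\cdot)\geq0$ for all $t\leq\tau$ almost surely. I expect this to be the main obstacle, the delicate points being the boundedness and measurability of $\hat\xi_m$ and the absence of circularity: $\hat\xi_m$ is defined only \emph{after} $u_m$ is produced and is then frozen as a genuine predictable coefficient, so that the maximum principle, stated only for $h(u)=u$, can legitimately be invoked.

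Finally, for the last assertion, replacing Assumption \ref{xi'}$(\tau)$ by Assumption \ref{xi'}$(\infty)$ makes the bound on $\xi$ hold for all $t$, so one may take $\tau\equiv T'$ a constant stopping time for an arbitrary $T'<\infty$ and rerun the argument above to obtain a solution in $\cH_p^{\gamma}(T')$; the uniqueness in $\cH_p^{\gamma}(\cdot)$ guarantees that these solutions agree on common time intervals, which yields $u_m\in\cH_p^{\gamma}(T')$ for every $T'<\infty$ as claimed.
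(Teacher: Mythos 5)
Your proposal is correct and follows essentially the same route as the paper: reduce \eqref{eq_m} to Theorem \ref{case=0} with $s=\infty$ via the Lipschitz bound on the truncated coefficient, obtain nonnegativity by freezing the bounded ratio $\xi\,|(-m)\vee u_m\wedge m|^{1+\lambda}/u_m$ (with $0/0:=0$) as a predictable coefficient so that Theorem \ref{max_principle} applies, and extend beyond $\tau$ by uniqueness. Your explicit verification that \eqref{eq.08.10.} implies \eqref{20.12.14.11.01} at $s=\infty$ is a detail the paper leaves implicit, but the argument is the same.
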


\begin{proof}
By the mean-value theorem, for $u,v\in \bR$, we have
\begin{equation}
|(-m)\vee u\wedge m|^{1+\lambda}-|(-m)\vee v\wedge m|^{1+\lambda} \leq (1+\lambda)(2m)^{\lambda}|u-v|.
\end{equation}
Thus, $\xi$ and $h(u)=(-m)\vee u\wedge m$ satisfy Assumptions \ref{assumption_for_xi}($\tau,\infty$) and \ref{assumption_for_h}($\tau,p$), respectively. In the virtue of Theorem \ref{case=0}, equation \eqref{eq_m} has a unique solution $u_m\in\cH_p^{\gamma}(\tau)$.

To see the nonnegativity of $u_m$, define
\begin{equation}
\label{xi_m}
\xi_m(t,x):=\frac{\xi(t,x)|(-m)\vee u_m(t,x)\wedge m|^{1+\lambda}}{u_m(t,x)}\quad \left(\frac{0}{0}:=0\right).
\end{equation}
Note that $\xi_m$ is bounded, i.e., $\sup\limits_{\omega\in\Omega}\sup\limits_{t\leq\tau}\|\xi_m(t,\cdot)\|_{\infty}<\infty$.  Observe that $u_m$ satisfies

\begin{equation}
du_m=\left(a^{ij}(u_m)_{x^ix^j} + b^i (u_m)_{x^i} + cu_m\right) \,dt+\sum_{k=1}^{\infty}\xi_m u_mv_kdw^k_t, \quad 0<t\leq\tau; \quad u_m(0,\cdot)=u_0.
\end{equation}
 Therefore, by Theorem \ref{max_principle} we conclude that $u_m$ is  nonnegative. To prove the last assertion, notice that there exists a unique solution $\hat{u}_m\in \cH_p^{\gamma}(T')$ for $T'\geq\tau$, by the first assertion. From the uniqueness of a solution in $\cH_p^{\gamma}(\tau)$, we have $\hat{u}_m = u_m$ in $\cH_p^{\gamma}(\tau)$. The lemma is proved.
\end{proof}

\begin{rem}
 Suppose Assumption \ref{xi'}($\infty$) holds. Let $u_m$ be the solution of equation \eqref{eq_m}. Then, for fixed $\phi\in C_c^{\infty}$,
\begin{equation*}
M_t:=\sum_{k=1}^{\infty}\int_0^t(\xi|u_m\wedge m|^{1+\lambda}v_k,\phi)dw^k_t
\end{equation*}
is a square integrable martingale. Indeed,
\begin{equation*}
\begin{aligned}
&\sum_{k=1}^{\infty}\bE\int_0^t(\xi|u_m\wedge m|^{1+\lambda}v_k,\phi)^2dt\leq N \bE\sum_{k=1}^{\infty}\int_0^t(v_k,\phi)^2dt  \\
  & \quad\quad= tN\sum_{k=1}^{\infty}(f*e_k,\phi)^2= tN\|\phi\|^2_{\cH}<\infty,
\end{aligned}
\end{equation*}
where $N=N(K',m,\lambda)$.
\end{rem}

Note that the solution $u_m$ from Lemma \ref{Lem_5_1} satisfies equation \eqref{targeteq:1} in $\cH_p^\gamma(\tau_m)$, where
$$ \tau_m:=\inf\{ t\geq0: \sup_{x}|u_m(t,x)|\geq m \}.
$$

In the proof of Theorem \ref{main},  Lemma \ref{Non_explosion} is used to show the divergence of stopping times. Lemma \ref{Aux. function} is applied to prove Lemma \ref{Non_explosion}.

\begin{lem}
\label{Aux. function}
For $k = 1,2,\cdots$, denote
\begin{equation*}
\psi_k(x):=\frac{1}{\cosh(|x|/k)}.
\end{equation*}
Suppose Assumption \ref{ass coeff''} holds and let $K$ is the constant described in  Assumption  \ref{ass coeff''}. 
Then for all $\omega, x,t$, 
\begin{equation}
\label{negative}
\begin{aligned}
&(a^{ij}\psi_{k})_{x^ix^j }-(b^i\psi_{k})_{x^i}+(c-4K)\psi_{k}
\\&\quad
=a^{ij}\psi_{k x^ix^j}+(2a^{ij}_{x^j}-b^i)\psi_{kx^i}+(a^{ij}_{x^ix^j}-b^i_{x_i}+c-4K)\psi_k \leq 0.
\end{aligned}
\end{equation}
\end{lem}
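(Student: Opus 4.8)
The plan is to first confirm the algebraic identity displayed in the statement, and then prove the inequality, which is the substantive part. The identity is nothing but the product rule: expanding $(a^{ij}\psi_k)_{x^ix^j}$ and $(b^i\psi_k)_{x^i}$ and using the symmetry $a^{ij}=a^{ji}$ to merge the two mixed terms $a^{ij}_{x^i}\psi_{kx^j}$ and $a^{ij}_{x^j}\psi_{kx^i}$ into $2a^{ij}_{x^j}\psi_{kx^i}$. It then remains to bound the operator $L^{*}\psi_k:=a^{ij}\psi_{kx^ix^j}+(2a^{ij}_{x^j}-b^i)\psi_{kx^i}+(a^{ij}_{x^ix^j}-b^i_{x^i}+c)\psi_k$ by $4K\psi_k$. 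I first note that $\psi_k>0$ everywhere and that $\psi_k$ is in fact smooth on all of $\bR^d$ (including the origin), since $\operatorname{sech}$ is an even smooth function and $\operatorname{sech}(|x|/k)$ therefore depends smoothly on $|x|^2$.

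Next I would record the radial derivatives. Writing $\rho=|x|$ and $\psi_k=g(\rho)$ with $g(\rho)=\operatorname{sech}(\rho/k)$, one has $g'(\rho)=-\tfrac1k\operatorname{sech}(\rho/k)\tanh(\rho/k)$ and $g''(\rho)=-\tfrac1{k^2}\operatorname{sech}(\rho/k)\bigl(1-2\tanh^2(\rho/k)\bigr)$, together with
\begin{equation*}
\psi_{kx^i}=g'(\rho)\frac{x^i}{\rho},\qquad \psi_{kx^ix^j}=g''(\rho)\frac{x^ix^j}{\rho^2}+g'(\rho)\Bigl(\frac{\delta^{ij}}{\rho}-\frac{x^ix^j}{\rho^3}\Bigr).
\end{equation*}
Since $|\tanh|\le1$ and $\tfrac{\tanh t}{t}\le1$ for $t>0$, these yield the pointwise bounds $|\psi_{kx^i}|\le \tfrac1k\psi_k$, $|g''(\rho)|\le\tfrac1{k^2}\psi_k$ and $\bigl|g'(\rho)/\rho\bigr|\le\tfrac1{k^2}\psi_k$. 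Because $k\ge1$, all the prefactors $\tfrac1k$ and $\tfrac1{k^2}$ are at most $1$, which is what will produce a constant independent of $k$.

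The heart of the argument is the second-order term. Substituting the Hessian formula and using $a^{ij}\delta^{ij}=a^{ii}$ gives
\begin{equation*}
a^{ij}\psi_{kx^ix^j}=g''(\rho)\,\frac{a^{ij}x^ix^j}{\rho^2}+\frac{g'(\rho)}{\rho}\Bigl(a^{ii}-\frac{a^{ij}x^ix^j}{\rho^2}\Bigr).
\end{equation*}
The ellipticity \eqref{ellipticity''} forces $(a^{ij})$ to have eigenvalues in $[\kappa_0,K]$, so $a^{ij}x^ix^j/\rho^2\le a^{ii}$ (the largest eigenvalue is dominated by the sum of the positive eigenvalues) and $a^{ij}x^ix^j/\rho^2\le K$; combined with $g'(\rho)/\rho\le0$ this shows the cross term is $\le0$ and may be discarded, leaving $a^{ij}\psi_{kx^ix^j}\le |g''(\rho)|\,K\le K\psi_k$. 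For the first-order term I would use $|\psi_{kx^i}|\le\tfrac1k\psi_k$ together with the bound \eqref{bounded''} on the first derivatives of $a^{ij}$ and on $b^i$ to get $(2a^{ij}_{x^j}-b^i)\psi_{kx^i}\le 2K\psi_k$, and for the zeroth-order term the bound \eqref{bounded''} on the second derivatives of $a^{ij}$, on the first derivatives of $b^i$, and on $c$ to get $(a^{ij}_{x^ix^j}-b^i_{x^i}+c)\psi_k\le K\psi_k$. Adding the three contributions gives $L^{*}\psi_k\le(1+2+1)K\psi_k=4K\psi_k$, which is exactly the claim; the origin is covered by continuity (there $\nabla\psi_k=0$ and $D^2\psi_k=-k^{-2}I$, so the inequality is immediate).

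The main obstacle is precisely the second-order term: its radial part $g''$ changes sign (it is negative near the origin but becomes positive once $|x|\gtrsim k$), and the remaining part $g'(\rho)/\rho$ looks singular at $\rho=0$. The two devices that resolve this are (i) the ellipticity-driven inequality $a^{ii}\ge a^{ij}x^ix^j/\rho^2$, which guarantees the cross term never contributes positively regardless of where $|x|$ sits, and (ii) the uniform-in-$\rho$ bounds $|g''|,|g'/\rho|\le k^{-2}\psi_k$, which make every estimate uniform over $k\ge1$ and pin down the clean constant $4K$.
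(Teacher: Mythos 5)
Your proof is correct and takes essentially the same approach as the paper's: the same radial computation of $\nabla\psi_k$ and $D^2\psi_k$ for $\psi_k=1/\cosh(|x|/k)$, ellipticity \eqref{ellipticity''} for the second-order term, and the $C^2$-bounds \eqref{bounded''} for the first- and zeroth-order terms. The only substantive difference is your handling of the cross term $\frac{g'(\rho)}{\rho}\bigl(a^{ii}-a^{ij}x^ix^j/\rho^2\bigr)$: you discard it as nonpositive using $a^{ij}x^ix^j/\rho^2\le\lambda_{\max}\le\operatorname{tr}(a)=a^{ii}$ together with $g'\le 0$, whereas the paper retains it and bounds it by $(K-d\kappa_0)k^{-2}\psi_k$ via $\tanh t\le t$; your variant is marginally sharper, since your three contributions sum to $K+2K+K=4K$ uniformly in $k\ge1$, while the paper's literal bookkeeping yields $2K/k^2+2K/k+K$, which stays below $4K$ only for $k\ge2$.
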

\begin{proof}
Observe that
\begin{equation*}
\psi_{kx^i}(x)=-\psi_k(x)\frac{ x^i\tanh(|x|/k)}{k|x|}
\end{equation*}
and
\begin{equation*}
\begin{aligned}
\psi_{kx^ix^j}(x)&=\psi_k(x)\frac{x^ix^j}{k^2|x|^2}\tanh^2(|x|/k) -\psi_k(x)\frac{x^ix^j}{k^2|x|^2\cosh^2(|x|/k)}\\
&\quad +\psi_k(x)\left(\frac{x^ix^j}{|x|^3}-\frac{\delta_{ij}}{|x|}\right)\frac{\tanh(|x|/k)}{k},
\end{aligned}
\end{equation*}
where $\delta_{ij}$ is the Kronecker delta.  Condition \eqref{ellipticity''} and the fact that $\tanh(|x|)\leq |x|\wedge1$ lead to
\begin{equation*}
\begin{aligned}
a^{ij}\psi_{kx^ix^j}(x)&=\psi_k(x)\frac{a^{ij}x^ix^j}{k^2|x|^2}\tanh^2(|x|/k) -\psi_k(x)\frac{a^{ij}x^ix^j}{k^2|x|^2\cosh^2(|x|/k)}
  \\
  &\quad+\psi_k(x)\left(\frac{a^{ij}x^ix^j}{|x|^3}-\frac{a^{ii}}{|x|}\right)\frac{\tanh(|x|/k)}{k} \\
  &\leq\psi_k(x) \frac{K}{k^2 }\tanh(|x|/k)-\psi_k(x)\frac{ \kappa_0}{k^2\cosh^2(|x|/k)}
  +\psi_k(x)\frac{(K-d\kappa_0)\tanh(|x|/k)}{k|x|} 
  \\&\leq \frac{K\psi_k(x) \tanh(|x|/k)}{k^2 }+\frac{K\psi_k(x)}{k^2 } 
  \\&
  \leq \frac{2K\psi_k(x)}{k^2 }.
\end{aligned}
\end{equation*}
and
\begin{equation*}
\begin{aligned}
(2a^{ij}_{x^j}-b^i)\psi_{kx^i}(x)&=-(2a^{ij}_{x^j}-b^i)\psi_k(x)\frac{x^i\tanh(|x|/k)}{k|x|} \\
  &\leq \frac{(2|a^{ij}_{x^j}|+|b^i|)}{k}\psi_k(x).
\end{aligned}
\end{equation*}
Thus by assumption \eqref{bounded''},
\begin{equation*}
\begin{aligned}
&a^{ij}\psi_{kx^ix^j}(x)+(2a^{ij}_{x^j}-b^i)\psi_{kx^i}(x)+(a^{ij}_{x^ix^j}-b^i_{x_i}+c-4K)\psi_k(x) \\
& \quad\leq
\psi_k(x)\left[\frac{2K}{k^2 }+\frac{2|a^{ij}_{x^j}|+|b^i|}{k}+|a^{ij}_{x^ix^j}|+|b^i_{x^i}|+|c|-4K \right]\leq 0.
\end{aligned}
\end{equation*}
The lemma is proved.
\end{proof}

\begin{lem}
\label{Non_explosion}
Suppose the conditions of Theorem \ref{main} are satisfied. Let $u_m$ be the solution described in Lemma \ref{Lem_5_1}.
Then for any $T<\infty$,
\begin{equation}
\label{zero}
\lim_{R \to \infty}\sup_{m}\bP\left(\sup_{t\leq T}\sup_{x\in\bR^d}|u_m(t,x)|\geq R\right)=0.
\end{equation}
\end{lem}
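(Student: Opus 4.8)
The plan is to prove this in two stages: first obtain a uniform-in-$m$ bound on the total mass $\|u_m(t,\cdot)\|_{L_1}$, and then promote this to a uniform bound on $\sup_{t\le T}\sup_x u_m$. Throughout, write $Lu:=a^{ij}u_{x^ix^j}+b^iu_{x^i}+cu$ and recall from Lemma~\ref{Lem_5_1} together with Theorem~\ref{max_principle} that each $u_m$ is nonnegative; this sign is exactly what makes the mass argument run. For the first stage I would take the weights $\psi_n(x)=1/\cosh(|x|/n)$ from Lemma~\ref{Aux. function} (which are smooth, exponentially decaying, hence in $\cS$, so testing against them is legitimate by Remark~\ref{schwartz}) and test the weak formulation of the equation for $u_m$ against $\psi_n$, exhibiting $t\mapsto(u_m(t,\cdot),\psi_n)$ as a continuous semimartingale. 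Moving $L$ onto $\psi_n$ by integration by parts, the drift equals $(u_m,(a^{ij}\psi_n)_{x^ix^j}-(b^i\psi_n)_{x^i}+c\psi_n)$, which by Lemma~\ref{Aux. function} and $u_m\ge0$ is bounded above by $4K(u_m,\psi_n)$. Hence $Y^{m,n}_t:=e^{-4Kt}(u_m(t,\cdot),\psi_n)$ is a nonnegative continuous local supermartingale, the stochastic term being a local martingale localized along a reducing sequence.

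From the maximal inequality for nonnegative supermartingales and $\mathbb{E}[Y^{m,n}_0]\le\mathbb{E}\|u_0\|_{L_1}$, followed by $n\to\infty$ (monotone convergence, using $\psi_n\uparrow1$ and $u_0\in L_1(\Omega;L_1)$), I would obtain the uniform mass tightness
\[
\sup_m\mathbb{P}\Big(\sup_{t\le T}\|u_m(t,\cdot)\|_{L_1}\ge\lambda\Big)\le e^{4KT}\,\frac{\mathbb{E}\|u_0\|_{L_1}}{\lambda}.
\]
For the second stage, fix $R\le m$, set $\tau_m^R:=\inf\{t:\sup_x u_m(t,x)\ge R\}$, and note that on $\opar0,\tau_m^R\wedge T\cbrk$ the truncation is inactive, so $u_m$ solves the genuine equation with diffusion source $\xi u_m^{1+\lambda}\mathbf{v}$. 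I would run the linear $\cH_p^\gamma$-theory (Theorem~\ref{key}, as used for Theorem~\ref{case=0}) on this interval, estimate the source in $\bH_p^{\gamma-1}(l_2)$ by Lemma~\ref{4_key}, interpolate the resulting norm of $u_m$ between the now-controlled $L_1$-mass and the high norm $H_p^\gamma$ (via the multiplicative inequality of Lemma~\ref{prop_of_bessel_space}), and finally convert the $\cH_p^\gamma$-bound to a bound on $\mathbb{E}\sup_t\sup_x|u_m|^p$ by Corollary~\ref{s.t. holder}, closing via Chebyshev's inequality.

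The main obstacle is this promotion step. The crude option is to write $u_m^{1+\lambda}=u_m^\lambda\cdot u_m\le R^\lambda u_m$ on $\opar0,\tau_m^R\cbrk$ and treat the equation as linear with coefficient of size $R^\lambda$; but then Gronwall produces a factor growing like $e^{cR^{2\lambda}T}$, which swamps the gain $R^{-p}$ from Chebyshev and makes the probability fail to vanish. To beat this, the nonlinear source must be controlled by interpolating against the uniformly bounded $L_1$-mass, so that the high norm $H_p^\gamma$ enters with an exponent strictly less than one and can be absorbed; this self-improving absorption is available precisely when $\lambda$ lies in the ranges of Theorem~\ref{main} (equivalently $\gamma<\gamma_0$ or $\gamma<\gamma_1$ of Definition~\ref{def:gamma0,1}), which encode the subcriticality of $|u|^{1+\lambda}$ relative to the sharp diffusion estimate of Lemma~\ref{4_key}. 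Making this absorption hold uniformly in both $m$ and $R$, so that the resulting bound on $\mathbb{E}\|u_m\|^p_{\cH_p^\gamma(\tau_m^R\wedge T)}$ depends only on the mass and not on $R$, is the heart of the proof.
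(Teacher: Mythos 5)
Your first stage is essentially identical to the paper's: testing against $\psi_k$, Lemma \ref{Aux. function}, and a supermartingale maximal inequality (the paper invokes \cite[Theorem III.6.8]{krylov1995introduction}) yield a bound on the mass uniform in $m$, hence the tightness $\sup_m\bP(\sup_{t\leq T}\|u_m(t,\cdot)\|_{L_1}\geq S)\leq N_1 S^{-1/2}$. The gap is in your second stage. You localize only at the sup-norm hitting time $\tau_m^R$ and then propose to interpolate "against the uniformly bounded $L_1$-mass"; but your first stage does not give a pathwise bound on the mass, only a bound in probability. On $\opar 0,\tau_m^R\wedge T\cbrk$ the quantity $\|u_m(t,\cdot)\|_{L_1}$ is still a random, unbounded process, so the source estimate coming from Lemma \ref{4_key}, which at each time reads $\|\xi u_m^{1+\lambda}\boldsymbol{v}\|_{H_p^{\gamma-1}(l_2)}\leq N\|\xi u_m^{\lambda}\|_{L_{2s}}\|u_m\|_{L_p}$, carries a random coefficient that cannot be absorbed into the deterministic constant needed to run Theorem \ref{key}. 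You correctly flag this uniformity in $m$ and $R$ as "the heart of the proof," but you leave it unresolved, and with only $\tau_m^R$ in hand it cannot be resolved: restricting to the event $\{\sup_{t\leq T}\|u_m\|_{L_1}\leq S\}$ is not a stopping-time localization and is incompatible with the $\cH_p^\gamma$ machinery.

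The paper's device is to localize at the mass exit time $\tau_m(S):=\inf\{t\geq0:\|u_m(t,\cdot)\|_{L_1}\geq S\}$ instead; no sup-norm stopping time appears in this lemma at all. On $t\leq\tau_m(S)\wedge T$ one absorbs the superlinear factor into the coefficient, $\xi_m:=\xi|(-m)\vee u_m\wedge m|^{1+\lambda}/u_m$, and computes directly, with $s=\frac{1}{2\lambda}$ (resp. $s=\frac{1}{\lambda}$ in case (iii)), that $\|\xi_m(t,\cdot)\|_{L_{2s}}^{2s}\leq K^{2s}\int_{\bR^d}|u_m|^{2\lambda s}dx=K^{2s}\|u_m(t,\cdot)\|_{L_1}\leq K^{2s}S$. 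This single identity is where the mass stopping time converts tightness into a deterministic bound, where the admissible range of $\lambda$ enters (one needs $p>\frac{d+2}{\gamma}>\frac{2s}{s-1}$), and why Theorem \ref{case=0} was formulated for $\xi$ merely in $L_\sigma$ rather than bounded. Theorem \ref{case=0} with $h(u)=u$ and coefficient $\xi_m$ then gives $\|u_m\|^p_{\cH_p^{\gamma}(\tau_m(S)\wedge T)}\leq N_S\|u_0\|^p_{U_p^{\gamma}}$ with $N_S$ independent of $m$; Corollary \ref{s.t. holder} and Chebyshev bound $\bP(\sup_{t\leq\tau_m(S)\wedge T}\sup_x|u_m|\geq R)$ by $N_S R^{-1}\|u_0\|^p_{U_p^{\gamma}}$, and the splitting of $\bP(\sup_{t\leq T}\sup_x|u_m|\geq R)$ into this term plus $\bP(\tau_m(S)\leq T)$ finishes the proof after letting $R\to\infty$ and then $S\to\infty$. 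Your interpolation idea (note also that Lemma \ref{prop_of_bessel_space}(iii) does not cover the $L_1$ endpoint, though H\"older's inequality between $L_1$ and $L_p$ would) is morally this same computation, but it becomes a proof only once the $L_1$ exit time replaces, or at least supplements, your $\tau_m^R$.
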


\begin{proof} 
Let $K$ be the constant described in assumption \eqref{bounded''}.
By It\^o's formula, $\hat u_m:=u_me^{-4Kt}$ satisfies
\begin{equation*}
\begin{gathered}
d\hat u_m = \left(a^{ij}\hat u_{mx^ix^j}+b^i\hat u_{mx^i}+c\hat u_m-4K\hat u_m\right)dt+\sum_{k=1}^{\infty}\xi e^{-4Kt}|\hat u_m\wedge m|^{1+\lambda}v_kdw^k_t, \quad 0<t\leq\tau
\end{gathered}
\end{equation*}
with initial data $u_0$. Let $\psi_k$ be from Lemma \ref{Aux. function}.
By multiplying $\psi_k$ (see Remark \ref{schwartz}), using integration by parts, taking expectation, and applying inequality \eqref{negative}, we have the following: for any stopping time $\tau\leq T$,
\begin{equation*}
\begin{aligned}
&\bE[(u_m(\tau) e^{-4KT},\psi_k)] \leq \bE[(u_m(\tau)e^{-4K\tau},\psi_k)] \\
&\quad =\bE\left[(u_0,\psi_k) \right]+\bE\left[\int_0^{\tau}(u_m(t),(a^{ij}\psi_k)_{x^ix^j }-(b^i\psi_k)_{x^i}+(c-4K)\psi_k)e^{-4Kt}dt \right] \\
&\quad \leq \bE\left[(u_0,\psi_k) \right]\leq \bE[\|u_0\|_{L_1}]=\|u_0\|_{L_1(\Omega:L_1)}.
\end{aligned}
\end{equation*}
Then for any $\gamma\in(0,1)$ (e.g. \cite[Theorem III.6.8]{krylov1995introduction}),
\begin{equation*}
\bE\sup_{t\leq T}\left(\int_{\bR^d}u_m(t,x) \psi_k(x)dx\right)^{\gamma} \leq e^{4\gamma KT}\frac{2-\gamma}{1-\gamma}\|u_0\|_{L_1(\Omega:L_1)}^{\gamma}.
\end{equation*}
Since $u_m$ is nonnegative by Lemma \ref{Lem_5_1}, using the monotone convergence theorem as $k\to\infty$,
\begin{equation} \label{L_1 est.}
\bE\sup_{t\leq T}\|u_m(t,\cdot)\|_{L_1}^{1/2} \leq 3e^{2KT}\|u_0\|_{L_1(\Omega:L_1)}^{1/2}=:N_{1}.
\end{equation}
Thus, Chebyshev's inequality and inequality \eqref{L_1 est.} yield
\begin{equation*}
\bP\left(\sup_{t\leq T}\|u_m(t,\cdot)\|_{L_1}\geq S \right)\leq\frac{N_{1}}{\sqrt{S}},
\end{equation*}
where
$N_{1}$ is independent of $m$ and $S$. 
Fix $m$, $S>0$ and define
\begin{equation} \label{stopping_time_tau_m_s}
\tau_m(S):=\inf\{t\geq0:\|u_m(t,\cdot)\|_{L_1}\geq S\}.
\end{equation}
Note that
$\tau_m(S)$ is a stopping time by inequality \eqref{L_1 est.}. Besides, for any $T<\infty$, 
$u_m$ satisfies
\begin{equation}
\label{eq with xi_m}
du_m=\left( a^{ij}u_{mx^ix^j}+b^iu_{mx^i}+cu_m \right) \,dt+\sum_{k=1}^{\infty}\xi_m u_mv_kdw^k_t, \quad 0<t\leq\tau_m(S)\wedge T
\end{equation}
with initial data $u_0$, where $\xi_m$ is definded by \eqref{xi_m}.
Due to the definition of $\xi_m$, we can apply Theorem \ref{case=0} with equation \eqref{eq with xi_m}. Indeed, if either condition 
$(i)$ or $(ii)$ in Theorem \ref{case=0} holds, 
$$p>\frac{d+2}{\gamma}>\frac{2}{1-2\lambda}=\frac{2s}{s-1},$$ 
for $s:=\frac{1}{2\lambda}$. Thus for $t\leq \tau_m(S)$,
\begin{equation*} 
\|\xi_m(t,\cdot)\|_{L_{2s}}^{2s}\leq \int_{\bR^d}|\xi(t,x)|^{2s}|u_m(t,x)|^{2\lambda s}dx \leq K^{2s}S.
\end{equation*}
Hence, in each case, one can apply Theorem \ref{case=0} with $h(u) = u$ since $\xi_m$ satisfy Assumption \ref{assumption_for_xi} $(\tau,2s)$.

On the other hand, if condition (iii) holds, 
$$p>\frac{d+2}{\gamma}>\frac{2}{1-\lambda}=\frac{2s}{s-1},
$$
where $s:=\frac{1}{\lambda}.$ Then for $t\leq\tau_m(S)$,
\begin{equation*}
\|\xi_m(t,\cdot)\|_{L_{s}}^{s}\leq \int_{\bR^d}|\xi(t,x)|^{s}|u_m(t,x)|^{\lambda s}dx \leq K^{s}S.
\end{equation*}
Thus, applying Theorem \ref{case=0} with $h(u)=u$ and $\xi_m$ gives
\begin{equation}
\label{eqn.08.19.01}
\|u_m\|_{\cH_p^{\gamma}(\tau_m(S)\wedge T)}^p \leq N_{ S} \|u_0\|^p_{U_p^{\gamma}},
\end{equation}
where $N_{S}$ is independent of $m$. 
The estimate \eqref{eqn.08.19.01} and Corollary \ref{s.t. holder} lead to
\begin{equation*}
\bE\sup_{t\leq\tau_m(S)\wedge T}\sup_{x\in\bR^d}|u_m(t,x)|^p \leq N_S\|u_0\|^p_{U_p^{\gamma}}.
\end{equation*}
Therefore, by Chebyshev's inequality,
\begin{equation*}
\begin{aligned}
&\bP\left(\sup_{t\leq T}\sup_{x\in\bR^d}|u_m(t,x)|\geq R\right) \\
&\quad \leq  \bP\left(\sup_{t\leq\tau_m(S)\wedge T}\sup_{x\in\bR^d}|u_m(t,x)|\geq R\right)+\bP(\tau_m(S)\leq T) \\
&\quad \leq  \frac{N_S}{R}\|u_0\|^p_{U_p^{1-\gamma}}+\bP\left( \sup_{t\leq T}\|u_m(t,\cdot)\|_{L_1}\geq S \right) \\
&\quad \leq \frac{N_S}{R} \|u_0\|^p_{U_p^{1-\gamma}}+\frac{N_{ 1}}{\sqrt{S}}.
\end{aligned}
\end{equation*}
Observe that $N_{1}$ is independent of $m,R$ ans $S$. 
Taking supremum with respect to $m$ and letting $R\to\infty$ and $S\to\infty$ in order yield the final result \eqref{zero}. 
The lemma is proved.
\end{proof}

\begin{rem} \label{rmk:range_of_lambda}
In Theorem \ref{main}, each range of $\lambda$ is a sufficient condition for the unique solvability.
To obtain motivation for the condition, assume that the covariance $f$ is in $\cS'_{n} \cap \cS'_{nd}$ and $u$ is a solution to \eqref{targeteq:1} with smooth initial data $u_0$. 
Then for $t\leq \tau_m(s)$ (see \eqref{stopping_time_tau_m_s}), $u$ satisfies (at least formally)
\begin{equation} \label{motivation_of_range_of_lambda}
dv=(a^{ij}v_{x^ix^j}+b^iv_{x^i}+cv)dt+\xi(f\ast e_k)\frac{|u|^{1+\lambda}}{u}|v|dw_t^k, 
\quad (t,x)\in(0,\infty)\times\bR^d,
\end{equation}
with $u(0,\cdot) = u_0$ ($\frac{0}{0}:= 0$). To handle equation \eqref{motivation_of_range_of_lambda}, we consider Theorem \ref{case=0}. Since $\sup_{t\leq T}\| u(t,\cdot) \|_{L_1(\bR^d)}\leq S$ and the coefficient $\xi(f\ast e_k)\frac{|u|^{1+\lambda}}{u}$ have to satisfy Assumption \ref{assumption_for_xi} $(\tau_m(S),2s)$, we have
\begin{equation} \label{motivation_of_range_of_lambda_2}
\lambda = \frac{1}{2s}.
\end{equation}
Thus, we consider such $\lambda$ satisfying \eqref{motivation_of_range_of_lambda_2}. In the case of $f\in \cS'_{n} \cap C_{nd}$, equation \eqref{motivation_of_range_of_lambda_2} will be replaced with $\lambda  = \frac{1}{s}$.
\end{rem}

By employing Lemmas \ref{Lem_5_1} and \ref{Non_explosion}, we prove Theorem \ref{main}.
\begin{proof}[{\bf{Proof of Theorem \ref{main}}}]\
\label{pf:thm:main}
\\
{\it Step 1. Uniqueness. }
Suppose $u,v\in \cH_{p,loc}^{\gamma}$ are solutions of equation \eqref{targeteq:1}. By the definition of $\cH_{p,loc}^{\gamma}$, there are bounded stopping times $\tau_m$, $m=1,2,\cdots$ 
such that
\begin{equation*}
\tau_m\uparrow\infty\quad\mbox{and}\quad u,v\in \cH_{p,loc}^{\gamma}(\tau_m).
\end{equation*}
For fixed $m$, denote $\bar\tau := \tau_m$. 
By Corollary \ref{s.t. holder}, there exists small  $\ep>0$ such that
\begin{equation}
\label{uni.1}
\bE|u|^p_{C^{\ep}([0,\bar\tau]\times \bR^d )}+\bE|v|^p_{C^{\ep}([0,\bar\tau]\times \bR^d )}<\infty.
\end{equation}
For $n > 0$, define
\begin{equation*}
\begin{gathered}
\tau_n':=\inf\{t\leq\bar\tau:\sup_{x\in\bR^d}|u(t,x)|> n\}, \\
\tau_n'':=\inf\{t\leq\bar\tau:\sup_{x\in\bR^d}|v(t,x)|> n\},
\end{gathered}
\end{equation*}
and $\tilde{\tau}_n:=\tau_n'\wedge\tau_n''$.  
Due to \eqref{uni.1}, $\tau_n'$ and $\tau_n''$ are stopping times, and thus $\tilde{\tau}_n$ is a stopping time.
Observe that $\tilde{\tau}_n\uparrow \bar\tau$ (a.s.). 
Now, define $\hat\xi$ by
\begin{equation*}
\hat{\xi}:=\frac{\xi(t,x)|u|^{1+\lambda}}{u} \quad \left( \frac{0}{0}:=0\right).
\end{equation*}
Notice that
$$\sup\limits_{\omega\in\Omega}\sup\limits_{t\leq\tilde{\tau}_n}\|\hat{\xi}\|_{\infty}<\infty.
$$
Besides, $u$ is a solution to equation
\begin{equation*}
du=\left( a^{ij}u_{x^ix^j}+b^i u_{x^i}+cu \right)dt+\sum_{k=1}^{\infty}\hat{\xi}uv_kdw^k_t, \quad 0<t\leq\tilde{\tau}_n;\quad u(0,\cdot)=u_0.
\end{equation*}
By Theorem \ref{max_principle}, $u$ is nonnegative for all $t\leq\tilde{\tau}_n$ (a.s.). 
Similarly, $v$ is also nonnegative for all $t\leq\tilde{\tau}_n$ (a.s.).
Therefore, 
$$|(-n)\vee u \wedge n|=u \quad \text{and} \quad |(-n)\vee v \wedge n|=v,$$ 
for all $t\leq\tilde{\tau}_n$ (a.s.).
By the uniqueness result in Lemma \ref{Lem_5_1}, we conclude that $u=v$ in $\cH_p^{\gamma}(\tilde{\tau}_n)$ for each $n$. The monotone convergence theorem yields $u=v$ in $\cH_p^{\gamma}(\bar\tau)$.
\\

{\it{Step 2. Existence.}}  
By Lemma \ref{Lem_5_1}, for any $T<\infty$, there exists a nonnegative $u_m\in\cH_{p}^{\gamma}(T)$ such that 
\begin{equation}
du_m=\left( a^{ij}u_{mx^ix^j}+b^iu_{mx^i}+cu_m \right)\,dt+\sum_{k=1}^{\infty}\xi|u_m\wedge m|^{1+\lambda}v_kdw^k_t, \quad0<t\leq T
\end{equation}
and $u_m(0,\cdot)=u_0$. For nonnegative integers $m,R$ such that $m\geq R$, define
\begin{equation}
\label{stopping_time}
\tau_m^{R}:=\inf\{t>0:\sup_{x\in\bR^d}|u_m(t,x)|\geq R\}.
\end{equation}
Since $\sup\limits_{x\in\bR^d}|u_m(t,x)|\leq R$ for $t\leq \tau_m^R$,  we have $u_m\wedge m=u_m\wedge m\wedge R  = u_m \wedge R$ for $t\leq\tau_m^R$.  Thus, both $u_m$ and $u_R$ satisfy
\begin{equation*}
d\hat u=\left( a^{ij}\hat u_{x^ix^j}+b^i\hat u_{x^i}+c\hat u \right)\,dt+\sum_{k=1}^{\infty}\xi|\hat u\wedge R|^{1+\lambda}v_kdw^k_t, \quad0<t\leq\tau_m^R\wedge T; \quad \hat u(0,\cdot)=u_0.
\end{equation*}
 On the other hand, since $R\leq m$, $u_R\wedge R = u_R\wedge R\wedge m = u_R\wedge m$ for $t\leq \tau_R^R$. Then $u_m$ and $u_R$ satisfy
\begin{equation*}
d\hat u=\left( a^{ij}\hat u_{x^ix^j}+b^i\hat u_{x^i}+c\hat u \right)\,dt+\sum_{k=1}^{\infty}\xi|\hat u\wedge m|^{1+\lambda}v_kdw^k_t, \quad0<t\leq\tau_R^R\wedge T; \quad \hat u(0,\cdot)=u_0.
\end{equation*}
 By the uniqueness result in Lemma \ref{Lem_5_1}, $u_m=u_R$ in $\cH_p^{\gamma}((\tau_m^R\vee\tau_R^R)\wedge T)$ for any positive integer $T$.
 Therefore, we can conclude that $\tau_R^R=\tau_m^R\leq\tau_m^m$ (a.s.). Indeed, for $t<\tau_m^R$, 
\begin{equation*}
 \sup_{s\leq t}\sup_{x\in\bR^d}|u_R(s,x)|=\sup_{s\leq t}\sup_{x\in\bR^d}|u_m(s,x)|\leq R,
\end{equation*}
which implies  $\tau_m^R\leq\tau_R^R$. Similarly, we can obtain  $\tau_m^R\geq\tau_R^R$.  Now, observe that by Lemma \ref{Non_explosion}, 
\begin{equation*}
\begin{aligned}
\limsup_{m\to\infty}\bP(\tau_m^m\leq T)&=\limsup_{m\to\infty}\bP\left(\sup_{t\leq T,x\in\bR^d}|u_m(t,x)|\geq m\right) \\
  &\leq \limsup_{m\to\infty}\sup_{n}\bP\left(\sup_{t\leq T,x\in\bR^d}|u_n(t,x)|\geq m\right)\to 0,
\end{aligned}
\end{equation*}
which implies $\tau_m^m\to\infty$ in probability. Since $\tau_m^m$ is increasing,  we conclude that $\tau_m^m\uparrow\infty$ (a.s.).

Lastly, define
$\tau_m:=\tau_m^m\wedge m$ and
\begin{equation*}
u(t,x):=u_m(t,x)\quad \text{for}~ t\in[0,\tau_m).
\end{equation*}
Observe that $|u(t)|\leq m$ for $t\leq\tau_m$ and $u$ satisfies \eqref{targeteq:1} for all $t<\infty$.
Since $u=u_m$ for $t\leq\tau_m$ and  $u_m\in \cH_p^{\gamma}(\tau_m)$, it follows that $u\in\cH_p^{\gamma}(\tau_m)$ for any $m$,
and thus 
$u\in\cH_{p,loc}^{\gamma}$. The theorem is proved.

\end{proof}

\begin{proof}[{\bf{Proof of Theorem \ref{p_consistent}}}]\ \\
By the assumption, $u\in\cH_{p,loc}^\gamma$ is a solution to equation \eqref{targeteq}. By Definition \ref{def_of_sol_1} and Corollary \ref{s.t. holder}, there exists $\tau_n$ such that
\begin{equation}
\label{eqn.08.14.02}
\sup_{t\leq\tau_n}\sup_{x\in\bR^d}|u(t,x)|\leq n \quad \mbox{(a.s.)}.
\end{equation}
Thus, we have

\begin{equation*}
\begin{aligned}
\int_0^{\tau_n}\int_{\bR^d}|u(t,x)|^qdxdt &= \int_0^{\tau_n}\int_{\bR^d}|u(t,x)|^p|u(t,x)|^{q-p}dxdt \\
& \leq \left(\sup_{t\leq\tau_n} \sup_{x\in\bR^d}|u(t,x)|\right)^{q-p}\int_0^{\tau_n}\|u(t,\cdot)\|^p_{L_p}dt \\
& \leq  n^{q-p}\int_0^{\tau_n}\|u(t,\cdot)\|^p_{H_p^{\gamma}}dt<\infty \quad \mbox{(a.s.)}.
\end{aligned}
\end{equation*}
Therefore, we can define a bounded stopping time
\begin{equation*}
\tau_{n,k}:=\tau_n\wedge\inf\left\{t>0:\int_0^{t}\|u(t,\cdot)\|^q_{L_q}dt >k\right\}
\end{equation*}
such that $\tau_{n,k}\uparrow \tau_n$ as $ k\to\infty$ and $u \in\bL_{q}(\tau_{n,k})$ for each $k\in\bN$.

Note that Lemma \ref{4_key} and \eqref{eqn.08.14.02} imply
\begin{equation}
\label{regularity_up}
\|\xi|u|^{1+\lambda}\boldsymbol{v}\|_{\bH^{-1+\varepsilon}_q(\tau_{n,k},l_2)}\leq N\|u\|_{\bL_{q(1+\lambda)}(\tau_{n,k})}\leq N n^{\lambda}\|u\|_{\bL_{q}(\tau_{n,k})}
\end{equation}
for all $\varepsilon\in[0,1)$. 
Since $u_0\in U_q^0$, $a^{ij}u_{x^ix^j}+b^iu_{x^i}+cu\in\bH_q^{-2}(\tau_{n,k},l_2)$, and \eqref{regularity_up} implies $\xi|u|^{1+\lambda}\boldsymbol{v}\in \bH_q^{-1}(\tau_{n,k},l_2)$, we have $u\in\cH_q^0(\tau_{n,k})$. On the other hand, since \eqref{regularity_up} also yields $\xi|u|^{1+\lambda}\boldsymbol{v}\in \bH_q^{\gamma-1}(\tau_{n,k},l_2)$, by Theorem \ref{case=0}, there exists a unique solution $\hat u\in\cH^{\gamma}_q(\tau_{n,k})$ to equation
\begin{equation*}
d\hat u=\left(a^{ij}\hat u_{x^ix^j} + b^i \hat u_{x^i} + c \hat u\right)\,dt+\sum_{k=1}^{\infty}\xi|u|^{1+\lambda}v_kdw^k_t, \quad0<t\leq\tau_{n,k}; \quad \hat u(0,\cdot)=u_0.
\end{equation*}
It should be remarked that the stochastic part of the above equation is $\xi|u|^{1+\lambda}v_k$, not $\xi|\hat u|^{1+\lambda}v_k$.  Since $\varphi := u-\hat u\in \cH_q^0(\tau_{n,k})$ satisfies the equation
\begin{equation*}
 d\varphi=\left( a^{ij}\varphi_{x^ix^j} + b^i \varphi_{x^i} + c \varphi \right)\,dt, \quad0<t\leq\tau_{n,k}; \quad \varphi(0,\cdot)=0,
\end{equation*}
by Theorem \ref{key}, $u=\hat u$ almost all $(\omega,t,x)$. Therefore, $u\in\cH^{\gamma}_q(\tau_{n,k})$. 
The theorem is proved.

\end{proof}

\appendix
\mysection{Proof of maximum principle (Theorem \ref{max_principle})} \label{appendix}

Without loss of generality, we may assume $\tau = T$ (for example, consider $\xi1_{t\leq \tau}$ in place of $\xi$). Observe that there exists a sequence of nonnegative functions $u_0^n\in U_p^1$ such that
$u_0^n(\omega,\cdot)\in C_c^{\infty}$ for each $\omega\in\Omega$
and $u_0^n \rightarrow u_0$ in $U_p^{\gamma}$.
For $m=1,2,\cdots$, set
\begin{equation*}
\begin{gathered}
\boldsymbol{v}=(v_1,v_2,\cdots), \quad \boldsymbol{v}_m=(v_1,v_2\cdots,v_m,0,0,\cdots), \\
g(u)=\xi u \boldsymbol{v}=(\xi u v_1,\xi u v_2,\cdots), \\
g_m(u)=\xi u \boldsymbol{v}_m=(\xi u v_1,\xi u v_2\cdots,\xi u v_m,0,0,\cdots).
\end{gathered}
\end{equation*}
Obviously,
\begin{equation}
\label{finite_sum}
\|g_m(u)-g_m(v)\|_{H_p^{\gamma-1}(l_2)}\leq\|g(u)-g(v)\|_{H_p^{\gamma-1}(l_2)}.
\end{equation}
From the proof of Theorem \ref{case=0}, one can check that $g_m$ satisfies Assumption \ref{ass g}$(T)$. Therefore, there exists a unique solution $u_m\in \cH_p^{\gamma}(T)$ to  equation
\begin{equation*}
du_m=\left( a^{ij}u_{mx^ix^j}+b^iu_{mx^i}+cu_m \right)\,dt+\sum_{k=1}^{\infty}g_m^k(u_m)dw^k_t, \quad 0<t\leq T; \quad u_m(0,\cdot)=u_0^m,
\end{equation*}

Now, we prove that $u_m\rightarrow u$ in $\cH_p^{\gamma}(T)$ and $u_m$ is nonnegative.

First, we show that $u_m\rightarrow u$ in $\cH_p^{\gamma}(T)$. Note that $\varphi_m:=u-u_m$ satisfies $\varphi_m(0,\cdot)=u_0-u_0^m$,
\begin{equation*}
\begin{aligned}
d\varphi_m=\left( a^{ij}\varphi_{mx^ix^j}+b^i\varphi_{mx^i}+c\varphi_m \right) \,dt+\sum_{k=1}^{\infty}(g^k(u)-g_m^k(u_m))dw^k_t, \quad 0<t\leq T,
\end{aligned}
\end{equation*}
and
\begin{equation*}
g(u)-g_m(u_m)=g_m(u)-g_m(u_m)+\xi u (\boldsymbol{v}-\boldsymbol{v}_m).
\end{equation*}
By Theorem \ref{key} with $\tau=t$, \eqref{0_goal} and \eqref{finite_sum}, 
for any $t\leq T$,
\begin{equation*}
\begin{aligned}
&\|u-u_m\|^p_{\cH_p^{\gamma}(t)}  \leq N \|u_0-u^m_0\|^p_{U_p^{\gamma}}+N\|g_m(u)-g_m(u_m)\|^p_{\bH_p^{\gamma-1}(t,l_2)} 
+ N\|\xi u (\boldsymbol{v}-\boldsymbol{v}_m)\|^p_{\bH_p^{\gamma-1}(t,l_2)} 
\\&
\leq N \|u_0-u^m_0\|^p_{U_p^{\gamma}} + \frac{1}{2}\|u-u_m\|^p_{\bH_p^{\gamma}(t)}+N\|u-u_m\|^p_{\bH_p^{\gamma-1}(t)} 
+ N\|\xi u (\boldsymbol{v}-\boldsymbol{v}_m)\|^p_{\bH_p^{\gamma-1}(t,l_2)}.
\end{aligned}
\end{equation*}
Therefore,
\begin{equation*}
\|u-u_m\|^p_{\cH_p^{\gamma}(t)} \leq N \|u_0-u^m_0\|^p_{U_p^{\gamma}}+N\|u-u_m\|^p_{\bH_p^{\gamma-1}(t)}+N\|\xi u (\boldsymbol{v}-\boldsymbol{v}_m)\|^p_{\bH_p^{\gamma-1}(T,l_2)},
\end{equation*}
for any $t\leq T$.
By Gronwall's inequality and Theorem \ref{embedding} \eqref{gronwall-type},
\begin{equation}
\label{max_est}
\|u-u_m\|^p_{\cH_p^{\gamma}(T)} \leq N \|u_0-u^m_0\|^p_{U_p^{\gamma}}+N\|\xi u (\boldsymbol{v}-\boldsymbol{v}_m)\|^p_{\bH_p^{\gamma-1}(T,l_2)},
\end{equation}
where $N$ is independent of $m$ and $u$. Since $\|u_0-u^m_0\|^p_{U_p^{\gamma}}\rightarrow 0$ as $m\to\infty$, it is enough to show that $\|\xi u (\boldsymbol{v}-\boldsymbol{v}_m)\|^p_{\bH_p^{\gamma-1}(T,l_2)}$ goes to zero as $m\to\infty$. Note that
\begin{equation}
\label{eqn.08.13.01}
|(1-\Delta)^{-\frac{(1-\gamma)}{2}}(\xi u (\boldsymbol{v}-\boldsymbol{v}_m))(t,x)|^2_{l_2}=\sum_{k=m+1}^{\infty}\langle R_{1-\gamma}(x-\cdot)(\xi u)(t),e_k\rangle_{\cH}^2.
\end{equation}
 By the proof of Lemma \ref{4_key}, we have
\begin{equation*}
\|\xi(t) u(t) (\boldsymbol{v}-\boldsymbol{v}_m)\|_{H_p^{\gamma-1}(l_2)} \leq N\|u(t)\|_{H_p^{\gamma}},
\end{equation*}
where $N$ is independent of $m$.

Therefore, by  the dominated convergence theorem, 
\begin{equation*}
\|\xi(t) u(t) (\boldsymbol{v}-\boldsymbol{v}_m)\|_{H_p^{\gamma-1}(l_2)} \to 0
\end{equation*}
as $m\to\infty$. Again, by  the dominated convergence theorem,
\begin{equation*}
\|\xi u (\boldsymbol{v}-\boldsymbol{v}_m)\|_{\bH_p^{\gamma-1}(T,l_2)}  \to 0\quad \mbox{as}\quad m\to\infty.
\end{equation*}


Next, we prove that $u_m$ is  nonnegative. Observe that $u_m\in\bH_p^{\gamma}(T)\subseteq \bL_p(T)$. Since  $\xi $ and $ v_k$ are bounded, we conclude that
\begin{equation*}
g_m(u_m):=\xi u_m \boldsymbol{v}_m=(\xi u_m v_1,\xi u_m v_2\cdots,\xi u_m v_m,0,0,\cdots) \in \bL_p(T,l_2).
\end{equation*}
Since $u^m_0\in U_p^1 $, by Theorem \ref{key}, there exists a unique solution $\hat{u}_m\in\cH_p^1(T)$ to
\begin{equation}
\label{eqn.08.19.02}
d\hat{u}_m=\left(a^{ij}\hat u_{mx^ix^j}+b^i\hat u_{mx^i}+c\hat u_m \right) \,dt+\sum_{k=1}^{m}g_m^k(u_m)dw^k_t, \quad 0<t\leq T; \quad \hat{u}_m(0,\cdot)=u_0^m.
\end{equation}
It should be noted that the stochastic part of \eqref{eqn.08.19.02} is $g_m^k(u_m)$, not $g_m^k(\hat u_m)$.
Observe that $u_m$ and $\hat{u}_m$ are solutions to equation \eqref{eqn.08.19.02} in $\cH_p^{\gamma}(T)$.  By the uniqueness  in $\cH_p^{\gamma}(T)$, we have $u_m = \hat{u}_m$ in $\cH_p^{\gamma}(T)$. Thus, $u_m$ is in $\cH_p^{1}(T)$ and $u_m$ satisfies
\begin{equation*}
du_m = \left( a^{ij}u_{mx^ix^j}+b^iu_{mx^i}+cu_m \right)dt + \sum_{k\leq m}\xi  u_m v_k dw_t^k,\quad 0< t\leq T;\quad u_m(0,\cdot) = u_0^m.
\end{equation*}
Since $\xi v_k$ is bounded for each $k=1,2,\cdots,$ by maximum principle (e.g. \cite[Theorem 1.1]{krylov2007maximum}), we conclude that $u_m(t)$ is nonnegative for all $t\leq T$ almost surely. The theorem is proved.
\qed

\begin{acknowledgment}
The authors are sincerely grateful to the anonymous referees and Professor Kyeong-Hun Kim for giving many useful comments and suggestions. The authors also would like to thank Hee-Sun Choi for giving many helpful comments.
\end{acknowledgment}


\begin{thebibliography}{10}

\bibitem{arnold1981mathematical}
Arnold, L.,
\newblock {\em Mathematical models of chemical reactions},
\newblock {Stochastic Systems: The Mathematics of Filtering and Identification and Applications}, Springer, 1981, pages 111--134.

\bibitem{burdzy2010non}
Burdzy, K., Mueller, C., Perkins, E. A., 
\newblock{\em Nonuniqueness for nonnegative solutions of parabolic stochastic partial differential equations},
\newblock {Illinois Journal of Mathematics}, volume 54, number 4,  (2010), pages 1481--1507.



\bibitem{dalang1998stochastic}
Dalang, R. C.  and Frangos, N. E.,
\newblock {\em The stochastic wave equation in two spatial dimensions},
\newblock {Annals of Probability}, volume 26, number 1, (1998), pages 187--212.

\bibitem{dalang1999extending}
Dalang, R. C.,
\newblock {\em Extending the martingale measure stochastic integral with
  applications to spatially homogeneous spde's},
\newblock {Electronic Journal of Probability}, volume 4, (1999).

\bibitem{dawson1972stochastic}
Dawson, D.A.,
\newblock {\em Stochastic evolution equations},
\newblock {Mathmatical Biosciences}, volume 15, issues 3-4, (1972), pages 287--316.

\bibitem{dawson1980spatially}
Dawson, D. A. and Salehi, H.,
\newblock {\em Spatially homogeneous random evolutions},
\newblock {Journal of Multivariate Analysis}, volume 10, issue 2, (1980), pages 141--180.

\bibitem{ferrante2006spdes}
Ferrante, M.  and Sanz-Sol{\'e}, M.,
\newblock {\em SPDEs with coloured noise: analytic and stochastic approaches},
\newblock {ESAIM: Probability and Statistics}, volume 10, (2006), pages 380--405.




\bibitem{gel2014generalized}
Gel'fand, I. M.  and Vilenkin, N. Y.,
\newblock {\em Generalized functions volume 4 : Applications of harmonic analysis},
\newblock Academic press, 1964.

\bibitem{Kijung}
Gomez, A.,  Lee, K.,   Mueller, C.,   Wei, A. and   Xiong, J.,
\newblock {\em Strong uniqueness for an SPDE via backward doubly stochastic differential equations},
\newblock {Statistics \& Probability Letters}, volume 83, issue 10, (2013), pages 2186--2190.

\bibitem{grafakos2009modern}
Grafakos, L.,
\newblock {\em Modern fourier analysis}, volume 250.
\newblock Springer, 2009.

\bibitem{han2020boundary}
Han, B. and Kim, K.-H.,
\newblock {\em Boundary behavior and interior H{\"o}lder regularity of the solution
  to nonlinear stochastic partial differential equation driven by space-time
  white noise.}
\newblock {Journal of Differential Equations}, 269(11):9904--9935, 2020.

\bibitem{henry2006geometric}
Henry, D.,
\newblock{\em Geometric theory of semilinear parabolic equations\/}, volume 840,
\newblock Springer, 2006.

\bibitem{krein2002interpolation}
Krein, S. G.  and Semenov. E. M.,
\newblock {\em Interpolation of linear operators}, volume 54,
\newblock American Mathematical Soc., 2002.

\bibitem{krylov1995introduction}
Krylov, N. V.,
\newblock {\em Introduction to the theory of diffusion processes}, volume 142,
\newblock American Mathematical Soc. Tansl. Math. Monogr., Providence, RI, 1995.
  
\bibitem{kry99analytic}
Krylov, N. V.,
\newblock {\em An analytic approach to spdes},
\newblock {Stochastic partial differential equations: six perspectives}, volume 64, pages185--242, 1999.
  
\bibitem{krylov2007maximum}
Krylov, N. V.,
\newblock {\em Maximum principle for spdes and its applications},
\newblock {Stochastic Differential Equations: Theory And Applications: A
  Volume in Honor of Professor Boris L Rozovskii}, World Scientific, 2007, pages 311--338.
  
\bibitem{krylov2008lectures}
Krylov N. V.,
\newblock {\em Lectures on elliptic and parabolic equations in Sobolev spaces}, volume 96,
\newblock American Mathematical Soc., 2008.
  
\bibitem{ladyvzenskaja1988linear}
Lady{\v{z}}enskaja, O. A. , Solonnikov, V. A. and Ural'ceva, N. N.,
\newblock {\em Linear and quasi-linear equations of parabolic type}, volume 23,
\newblock American Mathematical Soc., 1988.

\bibitem{major2014multiple}
Major, P.,
\newblock {\em Multiple Wiener-It{\^o} integrals : With Applications to Limit Theorems, Second edition},
\newblock Springer, 2014.

\bibitem{mckean1970nagumo} 
McKean, H. P. Jr,
\newblock {\em Nagumo's equation}, 
\newblock {Advances in Mathematics}, volume 4, issue 3, (1970), pages 209--223.

\bibitem{mueller1991long}
Mueller, C.,
\newblock {\em Long time existence for the heat equation with a noise term},
\newblock {Probability theory and related fields}, volume 90, issue 4, (1991), pages 505--517.

\bibitem{mueller1999critical}
Mueller, C.,  
\newblock{\em The critical parameter for the heat equation with a noise term to blow up in finite time},
\newblock {Annals of Probability}, volume 28, number 4, (2000), pages 1735--1746.

\bibitem{mueller2009some}
Mueller, C.,
\newblock{\em Some tools and results for parabolic stochastic partial differential equations},
\newblock {A minicourse on stochastic partial differential equations}, Springer, 2009, pages 111--144.


\bibitem{mueller2014nonuniqueness}
Mueller, C., Mytnik, L., Perkins, E. A.,
\newblock{\em  Nonuniqueness for a parabolic SPDE with $\frac{3}{4}-\varepsilon $-H\"older diffusion coefficients},
\newblock {Annals of Probability}, volume 42, number 5, (2014), pages 2032--2112.

\bibitem{mytnik2011pathwise}
Mytnik, L.,  Perkins, E. A.,
\newblock{\em  Pathwise uniqueness for stochastic heat equations
with H\"older continuous coefficients: The white noise case}, 
\newblock {Probability Theory Related Fields},volume 149, issues 1-2, (2011), pages 1--96.

\bibitem{skorokhod1982studies}
Skorokhod, A. V. ,
\newblock {\em Studies in the theory of random processes}, volume 7021,
\newblock Courier Dover Publications, 1982.

\bibitem{walsh1986introduction}
Walsh, J. B.
\newblock {\em An introduction to stochastic partial differential equations},
\newblock {{\'E}cole d'{\'E}t{\'e} de Probabilit{\'e}s de Saint Flour
  XIV-1984}, Springer, 1986, pages 265--439.

\bibitem{Xiong}
Xiong, J.,
\newblock{\em Super-Brownian motion as the unique strong solution to an SPDE},
\newblock {Annals of Probability}, volume 41, number 2, (2013), pages 1030--1054.

\bibitem{yamada1971uniqueness}
Yamada, T., Watanabe, S.,
\newblock {\em On the uniqueness of solutions of stochastic differential equations},
\newblock Journal of Mathematics of Kyoto University, volume 11, number 1, (1971), pages 155--167.


\end{thebibliography}
\end{document}